\newcommand{\R}{\mathbb{R}}
\newcommand{\C}{\mathbb{C}}
\newcommand{\g}{\mathfrak{g}}
\renewcommand{\a}{\mathfrak{a}}
\newcommand{\su}{\mathfrak{su}}
\newcommand{\p}{\mathfrak{p}}
\renewcommand{\b}{\mathfrak{b}}
\newcommand{\so}{\mathfrak{so}}
\renewcommand{\sp}{\mathfrak{sp}}
\renewcommand{\H}{\mathbb{H}}
\renewcommand{\sl}{\mathfrak{sl}}
\newcommand{\h}{\mathfrak{h}}
\renewcommand{\O}{\mathrm{O}}
\newcommand{\tr}{\mathrm{tr}}
\newcommand{\ad}{\mathrm{ad}}
\renewcommand{\c}{\mathfrak{c}}
\renewcommand{\k}{\mathfrak{k}}
\renewcommand{\p}{\mathfrak{p}}
\newcommand{\I}{\hat{I}}
\newcommand{\z}{\mathfrak{z}}
\newcommand{\s}{\mathfrak{s}}
\newcommand{\im}{\mathrm{im}}
\newtheorem{theorem}{Theorem}[section]
\newtheorem{proposition}[theorem]{Proposition}
\newtheorem{lemma}[theorem]{Lemma}
\newtheorem{corollary}[theorem]{Corollary}
\newtheorem{conjecture}[theorem]{Conjecture}
\newtheorem{example}[theorem]{Example}
\newtheorem{remark}[theorem]{Remark}
\newtheorem*{MainTheorem}{Theorem \ref{thm::MainTheorem}}
\title{FAILURE OF LICHNEROWICZ-TYPE RESULT IN PARABOLIC GEOMETRIES OF REAL RANK AT LEAST $3$}
\author{TOBY ALDAPE}
\date{September 2025}
\begin{document}

\maketitle
\begin{abstract}
   Given a Yamaguchi nonrigid parabolic model geometry $(G,P)$ with $G$ simple of real rank at least $3$, we use techniques developed by Erickson to establish the existence of closed, nonflat, essential, regular, normal Cartan geometries modeled on $(G,P)$. Yamaguchi nonrigidity is a necessary condition for admitting nonflat, regular, normal examples. This rules out Lichnerowicz-type conjectures for these model geometries.
\end{abstract}
\section{Introduction}
A conformal manifold $(M,c)$ is called \emph{essential} if, for any metric $g$ in the conformal class $c$, $\mathrm{Isom}(g)\subsetneq \mathrm{Conf}(c)$. That is, the full conformal automorphism group is larger than the isometry group of any representative metric. As conjectured by Lichnerowicz and proven independently by Lelong-Ferrand \cite{Lelong-Ferrand} and Obata \cite{Obata}, there are remarkably few essential conformal manifolds. 
\begin{theorem}[Ferrand-Obata]\label{Ferrand-Obata}
Let $(M,[g])$ be a connected, essential, Riemannian conformal manifold. Then $M$ is conformally diffeomorphic to either the round sphere or Euclidean space.
\end{theorem}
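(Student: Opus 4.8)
The plan is to prove the theorem through the equivalence between conformal structures and their canonical normal Cartan geometries of type $(G,P)$, where $G=\mathrm{PO}(n+1,1)$, $P\subset G$ is the stabilizer of an isotropic line, $G/P\cong \mathbb{S}^n$ is the conformal sphere, and $n=\dim M$. Let $(\mathcal{G}\to M,\omega)$ denote the canonical Cartan bundle of $(M,[g])$; the group $H:=\mathrm{Conf}(M,[g])$ lifts to automorphisms of $(\mathcal{G},\omega)$, and because $\omega$ trivializes $T\mathcal{G}$ this automorphism group is a Lie group acting freely and properly on $\mathcal{G}$. The first reduction is the standard observation (going back to work of Alekseevskii and of Ferrand) that $(M,[g])$ is essential precisely when $H$ does \emph{not} act properly on $M$: if the action were proper one could average the conformal-distortion cocycle $h\mapsto \tfrac12\log(h^*g_0/g_0)$ against a proper partition of unity to produce $\mu\in C^\infty(M)$ with $H\subseteq\mathrm{Isom}(e^{2\mu}g_0)$, contradicting essentiality. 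I would carry out this averaging carefully, but it is routine once properness is available. It thus suffices to prove: if $H$ does not act properly on $M$, then $(M,[g])$ is conformally diffeomorphic to $\mathbb{S}^n$ or to $\mathbb{R}^n$.

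Next I would extract the dynamics. Non-properness yields sequences $h_k\to\infty$ in $H$, $x_k\to x$ in $M$ and $h_kx_k\to y$ in $M$. Lifting $x_k$ to $\hat x_k\to\hat x$ in $\mathcal{G}$ and letting $\tilde h_k$ be the lifted automorphisms, we may write $\tilde h_k\hat x_k=\hat y_k\,a_k$ with $\hat y_k\to\hat y$ over $y$ and $a_k\in P$; since the $\mathrm{Aut}$-action on $\mathcal{G}$ is proper and $h_k\to\infty$, necessarily $a_k\to\infty$ in $P$, a \emph{holonomy sequence} at $x$. The curvature function $\kappa\colon\mathcal{G}\to\Lambda^2(\g/\p)^*\otimes\g$ is $P$-equivariant and $\mathrm{Aut}$-invariant, so $a_k\cdot\kappa(\hat x_k)=\kappa(\hat y_k)\to\kappa(\hat y)$; as $\kappa(\hat x_k)\to\kappa(\hat x)$ and $a_k\to\infty$, comparing growth in the $P$-module forces the harmonic part $\kappa_H$ (the Weyl tensor for $n\ge 4$, the Cotton--York tensor for $n=3$, living in an irreducible $P$-submodule on which the grading element acts with a nonzero weight) to vanish at $\hat x$ whenever the holonomy sequence is, up to bounded error, a strongly contracting one-parameter subgroup of $P$. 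The heart of the argument — and the step I expect to be the main obstacle — is exactly to show that the holonomy sequence can always be so normalized, to deduce that $(h_k)$ then exhibits a north--south ``convergence'' dynamics ($h_k\to y$ locally uniformly on $M\setminus\{p\}$ for some $p$, and $h_k^{-1}\to p$ on $M\setminus\{y\}$), and to propagate the curvature vanishing along the contracted directions so that it holds on an open set which, by the dynamics, exhausts $M$. The conclusion of this step is that $(M,[g])$ is conformally flat, i.e.\ carries a $(G,P)$-structure.

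Finally, with conformal flatness in hand I would run a Kuiper--Liouville-type argument. Flatness gives a developing map $\delta\colon\widetilde M\to\mathbb{S}^n$ and holonomy $\rho\colon\pi_1(M)\to G$; the image $\Omega:=\delta(\widetilde M)$ is open and invariant under $\Gamma:=\rho(\pi_1 M)$ together with the deck-equivariant lifts of the non-proper sequence $(h_k)$, which descend to a sequence in $G\cong\mathrm{Conf}(\mathbb{S}^n)$ preserving $\Omega$, going to infinity, and carrying the convergence dynamics above. Such a sequence contracts $\mathbb{S}^n$ minus one point toward a single point of $\overline\Omega$, so $\Omega$ must be all of $\mathbb{S}^n$ or $\mathbb{S}^n$ minus a point, and $\delta$ must be injective; hence $\widetilde M\cong\mathbb{S}^n$ or $\widetilde M\cong\mathbb{R}^n$. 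In either case the remaining deck group is forced to be trivial, since the normalizer in $G$ of a nontrivial finite subgroup acting freely on $\mathbb{S}^n$ (respectively, of a nontrivial discrete group of Euclidean similarities acting freely and properly discontinuously on $\mathbb{R}^n$) is too small to act non-properly on the quotient, contradicting essentiality. Therefore $M\cong\mathbb{S}^n$ or $M\cong\mathbb{R}^n$, the latter being $\mathbb{S}^n$ minus a point as a conformal manifold. I would remark that in the compact case the conclusion also follows from Obata's original Bochner-type argument \cite{Obata}, and that Lelong-Ferrand's proof \cite{Lelong-Ferrand} runs an analogous dynamical and quasiconformal analysis directly on $M$ without the Cartan-geometric packaging.
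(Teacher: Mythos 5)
The paper does not prove this theorem: it is quoted as a classical result of Lelong-Ferrand and Obata (with Frances's later Cartan-geometric generalization to all real rank one parabolic geometries cited as Theorem \ref{FrancesAlt}), so there is no in-paper argument to compare against. Judged on its own, your proposal is a reasonable outline of the modern dynamical proof in the style of Frances \cite{Frances1}, but as written it has a genuine gap that you yourself flag: the entire content of the theorem is concentrated in the step you defer. Producing a holonomy sequence $a_k\to\infty$ in $P$ from non-properness is easy; what is hard is (i) normalizing $a_k$ so that its divergence is captured by a contracting one-parameter subgroup rather than, say, a sequence diverging purely in the unipotent radical $P_+$ or twisted by the compact part of $G_0$, (ii) showing that the resulting equivariance identity kills the \emph{full} harmonic curvature (the Weyl or Cotton--York tensor) rather than only the components on which the relevant weight is strictly positive in the contracted direction, and (iii) propagating the pointwise vanishing at $x$ to an open set and then, via the north--south dynamics, to all of $M$. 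None of these is routine: step (i) requires a Cartan-type decomposition of the holonomy sequence and a stability analysis, and step (iii) requires knowing the dynamics on an open \emph{stable} subset before flatness is established, which is circular unless handled carefully. Without this, "the conclusion of this step is that $(M,[g])$ is conformally flat'' is an assertion, not a deduction.

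Two smaller points. First, the final reduction of the deck group is also more than a remark: you need that for a nontrivial spherical space form $S^n/\Gamma$ (respectively a nontrivial quotient of $\R^n$ by similarities) the conformal group acts properly, which amounts to a compactness statement about the normalizer of $\Gamma$ in $\mathrm{Conf}(S^n)$ modulo $\Gamma$; this is true but deserves proof. Second, your opening equivalence (essential iff non-proper) is correct and the averaging argument is standard, but note that it uses the Riemannian signature in an essential way (positivity of the conformal factor and existence of an invariant metric by averaging); this is precisely the step that has no analogue in higher signature, which is why the present paper's Theorem \ref{thm::MainTheorem} can produce essential examples in real rank at least $3$. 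If you want a complete proof at the level of detail the statement demands, you should either carry out Frances's normalization of holonomy sequences in the rank one case, or fall back on Lelong-Ferrand's quasiconformal argument or Obata's Bochner-type argument in the compact case, both of which the paper cites.
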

Riemannian conformal manifolds are the regular, normal, parabolic Cartan geometries modeled on the conformal sphere. This is the homogeneous space for the action of the orthogonal group $\mathrm{O}(n)$ by M{\"o}bius transformations. This model geometry has stabilizer a parabolic subgroup of $O(n).$ 

CR manifolds satisfy an analogous result to Ferrand-Obata. We will call such results \emph{Lichnerowicz-type}. A pseudoconvex CR manifold is a $2n+1$ dimensional manifold $M$ with a $2n$ dimensional co-oriented distribution $H\subset TM$ admitting an almost complex structure $J:H\to H$ for which $J^2=-1$ and satisfying certain positive definiteness and integrability conditions. The subbundle $H$ induces an equivalence class of contact forms $[\theta]$ vanishing on $H.$ These satisfy the property that $d\theta: H\times H\to \R$ is the imaginary part of a positive definite Hermitian form. Call a CR manifold $(M,H,J)$ essential if $\mathrm{Aut}(M,\theta, J)\subsetneq \mathrm{Aut}(M,H,J)$ for all contact forms $\theta$ in the equivalence class. Then results of Webster \cite{Webster} and Schoen \cite{Schoen} each imply the following.
\begin{theorem}[Schoen-Webster]\label{Webster-Schoen}
If a $2n+1$-dimensional compact CR manifold is essential then it is CR diffeomorphic to $S^{2n+1}$ with its standard CR structure.
\end{theorem}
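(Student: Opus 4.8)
The plan is to transport the statement into parabolic geometry and then run the Ferrand--Obata strategy underlying Theorem~\ref{Ferrand-Obata}. By the Chern--Moser--Tanaka equivalence, a $(2n+1)$-dimensional strictly pseudoconvex CR manifold $(M,H,J)$ is the same data as a regular, normal parabolic Cartan geometry $(\mathcal G\to M,\omega)$ of type $(G,P)$ with $G=\mathrm{PU}(n+1,1)$, $P\subset G$ the stabilizer of a null line, and $G/P=S^{2n+1}$ the standard CR sphere; CR automorphisms are precisely the automorphisms of $(\mathcal G,\omega)$, and $A:=\mathrm{Aut}(M,H,J)$ is a Lie group acting effectively on $M$ by the rigidity of Cartan geometries. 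The first step is to promote essentiality to non-compactness of $A$: if $A$ were compact, fix any contact form $\theta_0$ in the class and average it, writing $\phi^*\theta_0=f_\phi\,\theta_0$ with $f_\phi>0$ (positivity because $A$ preserves the co-orientation of $H$); then $\theta:=\bigl(\int_A f_\phi\,d\phi\bigr)\theta_0$ is again a contact form in the class and is $A$-invariant, so $\mathrm{Aut}(M,\theta,J)=A=\mathrm{Aut}(M,H,J)$, contradicting essentiality. Hence $A$ is non-compact.

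The core of the argument is to show that a \emph{compact} strictly pseudoconvex CR manifold with non-compact automorphism group is CR-flat, i.e.\ the curvature $\kappa$ of its normal Cartan connection (equivalently, its Chern--Moser tensor) vanishes identically. Pick $\phi_k\in A$ with no convergent subsequence; using compactness of $M$ and of point-stabilizers in $P$, extract a non-properness configuration --- points $x_k\to x_\infty$ with $\phi_k(x_k)\to y_\infty$ --- and, after trivializing $\mathcal G$ near $x_\infty$ and $y_\infty$, a holonomy sequence $p_k\in P$ leaving every compact subset of $P$. The curvature function $\kappa\colon\mathcal G\to\Lambda^2(\g/\p)^*\otimes\g$ is $P$-equivariant, $\kappa(u\cdot p_k)=p_k^{-1}\cdot\kappa(u)$, and in this $|2|$-graded parabolic the regularity of $\omega$ says exactly that every component of $\kappa$ has strictly positive homogeneity with respect to the grading. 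One then arranges --- and this is where the structure $P=G_0\ltimes P_+$ and the precise form of the $\phi_k$ must be used --- that the grading part of $p_k$ runs off to the contracting side, so that $\|p_k^{-1}\cdot\kappa(u)\|\to\infty$ whenever $\kappa(u)\neq0$; boundedness of curvature on the compact $M$ then forces $\kappa$ to vanish on a nonempty open set, and a further propagation argument (using the developing map below on the flat part together with the dynamics of $A$) upgrades this to $\kappa\equiv 0$ on all of $M$. I expect this to be the main obstacle: it is precisely where, in the analytic approach of Webster and Schoen, one instead pins down a canonical contact form (via the CR Yamabe solution of Jerison and Lee) and runs a delicate elliptic blow-up/concentration analysis, and in either route the subtlety is in controlling the holonomy sequence --- especially preventing the reductive part of $p_k$ from absorbing the expansion --- and then propagating flatness.

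With $M$ compact and CR-flat, it carries a $(G,G/P)$-structure: there are a developing map $\mathrm{dev}\colon\widetilde M\to S^{2n+1}$, a local diffeomorphism, and a holonomy homomorphism $\mathrm{hol}\colon\pi_1(M)\to G$ with $\mathrm{dev}$ equivariant; set $\Gamma:=\mathrm{hol}(\pi_1(M))$. It remains to see that $\mathrm{dev}$ is a diffeomorphism onto $S^{2n+1}$. The non-compact identity component $A^{\circ}$ lifts to $\widetilde M$ and is carried by $\mathrm{dev}$ to a non-compact subgroup $L\subseteq G$ normalizing $\Gamma$; analyzing the dynamics of $L$ on $\partial\mathbb{H}^{n+1}_{\C}=S^{2n+1}$ and of the discrete group $\Gamma$, together with compactness of $M=\widetilde M/\pi_1(M)$ --- as in Frances's treatment of rank-one parabolic geometries --- forces $\mathrm{dev}$ to be a covering map onto its image, the image to be all of $S^{2n+1}$, and hence, $S^{2n+1}$ being simply connected, $\mathrm{dev}$ to be a diffeomorphism $\widetilde M\xrightarrow{\ \sim\ }S^{2n+1}$ with $\Gamma$ finite acting freely. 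Finally, essentiality forces $\Gamma=\{1\}$: a nontrivial $\gamma\in\Gamma$ acts freely on $S^{2n+1}$, hence is elliptic with a single fixed point $p\in\mathbb{H}^{n+1}_{\C}$ (a larger fixed locus would meet the boundary), so $C_G(\gamma)$, and therefore $C_G(\Gamma)$, lies in the compact stabilizer of $p$; since $N_G(\Gamma)/C_G(\Gamma)$ embeds in the finite group $\mathrm{Aut}(\Gamma)$, the group $\mathrm{Aut}(M,H,J)=N_G(\Gamma)/\Gamma$ would then be compact, contradicting the non-compactness of $A$. Therefore $\Gamma=\{1\}$ and $\mathrm{dev}$ descends to a CR-diffeomorphism of $M$ onto the standard $S^{2n+1}$.
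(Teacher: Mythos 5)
First, a point of reference: the paper does not prove this statement at all --- it is quoted as a known consequence of the work of Webster and Schoen, whose arguments are analytic (pseudo-Hermitian geometry, the CR Yamabe problem, and a blow-up/concentration analysis of a canonical contact form). Your plan instead follows the Cartan-geometric, dynamical route of Frances (the strategy behind Theorem~\ref{FrancesAlt}), so you are proposing a genuinely different proof from the one the paper is implicitly leaning on. That route is known to work --- Frances's rank-one theorem specializes to this statement --- but what you have written is a plan with a real gap at its center, and you say so yourself.

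The gap is the flatness step. From non-compactness of $\mathrm{Aut}(M,H,J)$ you extract a non-properness configuration and a holonomy sequence $p_k$ leaving every compact subset of $P$, and you then assert that one can ``arrange'' for the grading part of $p_k$ to run off to the contracting side so that $\|p_k^{-1}\cdot\kappa(u)\|\to\infty$ wherever $\kappa(u)\neq 0$. This is precisely the hard content of the theorem, and nothing in your sketch supplies it: a sequence leaving every compact of $P=G_0\ltimes P_+$ can diverge in the reductive factor $G_0$ in a way that preserves or even shrinks the norm of a positively homogeneous curvature component, so one must use a Cartan ($KAK$-type) decomposition of the holonomy sequence, pass to the $A$-part, and exploit the specific weights with which the harmonic curvature module sits inside $\Lambda^2(\g/\p)^*\otimes\g$ to see that the relevant eigenvalues actually blow up. Even granting that, the standard argument yields vanishing of $\kappa$ only at the limit points of the dynamics (or along stable sets), not on a nonempty open set, and upgrading to $\kappa\equiv 0$ on all of $M$ is a separate substantial propagation argument --- your parenthetical ``further propagation argument'' is doing a lot of unacknowledged work, since the Chern--Moser tensor is not real-analytic in general and its zero set need not propagate by unique continuation. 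The surrounding steps (averaging to show essential implies non-compact automorphism group; the endgame identifying $M$ with $S^{2n+1}$ via the developing map and ruling out nontrivial $\Gamma$ by centralizer considerations) are sound in outline, but as it stands the proposal is a roadmap to Frances's proof rather than a proof.
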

To make the connection to regular, normal Cartan geometries, we have to generalize the CR condition slightly to partially integrable, almost CR. A pseudoconvex, partially integrable, almost CR manifold is equivalent to a choice of regular, normal Cartan geometry modeled on the CR sphere $S^{2n+1}$, a homogeneous space for the action of $\mathrm{SU}(n+1,1)$ by CR diffeomorphisms. In particular, the group $\mathrm{SU}(n+1,1)$ has real rank one and its action on the sphere has a parabolic stabilizer subgroup. Frances \cite{Frances1} generalized the Ferrand-Obata and Schoen-Webster theorems, proving a Lichnerowicz-type result that applies to all real rank one parabolic geometries. Alt \cite{Alt} modified Frances's result to prove the following, under an appropriate Cartan-geometric definition of essentiality.
\begin{theorem}\label{FrancesAlt}
Let $\mathcal{G}\to M$ be a regular, real rank one parabolic geometry. If the parabolic structure is essential, then $\mathcal{G}$ is geometrically isomorphic to either the compact homogeneous model $G/P$ or the noncompact $G/P\backslash \{eP\}$.
\end{theorem}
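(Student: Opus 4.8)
The plan is to follow the dynamical strategy of Frances and Alt, studying the action of $\mathrm{Aut}(\mathcal{G})$ on the Cartan bundle $\mathcal{G}$ and exploiting the contraction built into a real rank one parabolic. The first step is to convert essentiality into non-properness. A regular rank one parabolic geometry carries Weyl structures (\v{C}ap--Slov\'ak) and hence, via Alt's framework, an averaging procedure: were $\mathrm{Aut}(\mathcal{G})$ to act properly on $M$, averaging an arbitrary Weyl structure against a compactly supported measure would produce an $\mathrm{Aut}$-invariant Weyl structure, i.e.\ a compatible reduction, contradicting essentiality. So essentiality yields automorphisms $g_m \in \mathrm{Aut}(\mathcal{G})$, points $x_m \to x$ in $M$ with $g_m x_m \to y$, and $(g_m)$ leaving every compact subset of $\mathrm{Aut}(\mathcal{G})$.

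The second step extracts a holonomy sequence. Choose lifts $p_m \to p$ over $x_m$ and $q_m \to q$ over $g_m x_m$ in $\mathcal{G}$; since automorphisms commute with the principal $P$-action there is a unique $h_m \in P$ with $g_m \cdot p_m = q_m \cdot h_m$. Because $\mathrm{Aut}(\mathcal{G})$ acts freely and properly on the total space $\mathcal{G}$ (a standard rigidity fact — automorphisms are pinned down by one jet), non-properness on $M$ forces $(h_m)$ to be unbounded in $P$. Real rank one now enters through the Cartan decomposition $G = K A^{+} K$ with $A^{+} = \{\exp(tH): t \geq 0\}$ a single ray: after passing to a subsequence, $h_m = \ell_m a_m r_m$ with $\ell_m, r_m$ convergent in $K$ and $a_m = \exp(t_m H)$, $t_m \to +\infty$. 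Replacing $g_m$, $p_m$, $q_m$ by modified data absorbs the bounded factors, leaving $g_m \cdot p_m = q_m \cdot a_m$ with $a_m$ escaping to infinity along one ray.

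The third and most technical step deduces flatness. In rank one the grading $\g = \g_{-j_0}\oplus\cdots\oplus\g_{-1}\oplus\g_0\oplus\g_1\oplus\cdots\oplus\g_{j_0}$ has few pieces, and $\mathrm{Ad}(a_m)$ acts on $\g_j$ by $e^{jt_m}$. Regularity forces the curvature function $\kappa\colon\mathcal{G}\to\Lambda^2(\g/\p)^{*}\otimes\g$ to have strictly positive homogeneity: each nonzero homogeneous component, with inputs of weights $i,j<0$ and output of weight $k$, satisfies $\ell := k-i-j \geq 1$. Since automorphisms preserve $\kappa$, the relation $g_m\cdot p_m = q_m\cdot a_m$ gives $\kappa(q_m)(\cdot,\cdot) = \mathrm{Ad}(a_m)\,\kappa(p_m)\!\big(\mathrm{Ad}(a_m)^{-1}\cdot,\mathrm{Ad}(a_m)^{-1}\cdot\big)$, so the $(i,j,k)$-component of $\kappa(p_m)$ equals $e^{-\ell t_m}$ times that of $\kappa(q_m)$; as $m\to\infty$ the right side is bounded (indeed $\kappa(q_m)\to\kappa(q)$) while $e^{-\ell t_m}\to 0$, so $\kappa(p_m)\to 0$ and hence $\kappa_x = 0$. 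The same estimate applies to nearby points, whose holonomy sequences are again of the form $(\text{bounded})\cdot a_m\cdot(\text{bounded})$, so the flat locus is open; it is closed by continuity; by connectedness $\mathcal{G}$ is flat. Thus $M$ inherits a $(G,G/P)$-structure with developing map $\delta\colon\widetilde M\to G/P$, holonomy $\rho\colon\pi_1(M)\to G$, and an action of $\mathrm{Aut}(\mathcal{G})$ compatible with $\delta$ through elements of $G$.

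The last step pins down the global type. The unbounded sequence realizing non-properness acts on $G/P$, and in rank one an unbounded sequence in $G$ has, up to a subsequence, north--south dynamics: it contracts $G/P\smallsetminus\{p^{-}\}$ onto an attracting point $p^{+}$. Frances' completeness and injectivity arguments for flat parabolic structures carrying such dynamics then show $\delta$ is a diffeomorphism onto $G/P$ or onto $G/P$ minus a single point (the deleted point being the repelling pole); since both targets are recognized models, $M$ is geometrically isomorphic to $G/P$ or to $G/P\smallsetminus\{eP\}$, with trivial holonomy in the compact case — a normalizer argument, again using the rigidity of rank one groups, ruling out nontrivial quotients. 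The main obstacle is the third step: making the curvature estimate propagate rigorously from the limit point to all of $M$ requires careful control of the exponential map of the Cartan connection and of how the holonomy sequences vary over a neighborhood, and in the last step the degenerate case $p^{+}=p^{-}$ and the possibility of $\delta$ omitting more than one point must be excluded — precisely the places where the rank one hypothesis is used decisively, and whose breakdown in higher rank is the phenomenon the remainder of the paper exploits.
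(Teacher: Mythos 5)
First, a point of context: the paper does not prove this statement. Theorem \ref{FrancesAlt} is quoted as background, attributed to Frances \cite{Frances1} as modified by Alt \cite{Alt}, so there is no proof in the paper to compare yours against; what you have written is a reconstruction of the Frances--Alt argument. Your overall strategy is the correct one (essentiality $\Rightarrow$ non-properness, extraction of holonomy sequences, contraction of positively homogeneous curvature in rank one, then the developing-map endgame), but two steps as written contain genuine gaps.

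The first gap is in your second step. You apply the Cartan decomposition $G=KA^{+}K$ to the holonomy elements $h_m\in P$ and then ``absorb the bounded factors.'' The factors $\ell_m,r_m\in K$ are in general not in $P$, so absorbing them moves $p_m$ and $q_m$ out of the fibers over $x_m$ and $g_mx_m$ and destroys the relation you need between $\kappa(p_m)$ and $\kappa(q_m)$. The correct analysis decomposes the unbounded sequence inside $P$ itself (Langlands decomposition $P=(MA)\ltimes N$, with $A\cong\R$ in rank one), and one must separately handle the case where the $A$-component stays bounded while the unipotent $N$-component blows up; that case does not produce the clean exponential contraction $e^{-\ell t_m}$ and is precisely where much of the technical work in Frances' paper lives. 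The second gap is your open-closed argument for flatness. Your estimate gives $\kappa=0$ on the basin of attraction of $(g_m)$, which is an open set; but the set $\{x:\kappa_x=0\}$ is automatically \emph{closed}, and what you would need for the connectedness argument is that it is \emph{open}, i.e.\ that every point where curvature vanishes has a neighborhood where it vanishes. That is false for general Cartan geometries and is not established by your ``nearby points have comparable holonomy sequences'' remark, which only enlarges the basin, not the full flat locus. Frances does not conclude global flatness this way; the passage from ``flat on a nonempty open set'' to the global classification goes through the developing map of that open set and a delicate analysis of its boundary and of the north--south dynamics on $G/P$, which is also where the noncompact alternative $G/P\setminus\{eP\}$ actually emerges. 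As a roadmap your proposal is faithful to the literature, but these two steps cannot be repaired by routine arguments.
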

There are four series of real rank one regular, normal parabolic geometries, corresponding to \textbf{(1)} conformal, \textbf{(2)} pseudoconvex, partially integrable, almost CR, \textbf{(3)} quaternionic contact, and \textbf{(4)} octonionic contact structures. These geometric structures are modeled on the homogeneous spaces \textbf{(1)} $\partial \mathbf{H}^{n+1}_{\R},$ \textbf{(2)} $\partial \mathbf{H}^{n+1}_{\C}$,  \textbf{(3)} $\partial \mathbf{H}^{n+1}_{\mathbb{H}}$, and \textbf{(4)} $\partial \mathbf{H}^2_{\mathbb{O}}$, the boundaries of real, complex, quaternionic, and octonionic hyperbolic spaces.

 D'Ambra and Gromov \cite{DambraGromov} asked if it was also true in higher signature that an essential, closed, pseudo-Riemannian conformal manifold must be flat. Frances \cite{Frances2} answered this question negatively, proving the existence of infinitely many closed, nonflat, essential conformal manifolds in each signature $(p,q)$ with $2\le p\le q$. Furthermore, Case, Curry and Matveev \cite{CaseCurryMatveev} proved that there are essential, closed, nondegenerate CR manifolds of signature $(p,q)$ with $2\le p\le q$. Both of these situations correspond to regular, normal parabolic geometries modeled on homogeneous spaces for groups of real rank at least $3$, so these cases are not addressed by Frances and Alt's Theorem \ref{FrancesAlt}.
 
 With a more general refutation of Lichnerowicz-type results for parabolic geometries in mind, Erickson \cite{Erickson2} built a Cartan geometry associated to a fixed harmonic curvature form called a curvature tree. This construction globalizes a local construction of Kruglikov and The \cite{KruglikovThe}. Taking compact quotients of the curvature tree, Erickson developed a procedure for exhibiting closed, nonflat parabolic geometries admitting essential transformations for parabolic model geometries $(G,P)$ such that $G$ is simple of real rank at least $3.$ This paper applies that procedure in all sensible cases.
 
 There are many parabolic geometries in which, by the vanishing of a certain cohomology module, there are no nonflat, regular, normal examples. These parabolic model geometries are called Yamaguchi rigid. For Yamaguchi rigid model geometries, it is vacuously true that all nonflat Cartan geometries are not essential. On the other hand, Yamaguchi compiled a list (in \cite{Yamaguchi1} with minor corrections in \cite{Yamaguchi2}) of all infinitesimal parabolic model geometries $(\g,\p)$ that are not Yamaguchi rigid. In this paper we perform Erickson's procedure for each of the model geometries on Yamaguchi's list having real rank at least $3$, proving the existence of closed, nonflat, essential manifolds for all such parabolic model geometries. Thus, all such geometries fail a Lichnerowicz-type conjecture.
\begin{theorem}[Main Theorem]\label{thm::MainTheorem}
    Suppose $(G,P)$ is a Yamaguchi nonrigid parabolic model geometry with $G$ real simple of real rank at least $3$. Then there exists a closed, nonflat, locally homogeneous, regular, normal, parabolic Cartan geometry modeled on $(G,P)$ and admitting essential transformations. 
\end{theorem}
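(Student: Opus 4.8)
The plan is to realize the conclusion as an instance of Erickson's curvature tree construction \cite{Erickson2}, and to show that the hypotheses of that construction are met by every pair $(\g,\p)$ on Yamaguchi's list with $\g$ real simple of real rank at least $3$. Recall what the construction consumes and produces. Its input is a choice of nonzero harmonic curvature value: an element $\phi_0$ of the $\g_0$-module $H^2_+(\g_-,\g)$ that is the complete obstruction to flatness of regular normal geometries, together with a one-parameter subgroup $\{a_t\}\subset P$ whose adjoint action contracts $\g_-$ and rescales $\phi_0$ by a fixed factor; one takes $\phi_0$ with stabilizer in $\g_0$ as large as possible, so that the resulting geometry is as symmetric as the gap phenomenon of Kruglikov--The \cite{KruglikovThe} allows. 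From such data Erickson builds a regular, normal, locally homogeneous Cartan geometry $\hat\omega$ on a curvature tree $\hat{\mathcal G}\to\hat M$ whose harmonic curvature is a nonzero constant multiple of $\phi_0$ at every point --- so $\hat M$ is nonflat --- and whose automorphism group is large: it contains the translation--dilation flow generated by $\{a_t\}$ together with free discrete subgroups $\Gamma$ acting freely, properly discontinuously and cocompactly on $\hat M$. So the first task is to exhibit a good pair $(\phi_0,\{a_t\})$ for each relevant $(\g,\p)$.

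Given such a pair, the remaining steps are essentially formal. Put $M=\hat M/\Gamma$. Regularity, normality, and nonvanishing of the harmonic curvature are local and natural, hence pass to the quotient; local homogeneity passes to the quotient because $\hat M$ is locally homogeneous and $\Gamma$ acts by automorphisms; and $M$ is closed because $\Gamma$ is cocompact. For the essential transformation, one chooses $\Gamma$ inside $\mathrm{Aut}(\hat{\mathcal G},\hat\omega)$ so that it is normalized by an element $g$ arising from the flow $\{a_t\}$. Since $\mathrm{Ad}(a_t)$ contracts $\g_-$, the element $g$ displays north--south dynamics and therefore cannot preserve any reduction of $\hat{\mathcal G}$ to the Levi subgroup $P_0$ --- equivalently, no Weyl structure underlying the geometry is $g$-invariant, which is precisely essentiality; in the conformal case $g$ is a homothety with a fixed point, in the CR case the corresponding pseudo-hermitian essential transformation. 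As $g$ normalizes $\Gamma$, it descends to an essential transformation of $M$. This is the mechanism behind Frances's closed nonflat essential conformal manifolds \cite{Frances2} and the nondegenerate CR examples of Case--Curry--Matveev \cite{CaseCurryMatveev}, which occur here as the two conformal/CR entries of the list.

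The substance of the paper, and what I expect to be the main obstacle, is the first task: verifying for every entry $(\g,\p)$ of Yamaguchi's list with $\g$ real simple of real rank $\ge 3$ that a suitable $(\phi_0,\{a_t\})$ exists and satisfies Erickson's compatibility conditions. This is a case-by-case analysis driven by the structure of each graded Lie algebra. For a given $(\g,\p)$ one first determines the $\g_0$-irreducible components of $H^2_+(\g_-,\g)$ (via Kostant, as in Yamaguchi's tables) and picks a weight vector $\phi_0$ with maximal stabilizer; one then constructs $\{a_t\}\subset P$ --- built from the grading element and, where needed, from root vectors of a maximally split Cartan subalgebra --- with the required contraction and rescaling behavior, and checks that the associated tiles glue into a complete curvature tree. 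In the conformal and CR families this recovers the classical signature restrictions $\min(p,q)\ge 2$, and the real-rank-$\ge 3$ hypothesis is exactly what leaves enough room inside $P$ for such an $\{a_t\}$ to coexist with a cocompact $\Gamma$; the real rank $1$ cases are instead governed by Theorem \ref{FrancesAlt}. I anticipate that the genuinely laborious cases are the exceptional and sporadic ones --- gradings of depth greater than one, low-rank coincidences, and the various parabolics of split $G_2$, $F_4$, $E_6$, $E_7$ and $E_8$ --- where the harmonic module and the contracting subgroup have to be located by hand and one must first confirm that the real rank is $\ge 3$ rather than $1$ or $2$ before the construction applies.
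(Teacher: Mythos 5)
Your proposal correctly identifies the paper's overall strategy---feed a suitable harmonic curvature value into Erickson's curvature tree construction and take a Hopf-type compact quotient, with the substance being a case-by-case verification over Yamaguchi's list---but it stops at the level of a plan and never supplies the content that actually constitutes the proof. The conditions Erickson's Theorem 4.2 (Theorem \ref{thm::EssentialCompact} here) requires are quite specific: one needs a \emph{harmonic seed} $\Omega$ (a harmonic element satisfying the Kruglikov--The property with $\im(\Omega)\subset\b_-$) whose restricted weight $\tau$ has $\tau^{\sharp}$ \emph{not} a scaling element, together with $a_0=\alpha^{\sharp}+R\in\ker\tau\cap\ker\nu_0$ and $c_0\in\ker\tau$ positive on $\hat{\Delta}^+(\p^+)$. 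None of these appears in your write-up; in their place you posit a contracting one-parameter subgroup and argue essentiality from ``north--south dynamics'' preventing an invariant Weyl structure, which is not the mechanism used here (the paper detects essentiality via scaling elements and the line bundles $\mathcal{L}^{\lambda}$, Proposition \ref{prop::EssentialityCriterion}, applied to $\exp(a_0)\in G_0$) and is not justified as stated. You also choose $\phi_0$ ``with stabilizer as large as possible,'' whereas the construction here uses lowest weight vectors, whose weights must be computed via Borel--Weil--Bott on the complexification and then compared to the real harmonic module (Theorem \ref{thm::LowestWeightIsRestriction})---a nontrivial step your proposal skips entirely.

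Beyond the missing case analysis itself (which is the bulk of the paper: Lemmas \ref{lemma::RealNotScaling} and \ref{lemma::a0Exists}), two concrete points would derail your plan if executed as described. First, there is one model geometry, $(\sl_4(\H),P_{2,6})$, for which \emph{no} lowest weight vector admits the required $a_0$; the paper must construct a non-lowest-weight harmonic seed by hand in Section \ref{subsection::sl4}, and your framework has no provision for this. Second, your forecast of where the difficulty lies is inverted: the exceptional parabolics all have $|I|=1$ and are dispatched uniformly (with $G_2$ excluded outright by the rank hypothesis), while the laborious cases are classical ones with several crossed nodes and, especially, non-split real forms, where one must pass through Satake diagrams and restricted root systems; the case where $\g$ is itself complex also requires a separate argument via a split real form (Propositions \ref{prop::FirstMap} and \ref{prop::SecondMap}) that your proposal does not mention.
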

We can put this result in the context of the so far unresolved Lorentzian Lichnerowicz conjecture, on which there has been significant progress \cite{Mehidi}\cite{MelnickFrances}\cite{MelnickPecastaing}\cite{Pecastaing1}\cite{Pecastaing2}.
\begin{conjecture}[Lorentzian Lichnerowicz]
If $M$ is an essential, closed, Lorentzian conformal manifold, then $M$ is conformally flat.
\end{conjecture}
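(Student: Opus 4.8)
The plan is to reduce the Main Theorem to a finite, explicit construction driven by Yamaguchi's classification, and then to run Erickson's curvature-tree machinery on each surviving case. First I would recall that Yamaguchi nonrigidity of $(\g,\p)$ is equivalent to the nonvanishing, in strictly positive homogeneity, of the harmonic curvature module $H^2(\g_-,\g)$ computed by Kostant, whose weight vectors parametrize the admissible constant harmonic curvatures. Yamaguchi's list (with the corrections of \cite{Yamaguchi2}) enumerates exactly those $(\g,\p)$ for which this module is nonzero, so intersecting it with the requirement that $\g$ be real simple of real rank at least $3$ leaves a finite catalogue of infinitesimal models to treat. For each such model I would fix a lowest-weight vector $\phi \in H^2(\g_-,\g)$ to serve as the target harmonic curvature.

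The core of the argument is to realize $\phi$ as the harmonic curvature of a locally homogeneous, regular, normal Cartan geometry. Following Erickson \cite{Erickson2}, I would build the curvature tree associated to $\phi$: a principal $P$-bundle carrying a Cartan connection whose curvature function is constant and whose harmonic part equals $\phi$ at every point. Because $\phi$ lies in the kernel of the Kostant codifferential $\partial^{*}$ (harmonicity) and sits in positive homogeneity, normality and regularity follow from the algebraic properties of $\phi$, while nonflatness is immediate from $\phi \ne 0$. The tree globalizes the Kruglikov--The local model \cite{KruglikovThe} by permitting flow along the curvature directions, so that the resulting geometry is both complete in the relevant sense and, by construction, locally homogeneous.

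Next I would extract the essential transformations. The grading element of $\p$ generates a one-parameter group of automorphisms of the homogeneous model that scales $\phi$ by its homogeneity weight; transporting this flow to the curvature tree yields a one-parameter group of automorphisms. To see it is essential in the sense of Frances and Alt (Theorem \ref{FrancesAlt}), I would exhibit a fixed point at which the linear isotropy, read off inside $P$, contains a nontrivial $\R_{>0}$-scaling rather than lying in a reductive Levi factor, so that no reduction to an isometry-type subgroup can be preserved. Finally, to pass from the tree to a closed manifold I would quotient by a discrete cocompact group of automorphisms normalizing the flow, so that a nontrivial essential automorphism descends to the compact quotient $\bar M$, which then has all the required properties.

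The main obstacle I anticipate is twofold and lives in the case analysis. First, one must verify in each entry of the list that the chosen weight vector $\phi$ is compatible with a constant-curvature, normal Cartan connection---that the Bianchi-type integrability constraints imposed by holding $\phi$ fixed can actually be solved---and, where the naive lowest-weight choice fails, to adjust $\phi$ within the harmonic module. Second, and more delicate, is confirming that the induced flow is genuinely essential rather than collapsing to a bounded (hence inessential) isotropy, and that a cocompact lattice of automorphisms survives to perform the compactification while keeping an essential transformation alive. Checking that every real-rank-at-least-$3$ model on Yamaguchi's list is \emph{sensible} in precisely this sense, and isolating any degenerate entries, is where the real work concentrates.
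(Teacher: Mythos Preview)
The statement you were asked to address is the \emph{Lorentzian Lichnerowicz conjecture}, which the paper explicitly presents as an open problem (``the so far unresolved Lorentzian Lichnerowicz conjecture''). The paper does not prove it, and there is no ``paper's own proof'' to compare against. Your proposal is not a proof of this conjecture at all; it is a sketch of the proof of the Main Theorem (Theorem~\ref{thm::MainTheorem}), which is a different statement.

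Worse, the two statements point in opposite directions. The conjecture asserts that essential closed Lorentzian conformal manifolds must be flat. The Main Theorem, and your sketch, \emph{construct} nonflat essential closed examples---i.e., they produce counterexamples to Lichnerowicz-type statements, not proofs of them. And the Lorentzian case is modeled on $\O(n,2)$, which has real rank $2$; this lies strictly outside the real-rank-$\ge 3$ hypothesis that both the Main Theorem and your argument require. The paper makes exactly this point: Lorentzian Lichnerowicz sits between the rank-$1$ regime where Lichnerowicz-type results hold and the rank-$\ge 3$ regime where they fail. So even as a strategy for the Main Theorem, your outline cannot be repurposed to settle the conjecture, in either direction.
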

Lorentzian conformal manifolds of dimension $n\ge 3$ are regular, normal, parabolic Cartan geometries modeled on a homogeneous space for the group $\O(n,2).$ Since $\O(n,2)$ for $n\ge 3$ is simple of real rank $2,$ Lorentzian Lichnerowicz falls into an intermediate case between Frances and Alt's Theorem \ref{FrancesAlt} for real rank $1$ - a domain where Lichnerowicz-type results hold - and our Theorem \ref{thm::MainTheorem} for real rank at least $3$ - a domain where they fail.

While this paper does not address failure of Lichnerowicz-type conjectures in real rank $2$, we believe that the Erickson techniques can be used to construct noncompact, nonflat, essential geometries for many but not all of of the Yamaguchi nonrigid parabolic model geometries modeled on homogeneous spaces for simple groups of real rank $2$. However, the construction of an essential compact quotient seems to rely in an important way on the assumption of real rank at least $3$.

\section*{Acknowledgments}
I am particularly grateful to Jacob Erickson for suggesting the problem, setting up the framework needed to address it, and for his thoughtful feedback on multiple drafts of this paper.
\section{Lie Theory Background}
\subsection{Conventions}
Given a real vector space $V$, we denote its complexification by $V^{\C}$. If $V$ is a complex vector space, $\overline{V}$ is the same vector space but with the conjugate complex structure. We call a real Lie algebra \emph{noncomplex} it is not the underlying real Lie algebra of some complex Lie algebra. If $\g$ is a Lie algebra and $\h$ is a subalgebra, then $\z(\g)$ represents the center of $\g$ while $\z_{\g}(\h)$ represents the centralizer of $\h$ in $\g$. Given an inner product, a vector $v$ determines a covector $v_{\flat}$ and a covector $\alpha$ determines a vector $\alpha^{\sharp}$ via the musical isomorphisms. If $\alpha$ is a root or restricted root of a Lie algebra $\g$, $\eta_{\alpha}$ refers to some nonzero vector in the rootspace or restricted rootspace $\g_{\alpha}.$ The notation $\alpha_k$ refers to a simple root, while $\beta_k$ refers to a simple restricted root. For simple roots $\alpha_i,\alpha_j,$ the notation $(ij)$ refers to the composition of simple root reflections $s_i\circ s_j.$ The letter $\mu$ always refers to the highest root of a Lie algebra. Coefficients of the Cartan matrix are written $c_{ij}.$
\subsection{Structure Theory of Real Lie Algebras}
 Given a real semisimple Lie algebra $\g$, we assume a fixed choice of Cartan involution $\theta:\g\to \g$, a maximally noncompact $\theta$-stable Cartan subalgebra $\c\le \g$ with noncompact part $\a$, complexification $\h:=\c^{\C}\le \g^{\C}$, and a root system $\Delta\subset \h^*$ for $\g^{\C}.$ Let $\sigma:\g^{\C}\to \g^{\C}$ be the conjugation about $\g$ and define $\sigma^*:\Delta\to \Delta$ by $\alpha\mapsto \overline{\alpha\circ \sigma}.$ Define the set of \emph{compact roots} 
\[
\Delta_c=\{\alpha\in \Delta: \sigma^*\alpha=-\alpha\}.
\]
We may fix a positive subsystem $\Delta^+\subset \Delta$ such that $\Delta^+\backslash \Delta_c$ is preserved by $\sigma^*.$ For $\alpha\in \Delta,$ we have $\alpha|_{\a}=0$ exactly when $\alpha\in \Delta_c.$ Then define \emph{restricted roots}
\[
\hat{\Delta}=\{\alpha|_{\a}:\alpha\in \Delta\backslash \Delta_c\}\subset \a^*.
\]
We get the restricted root space decomposition
\[
\g=\z_{\g}(\a)\oplus 
\bigoplus_{\alpha\in \hat{\Delta}}\g_{\alpha}
 \]
 where 
 \[
 \g_{\alpha}=\{X\in \g: \ad(H)(X)=\alpha(H)X \ \mathrm{for} \ H\in \a\}
 \]
is a \emph{restricted root space}. There is an induced positive subsystem $\hat{\Delta}^+\subset \hat{\Delta}$ obtained by restricting all roots of $\Delta^+\backslash \Delta_c$ to $\a.$

Given a simple system $\Delta^0$ for $\Delta^+$, the restrictions to $\a$ of the roots in $\Delta^0\backslash \Delta_c$ form a simple system $\hat{\Delta}^0$ for $\hat{\Delta}^+,$ with some pairs of simple roots in $\Delta^0$ restricting to a single simple restricted root in $\hat{\Delta}^0.$ Compact simple roots are excluded because they all restrict to $0.$ A subset of simple restricted roots $\I \subset \hat{\Delta}^0$ determines a parabolic subalgebra by the following process. Given a restricted root 
\[
\alpha=\sum_{\beta\in \Delta^0}n_{\beta}\cdot\beta\in \hat{\Delta},
\]
define its $\hat{I}$-\emph{height} by
\[
h_{\hat{I}}(\alpha):=\sum_{\beta\in \hat{I}}n_{\beta}.
\]
Then $\p\le \g$ is defined by
\[
\p=\z_{\g}(\a)\oplus 
\bigoplus_{\alpha\in \hat{\Delta}:h_{\hat{I}}(\alpha)\ge 0} \g_{\alpha}.
\]
Notably, $\p=\g$ if and only if $\hat{I}=\varnothing$. Up to inner automorphism, this gives every parabolic subalgebra of $\g.$ Therefore we will assume without loss of generality that a given parabolic subalgebra is in this form. Define 
\[
\hat{\Delta}(\g_0):=\{\alpha\in \hat{\Delta}: h_{\I}(\alpha)=0\}
\]
and
\[
\hat{\Delta}^+(\p^+):=\{\alpha\in \hat{\Delta}: h_{\hat{I}}(\alpha)>0\}.
\]
For $\hat{I}\neq \varnothing$, the restriction $\mu|_{\a}$ of the highest root of $\g^{\C}$ is always in $\hat{\Delta}^+(\p^+).$
The nilradical of $\p$ is 
\[
\p_+:=\bigoplus_{\alpha\in \hat{\Delta}^+(\p^+)} \g_{\alpha}.
\]
The \emph{Levi} subalgebra is the reductive subalgebra 
\[
\g_0:=\z_{\g}(\a)\oplus \bigoplus_{\alpha\in \hat{\Delta}(\g_0)} \g_{\alpha},
\]
which is isomorphic to the quotient $\p/\p_+$ of $\p$ by its nilradical. Since $\g_0$ is reductive, its derived subalgebra $\g_0^{ss}:=[\g_0,\g_0]$ is semisimple. One possible source of confusion with the notation $\g_0$ is that it can be mistaken for the restricted root space $\g_{\alpha}$ when $\alpha=0.$ However, $0$ is not considered a restricted root and this space is instead written $\z_{\g}(\a).$
\begin{proposition}\label{prop::ComplementarySpaces}
The complementary subspaces $\z(\g_0),\g_0^{ss}\le \g$ are Killing-orthogonal.
\end{proposition}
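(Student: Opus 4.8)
The plan is to deduce the orthogonality directly from the $\mathrm{ad}$-invariance of the Killing form, with the reductive structure of $\g_0$ doing the rest. First, recall that since $\g_0$ is reductive it splits as $\g_0 = \z(\g_0)\oplus \g_0^{ss}$, with $\g_0^{ss} = [\g_0,\g_0]$ semisimple; this is what makes the two subspaces genuinely complementary inside $\g_0$, so the content of the proposition is the orthogonality statement.

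Write $B$ for the Killing form of $\g$. The two facts I would use are: (i) $B$ is $\mathrm{ad}$-invariant, i.e. $B([A,C],D) = B(A,[C,D])$ for all $A,C,D\in \g$; and (ii) the bracket of two elements of $\g_0$, taken in $\g$, agrees with their bracket in $\g_0$, since $\g_0\le \g$ is a subalgebra. Given these, let $Z\in \z(\g_0)$ and let $X\in \g_0^{ss} = [\g_0,\g_0]$ be a generator of the form $X = [Y_1,Y_2]$ with $Y_1,Y_2\in \g_0$. Then
\[
B(Z,X) = B(Z,[Y_1,Y_2]) = B([Z,Y_1],Y_2) = B(0,Y_2) = 0,
\]
because $Z$ is central in $\g_0$ and $Y_1\in \g_0$, so $[Z,Y_1] = 0$. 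Since $\g_0^{ss}$ is spanned by brackets of elements of $\g_0$, bilinearity of $B$ upgrades this to $B(Z,X) = 0$ for every $X\in \g_0^{ss}$, which is exactly the assertion.

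I do not expect any real obstacle; the one point requiring a moment's care is that the Killing form in the statement is that of the ambient algebra $\g$, not of $\g_0$ — but $\mathrm{ad}$-invariance holds for the Killing form of every Lie algebra, and the inclusion $\g_0\hookrightarrow \g$ is a homomorphism, so the displayed computation is valid as written. If one prefers a root-theoretic check, one can instead observe that $\z(\g_0)\subseteq Z(\a)$ acts on each restricted root space $\g_\alpha$ by a scalar and that $B$ pairs $\g_\alpha$ only with $\g_{-\alpha}$, but the invariance argument is shorter and avoids case analysis.
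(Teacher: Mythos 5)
Your proof is correct and follows essentially the same route as the paper: both arguments use associativity (ad-invariance) of the Killing form together with the fact that every element of $\g_0^{ss}$ is a sum of brackets, so that $\langle Z,[Y_1,Y_2]\rangle = \langle [Z,Y_1],Y_2\rangle = 0$ by centrality of $Z$. The only cosmetic difference is that the paper invokes perfectness of the semisimple algebra $\g_0^{ss}$ (brackets of elements of $\g_0^{ss}$ itself) while you use the definition $\g_0^{ss}=[\g_0,\g_0]$; either suffices.
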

\begin{proof}
    For $X\in \z(\g_0)$ and $Y,Z\in \g_0^{ss},$
    \[
    \langle X,[Y,Z]\rangle=\langle [X,Y],Z\rangle=0.
    \]
    Since $\g_0^{ss}$ is perfect, the claim follows.
\end{proof}
Observe that if $\beta_k\in \hat{\Delta}^0\backslash \hat{I}$ then $\g_{\beta_k}\le \g_0.$ Then, for $X\in \z(\g_0)\cap \a,$ we must have 
\[
0=\beta_k(X)=\langle X,\beta_k^{\sharp} \rangle.
\]
It follows from Proposition \ref{prop::ComplementarySpaces} that $\beta_k^{\sharp}\in \g_0^{ss}\cap \a.$ From their definitions, we know that $\p=\g_0\oplus \p_+.$ In fact, $h_{\hat{I}}$ induces a grading 
\[
\g=\g_{-k}\oplus \cdots \oplus \g_0 \oplus \cdots \oplus\g_k,
\] 
and we define $\g_-:=\g_{-k}\oplus \cdots \oplus\g_{-1}$ and we already have $\p_+=\g_1\oplus \cdots \oplus\g_k,$ so that $\g=\g_-\oplus \g_0 \oplus \p_+.$ There exists a special element $E\in \a$ such that $[E,X]=iX$ for $X\in \g_i,$ known as the \emph{grading element}. In particular, this definition implies $E\in \z(\g_0).$

For every $\alpha\in \Delta^0\backslash \Delta_c$, there exists a unique $\overline{\alpha}\in \Delta^0\backslash \Delta_c$ such that
\[
\sigma^*(\alpha)=\overline{\alpha}+\sum_{\beta\in \Delta_c\cap \Delta^0} n_{\beta}\cdot \beta
\]
for some integers $n_{\beta}$. Then $\alpha|_{\a}=\overline{\alpha}|_{\a}.$ To draw the \emph{Satake diagram} for a Lie algebra $\g,$ take the Dynkin diagram for $\g^{\C},$ color the elements of $\Delta_c\cap\Delta^0$ black, and for noncompact simple roots $\alpha,$ if $\alpha\neq \overline{\alpha},$ connect $\alpha$ and $\overline{\alpha}$ by a bi-directional arrow. The resulting diagram is independent of a choice of maximally noncompact Cartan subalgebra $\c$ and positive system $\Delta^+\subset \Delta$ compatible with $\sigma$. A full list of Satake diagrams of real Lie algebras $\g$ such that $\g^{\C}$ is simple is given in Appendix A. 

To pass from a Satake diagram to a set of simple restricted roots, delete any compact roots and glue together any pair of roots connected by a bi-directional arrow. The problem of determining inner products of simple restricted roots relative to the Killing form on $\a^*$ to obtain the restricted Dynkin diagram is a bit more subtle, but a table of the results is collected the Appendix A, and in particular connected subsets of simple roots correspond to connected subsets of simple restricted roots.

As discussed above, parabolic subalgebras correspond to subsets $\hat{I}\subset \hat{\Delta}^0.$ Considering simple roots of $\Delta^0$ that restrict to roots in $\hat{I},$ we get a subset $I\subset \Delta^0$ such that $I\cap \Delta_c=\varnothing$ and if $\alpha\in I$ then $\overline{\alpha}\in I.$ We will call a subset $I\subset \Delta^0$ compatible with $\g$ when these two conditions hold.

Similar to what we did above for the real semisimple case, $I\subset \Delta^0$ defines a height function $h_I$ on $\Delta\cup \{0\}$, and thereby determines a complex parabolic subalgebra $\p_I\le \g^{\C}.$ When $I$ is compatible with $\g$, there is a corresponding subset $\hat{I}\subset \hat{\Delta}^0$ given by the set of restrictions of elements of $I,$ and $\hat{I}$ induces a parabolic subalgebra $\p_{\hat{I}}\le \g$ for which $(\p_{\hat{I}})^{\C}=\p_I.$ Thus parabolics $\p_I\le\g_{\C}$ determined by subsets $I\subset \Delta^0$ compatible with $\g$ correspond one-to-one with the parabolics $\p_{\hat{I}}\le\g$ by complexification.

\subsection{Real and Complex Representations}
 If $\g$ is complex semisimple and $X_1,\dots X_n$ is a $\C$-basis for $\g,$ then $X_1,\dots X_n, iX_1,\dots iX_n$ is an  $\R$-basis for $\g.$ Then given a $\C$-linear endomorphism $T:\g\to \g,$ we have $\mathrm{tr}_{\R}(T)=2\mathrm{Re}(\tr_{\C}(T)).$ If $\g$ is a complex Lie algebra, let $\langle\cdot,\cdot \rangle_{\C}$ be the Killing bracket on $\g$, and let $\langle\cdot,\cdot \rangle_{\R}$ be the Killing bracket on $\g$ as a real Lie algebra. Then $\langle x, y \rangle_{\R}=2\mathrm{Re}(\langle x, y \rangle_{\C}).$ Let $\h_0\le\h$ be the subspace on which roots of $\g$ are real-valued. Because $\langle\cdot, \cdot \rangle_{\C}$ is real-valued on $\h_0$, if $x,y\in \h_0,$ then $\langle x,y \rangle_{\R}=2\langle x, y \rangle_{\C}.$ These brackets induce isomorphisms $\flat_{\C}:\h_0\mapsto \h_0^*$ and $\flat_{\R}:\h_0\to \h_0^*,$ which both induce Killing brackets on $\h_0^*.$ We have $\flat_{\R}=2\flat_{\C}.$ Calling their inverses $\sharp_{\R}$ and $\sharp_{\C},$ $\sharp_{\C}=2\sharp_{\R}.$

\begin{proposition}\label{prop::KillingBracketComparison}
Let $\langle \cdot, \cdot \rangle_{\C}, \langle \cdot, \cdot \rangle_{\R}$ be the complex and real Killing brackets induced on $\h_0^*$. Then
$\langle x, y \rangle_{\C}=2\langle x, y \rangle_{\R}.$
\end{proposition}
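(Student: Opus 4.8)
The plan is to unwind the definition of the induced bracket on $\h_0^*$ and chase the two scaling relations already recorded in the paragraph preceding the statement. By definition the bracket on $\h_0^*$ induced by $\g$ satisfies $\langle x, y\rangle_{\C}=\langle x^{\sharp_{\C}},y^{\sharp_{\C}}\rangle_{\C}$, and likewise $\langle x,y\rangle_{\R}=\langle x^{\sharp_{\R}},y^{\sharp_{\R}}\rangle_{\R}$, where $\sharp_{\C},\sharp_{\R}\colon\h_0^*\to\h_0$ are the inverses of the musical isomorphisms $\flat_{\C},\flat_{\R}$; the text has already established $\sharp_{\C}=2\sharp_{\R}$, as well as the relation $\langle u,v\rangle_{\R}=2\langle u,v\rangle_{\C}$ for $u,v\in\h_0$ (valid there because the complex Killing form is real-valued on $\h_0$).

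First I would substitute $\sharp_{\C}=2\sharp_{\R}$ into the definition of $\langle\cdot,\cdot\rangle_{\C}$ on $\h_0^*$ and use bilinearity of the Killing form on $\h_0$ to pull out the scalars, obtaining $\langle x,y\rangle_{\C}=4\langle x^{\sharp_{\R}},y^{\sharp_{\R}}\rangle_{\C}$. Next I would apply the relation $\langle u,v\rangle_{\R}=2\langle u,v\rangle_{\C}$ with $u=x^{\sharp_{\R}}$ and $v=y^{\sharp_{\R}}$ to rewrite $\langle x^{\sharp_{\R}},y^{\sharp_{\R}}\rangle_{\C}=\tfrac12\langle x^{\sharp_{\R}},y^{\sharp_{\R}}\rangle_{\R}=\tfrac12\langle x,y\rangle_{\R}$. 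Combining the two displays gives $\langle x,y\rangle_{\C}=4\cdot\tfrac12\langle x,y\rangle_{\R}=2\langle x,y\rangle_{\R}$, which is the claim.

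The only thing to watch is the bookkeeping of the two factors of $2$ and the directions in which they act: comparing the Killing forms on $\h_0$ contributes one factor, whereas the discrepancy between $\sharp_{\C}$ and $\sharp_{\R}$ enters a bilinear expression and so contributes a factor of $4$, and a careless computation could land on $2$, $4$, or $\tfrac12$ rather than the correct answer. Since every ingredient is already in hand from the preceding discussion, there is no genuine obstacle beyond this arithmetic care; the argument is a two-line calculation.
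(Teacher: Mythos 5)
Your proof is correct and is essentially the same computation as the paper's: both unwind $\langle x,y\rangle_{\C}=\langle x^{\sharp_{\C}},y^{\sharp_{\C}}\rangle_{\C}$ using $\sharp_{\C}=2\sharp_{\R}$ and the relation $\langle u,v\rangle_{\R}=2\langle u,v\rangle_{\C}$ on $\h_0$, yielding the factors $4\cdot\tfrac12=2$. The only difference is the order in which the two scaling relations are applied, which is immaterial.
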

\begin{proof} For $x,y\in \h_0^*,$
    \[
    \langle x,y \rangle_{\C}
    =\langle x^{\sharp_{\C}}, y^{\sharp_{\C}} \rangle_{\C}
    =\frac{1}{2}\langle 2x^{\sharp_{\R}},2y^{\sharp_{\R}} \rangle_{\R}
    =2 \langle x, y \rangle_{\R},
    \]
    proving the claim.
\end{proof}

Given $\g$ real semisimple, a representation $\g\curvearrowright V$, and $\alpha\in \a^*,$ a vector $v\in V$ is called a \emph{weight vector} of weight $\alpha$ if $H\cdot v=\alpha(H)v$ for all $H\in \a.$ The \emph{weight space} $V_{\alpha}$ is the set of such vectors. A weight vector $v$ is called a \emph{lowest weight vector} if it is annihilated by all negative restricted rootspaces of $\g.$

\begin{example}
  Suppose $\g$ is the underlying real Lie algebra of a complex semisimple Lie algebra. Let $\h\le \g$ be the Cartan subalgebra. As explained in \cite{CapSlovak}, $\h\le \g$ is maximally noncompact with noncompact part $\h_0\le\h$, the subspace on which all roots of $\g$ are real valued. Then the restricted roots of $\g$ are exactly the roots of $\g$ restricted to $\h_0,$ and each restricted root space $\g_{\alpha}$ for $\alpha\in \hat{\Delta}$ is a $2$ dimensional real space. Furthermore, if $\g\curvearrowright V$ is a complex representation, the restricted weights of $\g\curvearrowright V$ are the weights of $\g\curvearrowright V$ restricted to $\h_0.$  
\end{example}

Given $\g$ real semisimple, define the \emph{fundamental weights} $\lambda_i\in \h^*$ of $\g^{\C}$ by the property 
$\langle \lambda_i, \alpha_j \rangle=\delta_{ij}\frac{|\alpha_j|^2}{2}$ for all simple roots $\alpha_j\in \Delta^0.$ An integral weight $\gamma\in \h^*$ decomposes as $\gamma=\sum \gamma^i \lambda_i$ for integers $\gamma^i.$ Define \emph{restricted fundamental weights} $\hat{\lambda}_i\in \a^*$ by the property $\langle \hat{\lambda}_i, \beta_j \rangle=\delta_{ij}\frac{|\beta_j|^2}{2}$ for all simple restricted roots $\beta_j\in \hat{\Delta}^0$. A restricted weight $\sigma$ decomposes as $\sigma=\sum \sigma^i \hat{\lambda}_i$ for integers $\sigma^i.$

\section{Cartan Geometry Background}
\subsection{Harmonic Curvature}\label{section::HarmonicCurvature}
The Killing form induces an isomorphism $\g_-^*\cong \p_+$. Let $\g\curvearrowright V$ be an action. Define $C_k(\p_+,V):=\bigwedge^k \p_+ \otimes V$ and $C^k(\g_-,V):=\bigwedge^k \g_-^* \otimes V.$ There is an isomorphism
\[
C_k(\p_+,V) \cong C^k(\g_-,V)
\]
of $\g_0$ modules. Let $\partial:C_*(\p_+,V)\to C_*(\p_+,V)$ be the boundary map for Lie algebra homology, and let $\partial^*:C^*(\p_+,V)\to C^*(\p_+,V)$ be the coboundary map for Lie algebra cohomology. Both are $\g_0$-equivariant. Then we may define the \emph{algebraic Laplacian operator} $\Delta:C^*(\g_-,V)\to C^*(\g_-,V)$ by $\Delta:=\partial \partial^*+\partial^* \partial,$ using the identifications between chains and cochains where appropriate. Elements of $\ker \Delta$ are called \emph{harmonic}. The operators $\partial$ and $\partial^*$ are adjoint relative to a certain positive definite inner product, and the algebraic Hodge theory of these spaces implies $\ker \Delta=\ker \partial \cap \ker \partial^*$, and the existence of canonical $\g_0$-equivariant isomorphisms
\[
H_*(\p_+,V)\cong \ker \Delta \cong H^*(\g_-,V).
\]

Now we specialize to the situation where $V=\g$ under the adjoint action. Recall that we have fixed a grading on $\g$ determined by $\p.$ As described in $\cite{CapSlovak},$ a parabolic Cartan geometry $(\mathcal{G},\omega)$ is called \emph{regular} if, at each point $p\in \mathcal{G},$ the curvature form
\[
\Omega_p\in \bigwedge^2 (\g/\p)^*\otimes \g \cong C_2(\p_+,\g)\cong C^2(\g_-,\g)
\]
is contained in positive homogeneity, so $\Omega_p\in C_2(\p_+,\g)_+ \cong C^2(\g_-,\g)_+.$ A parabolic Cartan geometry is called \emph{normal} if $\Omega_p\in \ker \partial^*$ for every $p\in \mathcal{G}.$ Given a Klein pair $(G,P),$ it is often natural to consider only the regular, normal Cartan geometries modeled on $(G,P).$ If $\Omega$ is normal, it determines at every point an equivalence class $\overline{\Omega}_p\in H_2(\p_+,\g)\cong H^2(\g_-,\g).$ Regularity in combination with normality implies $\overline{\Omega}_p\in H^2(\g_-,\g)_+$. The function $\overline{\Omega}$ is called the \emph{harmonic curvature} of the Cartan geometry. Harmonic curvature is a complete invariant obstructing flatness of a Cartan geometry. The following is a consequence of Theorem 3.1.12 of \cite{CapSlovak}.
\begin{theorem}
    For a regular, normal parabolic geometry, $\overline{\Omega}=0$ implies flatness of the Cartan geometry. 
\end{theorem}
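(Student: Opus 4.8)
The plan is to reconstruct the argument behind Theorem~3.1.12 of \cite{CapSlovak}, whose essential content is that for a regular, normal parabolic geometry the lowest nonvanishing homogeneous component of the curvature is automatically harmonic. Write $\Omega\colon\mathcal{G}\to C^2(\g_-,\g)$ for the curvature function. The grading element $E\in\z(\g_0)$ makes $C^2(\g_-,\g)$ a graded $\g_0$-module by \emph{homogeneity}, in which a cochain carrying $\g_{-i}\wedge\g_{-j}$ into $\g_\ell$ has homogeneity $i+j+\ell$; since $\partial$ and $\partial^*$ are $\g_0$-equivariant they commute with $\ad(E)$ and so preserve homogeneity, as does $\Delta=\partial\partial^*+\partial^*\partial$. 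Consequently the Hodge decomposition of $C^2(\g_-,\g)$ into harmonic, $\partial$-exact, and $\partial^*$-exact parts is homogeneity-graded, and the harmonic projection $\pi_H\colon C^2(\g_-,\g)\to\ker\partial\cap\ker\partial^*$ commutes with the homogeneity grading. By regularity $\Omega=\sum_{i\ge 1}\Omega_i$ with $\Omega_i$ of homogeneity $i$, and $\overline{\Omega}=\pi_H\circ\Omega$.

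First I would show that if $\Omega\not\equiv 0$ and $\ell=\min\{i:\Omega_i\not\equiv 0\}$, then $\Omega_\ell$ is harmonic. Normality gives $\partial^*\Omega=0$, hence $\partial^*\Omega_\ell=0$. For the other half one transcribes the Bianchi identity $d\Omega=[\Omega,\omega]$ into an identity for the curvature function of the shape $\partial\Omega=(\text{a term bilinear in }\Omega)+(\text{a term built from the invariant derivative of }\Omega)$, where $\partial$ is the Lie algebra cohomology differential of $\g_-$ with coefficients in $\g$. A homogeneity count --- using that the bracket of $\g$ and the invariant derivative respect the grading, together with $\Omega_i\equiv 0$ for $i<\ell$ --- shows that both correction terms have homogeneity strictly greater than $\ell$ (the bilinear term already lies in homogeneity $\ge 2\ell$), so comparing homogeneity-$\ell$ parts gives $\partial\Omega_\ell=0$. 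Thus $\Omega_\ell\in\ker\partial\cap\ker\partial^*$ is harmonic and $\pi_H(\Omega_\ell)=\Omega_\ell$.

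Now suppose $\overline{\Omega}\equiv 0$. Since $\pi_H$ preserves homogeneity and $\Omega_i\equiv 0$ for $i<\ell$, the homogeneity-$\ell$ part of $\overline{\Omega}=\pi_H\circ\Omega$ is $\pi_H(\Omega_\ell)=\Omega_\ell$; therefore $\Omega_\ell\equiv 0$, contradicting the minimality of $\ell$. Hence $\Omega\equiv 0$, so $\omega$ satisfies the Maurer--Cartan equation, and the standard integrability theorem (cf.\ \cite{CapSlovak}) supplies, around each point of $M$, a local isomorphism of $(\mathcal{G},\omega)$ with an open piece of the flat homogeneous model $G\to G/P$ --- that is, the geometry is globally flat. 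The step I expect to be the real obstacle is the homogeneity bookkeeping inside the Bianchi identity: making precise the passage from $d\Omega=[\Omega,\omega]$ to $\partial\Omega_\ell=0$ and confirming that the quadratic and (especially) the invariant-derivative contributions genuinely occupy homogeneity $>\ell$, where the latter needs regularity of $\Omega$. Once the graded Hodge theory of $(\partial,\partial^*)$ is in hand, the rest is formal; this is precisely the analysis underlying Theorem~3.1.12 of \cite{CapSlovak}.
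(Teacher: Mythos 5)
Your argument is correct and is precisely the standard proof underlying Theorem~3.1.12 of \cite{CapSlovak}, which the paper simply cites without reproducing: normality kills $\partial^*\Omega$, the Bianchi identity plus a homogeneity count kills $\partial$ of the lowest nonvanishing homogeneous component, so that component is harmonic and must vanish if $\overline{\Omega}=0$, forcing $\Omega=0$ by induction on homogeneity. The one step you leave schematic (the bookkeeping showing the quadratic and invariant-derivative terms in the Bianchi identity live in homogeneity $>\ell$) is exactly the content of the cited theorem, and your description of why it works --- regularity forcing $\ell\ge 1$ and the grading-compatibility of the bracket and invariant derivative --- is the right one.
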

If $H^2(\g_-,\g)_+=0$, the harmonic curvature $\overline{\Omega}$ always vanishes and so all regular, normal examples are flat. In this case, the pair $(G,P)$ is called \emph{Yamaguchi rigid}. It follows that the regular, normal parabolic geometries of interest are the Yamaguchi nonrigid geometries, classified by Yamaguchi in \cite{Yamaguchi1}, with some small mistakes corrected in \cite{Yamaguchi2}. This list is given in Appendix B.

\subsection{Kostant's Borel-Weil-Bott}
Let $\g$ be complex semisimple with a fixed parabolic subalgebra. This induces a grading on $\g$ and an associated decomposition $\g=\g_-\oplus \g_0 \oplus \p_+$ into the negative, zero, and positive graded components. For $\alpha_i\in \Delta^0,$ let $s_i:\h^*\to \h^*$ be the reflection about the hyperplane orthogonal to $\alpha_i$. Let $W^{\p}\subset W$ be the Hasse diagram associated to $\p$, as explained in \cite{CapSlovak}, and let $W^{\p}(i)\subset W^{\p}$ be the subset of elements of length $i.$ Given $\alpha_i\in \Delta^0,$ define $N(i)\subset \Delta^0$ to be the subset connected to $\alpha_i$ by an edge in the Dynkin diagram. The following is Recipe 3 of \cite{KruglikovThe}.

\begin{proposition}\label{prop::HasseDescription}
The Hasse diagram elements of length $2$ are characterized by 
\[
W^{\p}(2)=\{(ij): \alpha_i\in I, \alpha_j\in N(i)\cup I\}.
\]
\end{proposition}

Let $\lambda_1,\cdots,\lambda_n$ be the fundamental weights of $\g.$ A weight $\gamma=\sum \gamma^i \lambda_i$ is called $\g_0$-dominant if $\gamma^i\ge 0$ for every $i\in \Delta^0\backslash I.$ Equivalently, $\gamma|_{\h \cap \g_0^{ss}}$ is $\g_0^{ss}$-dominant. If $\gamma$ is a $\g_0$-dominant integral weight, let $V^{\gamma}$ be the irreducible representation of lowest weight $-\gamma.$ Let $\rho:=\sum \lambda_i$ be the Weyl vector. For $w\in W$ and $\lambda\in \h^*,$ define $w\cdot \lambda:=w(\lambda+\rho)-\rho.$ For $w\in W^{\p}$ and a $\g$-dominant weight $\lambda$, $w\cdot \lambda$ is always $\g_0$-dominant. Let $\Phi_w=w(\Delta^-)\cap \Delta^+.$ In particular, $\Phi_{(ij)}=\{\alpha_i,s_i(\alpha_j)\}.$ The following is a consequence of Theorem 3.3.5 in \cite{CapSlovak}.

\begin{theorem}[Kostant's Borel-Weil-Bott]
We have
\begin{enumerate}[(a)]
\item $H_{\C}^i(\g_-,V^{\gamma})\cong \bigoplus_{w\in W^{\p}(i)} V^{w\cdot \gamma}$
as $\g_0$-modules.
\item A harmonic representative of lowest weight $-w\cdot \gamma$ in $H_{\C}^i(\g_-,V^{\gamma})$ is given by 
\[
\bigwedge_{\alpha\in \Phi_w} (\eta_{\alpha})_{\flat} \otimes v_{-w(\gamma)}
\]
for $v_{-w(\gamma)}\in V$ a vector of weight $-w(\gamma).$
\end{enumerate}
\end{theorem}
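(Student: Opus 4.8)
This is Kostant's algebraic form of the Borel--Weil--Bott theorem (see \cite{CapSlovak}, Theorem~3.3.1), so the plan is to run the standard Hodge-theoretic argument. The starting point is already recorded above: $H^i_{\C}(\g_-,V^{\gamma})\cong\ker\Delta$ inside $C^i(\g_-,V^{\gamma})=\bigwedge^i\g_-^*\otimes V^{\gamma}$, where the Laplacian $\Delta=\partial\partial^*+\partial^*\partial$ is $\g_0$-equivariant. The engine is \emph{Kostant's Laplacian identity}: since $V^{\gamma}$ is a genuine $\g$-module, on the $\g_0$-module $C^i(\g_-,V^{\gamma})$ the operator $2\Delta$ equals $c_{\gamma}\cdot\mathrm{id}$ minus the Casimir of $\g_0$ (with respect to the restricted Killing form) acting on $C^i$, where $c_{\gamma}$ is the scalar by which the Casimir of $\g$ acts on $V^{\gamma}$. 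I would prove this by expanding $\partial$ and $\partial^*$ in a Killing-dual basis of $\g$ adapted to the grading, or else quote it; it is the one genuinely nontrivial input. The consequence used is qualitative: $\Delta$ acts by a scalar on each $\g_0$-isotypic component of $C^i$, that scalar depends only on the $\g_0$-type, and therefore $\ker\Delta$ is a union of whole isotypic components.

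For part (a) it then remains to identify those types. Kostant's computation — which, via the contribution of the centre of $\g_0$, is arranged so that $\rho$ and not the Weyl vector of $\g_0$ appears — shows that the scalar by which $\Delta$ acts on a $\g_0$-type $V^{\nu}$ (lowest $\g_0$-weight $-\nu$) occurring in $C^i$ is $\tfrac{1}{2}\bigl(\langle\gamma+\rho,\gamma+\rho\rangle-\langle\nu+\rho,\nu+\rho\rangle\bigr)$, in the normalization where $c_{\gamma}=\langle\gamma+\rho,\gamma+\rho\rangle-\langle\rho,\rho\rangle$. This vanishes exactly when $\|\nu+\rho\|=\|\gamma+\rho\|$; since $\gamma+\rho$ is regular and the form is $W$-invariant, this forces $\nu+\rho=w(\gamma+\rho)$ for a unique $w\in W$, i.e.\ $\nu=w\cdot\gamma$, and $\g_0$-dominance of $V^{\nu}$ singles out $w\in W^{\p}$. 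That such a $V^{w\cdot\gamma}$ occurs in $C^i$ precisely when $i=l(w)$, with multiplicity one, is the remaining combinatorial point: the lower bound is realized by the explicit elements of part (b), while the matching upper bound and the multiplicity are Kostant's, via the character identity for the Euler characteristic (equivalently the Poincar\'e polynomial of $W^{\p}$, i.e.\ of the flag variety). Altogether $H^i_{\C}(\g_-,V^{\gamma})\cong\bigoplus_{w\in W^{\p}(i)}V^{w\cdot\gamma}$ as $\g_0$-modules.

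For part (b), put $\psi_w:=\bigwedge_{\alpha\in\Phi_w}(\eta_{\alpha})_{\flat}\otimes v_{-w(\gamma)}$. First, $\Phi_w\subseteq\hat{\Delta}^+(\p^+)$: minimality of $w$ in its right $W_{\p}$-coset forces $w^{-1}\alpha_j>0$ for every $\alpha_j\in\Delta^0\setminus I$, so $w^{-1}$ is positive on the positive roots of $\g_0$, hence $\Phi_w$ avoids those and consists of roots of positive $\I$-height. Thus $\psi_w\in C^{l(w)}(\g_-,V^{\gamma})$, and it is nonzero because the $\alpha\in\Phi_w$ are distinct and $v_{-w(\gamma)}\neq 0$; as $(\eta_{\alpha})_{\flat}$ has $\h$-weight $\alpha$, the weight of $\psi_w$ is $\sum_{\alpha\in\Phi_w}\alpha-w(\gamma)=(\rho-w\rho)-w(\gamma)=-w\cdot\gamma$. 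Next I would check that $\psi_w$ is a $\g_0$-lowest weight vector, i.e.\ annihilated by $\g_{-\alpha_j}$ for each $\alpha_j\in\Delta^0\setminus I$. On the $V^{\gamma}$-factor this holds because $\eta_{-\alpha_j}\cdot v_{-w(\gamma)}$ would have weight $-w(w^{-1}\alpha_j+\gamma)$, and $w^{-1}\alpha_j>0$ puts $-w^{-1}\alpha_j-\gamma$ strictly below the lowest weight $-\gamma$ of $V^{\gamma}$. On the exterior factor, $\eta_{-\alpha_j}$ acts as a derivation and in each summand the modified term $\eta_{-\alpha_j}\cdot(\eta_{\alpha})_{\flat}$ has weight $\alpha-\alpha_j$; when this is a root it has positive $\I$-height (hence is positive) and $w^{-1}(\alpha-\alpha_j)$ is a sum of two negative roots, so $\alpha-\alpha_j\in\Phi_w$ already appears among the other factors and the wedge vanishes. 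Hence $\psi_w$ is a $\g_0$-lowest weight vector of weight $-w\cdot\gamma$ generating a copy of $V^{w\cdot\gamma}$ in $C^{l(w)}$, and by part (a) and the Laplacian identity that whole isotypic component lies in $\ker\Delta$, so $\psi_w$ is harmonic and, using multiplicity one, is the asserted representative. (One can also verify $\partial^*\psi_w=0$ directly: in the homology differential on $\bigwedge^{\bullet}\g_+\otimes V^{\gamma}$ the coefficient terms vanish since $\eta_{\alpha}\cdot v_{-w(\gamma)}=0$ for $\alpha\in\Phi_w$, and the bracket terms vanish since $[\eta_{\alpha_i},\eta_{\alpha_j}]\in\g_{\alpha_i+\alpha_j}$ with $\alpha_i+\alpha_j\in\Phi_w$ whenever it is a root, collapsing the wedge.)

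The main obstacle is part (a): the Laplacian identity together with its $\rho$-bookkeeping, and the multiplicity-one/degree statement, are the genuinely nontrivial ingredients (and the reason for leaning on Kostant). Granting part (a), part (b) is a finite computation driven by two structural properties of $\Phi_w=w(\Delta^-)\cap\Delta^+$ for minimal coset representatives: that $w^{-1}$ preserves positivity of the roots of $\g_0$, and that $w^{-1}$ sends the sums and differences of elements of $\Phi_w$ appearing above back into $\Delta^-$, so those roots land again in $\Phi_w$ and every unwanted wedge term collapses.
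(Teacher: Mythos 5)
The paper does not prove this statement: it is quoted as background (it is Kostant's algebraic version of Borel--Weil--Bott, available for instance as Theorem 3.3.5 in \cite{CapSlovak}), so there is no in-paper argument to compare against. Your outline is the standard Kostant proof and is essentially correct: the Laplacian identity reduces (a) to identifying the $\g_0$-types on which the scalar $\tfrac{1}{2}\bigl(\langle\gamma+\rho,\gamma+\rho\rangle-\langle\nu+\rho,\nu+\rho\rangle\bigr)$ vanishes, and (b) is the finite verification you describe, resting on the two properties of $\Phi_w$ for minimal coset representatives.

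One step in (a) is stated too strongly as written. From $\|\nu+\rho\|=\|\gamma+\rho\|$, regularity of $\gamma+\rho$ and $W$-invariance of the form do \emph{not} by themselves force $\nu+\rho\in W(\gamma+\rho)$: a vector of the correct norm need not lie on the orbit. The missing input is Kostant's lemma that for any $\g_0$-type occurring in $\bigwedge^{\bullet}\g_+\otimes V^{\gamma}$, the $\rho$-shift of its extreme weight lies in the convex hull of $W(\gamma+\rho)$; equality of norms then forces it to be an extreme point of that hull, hence an orbit point, and regularity gives uniqueness of $w$. Since you are already deferring the Laplacian identity and the multiplicity/degree count to Kostant, this is a presentational gap rather than a fatal one, but it should be cited or proved alongside the other nontrivial inputs rather than passed off as an immediate consequence of $W$-invariance. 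The rest of the sketch, including the direct verification that $\partial^*\psi_w=0$ and the collapse of the unwanted wedge terms via $\alpha-\alpha_j\in\Phi_w$ and $\alpha_i+\alpha_j\in\Phi_w$, is sound.
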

Now suppose $\g$ is simple with highest root $\mu$. Then the adjoint representation $\g\curvearrowright \g$ has lowest weight $-\mu.$ It follows from part (b) of Kostant's Borel-Weil-Bott that harmonic representatives of the lowest weight vectors of $H_{\C}^2(\g_-,\g)$ are of the form 
\[
(\eta_{\alpha_i})_{\flat}
\wedge (\eta_{s_i(\alpha_j)})_{\flat}
\otimes \eta_{-w(\mu)},
\]
of $\g_0$ weight $-w\cdot \mu.$ 
We define 
\[
W^{\p}_+(i):=\{w\in W^{\p}(i): -w\cdot \mu(E)>0\}.
\]
These are the Weyl group elements corresponding to $\g_0$ irreducible subrepresentations in $H_{\C}^2(\g_-,\g)$ of positive homogeneity. Therefore
\[
H_{\C}^2(\g_-,\g)_+\cong \bigoplus_{w\in W^{\p}_+(2)} V^{w\cdot \mu}.
\]

A \emph{morphism of representations} $(\g,V)\to (\g',V')$ is a Lie algebra homomorphism $\phi:\g\to \g'$ and a linear map $\psi:V\to V'$ such that $\psi(X\cdot v)=\phi(X)\cdot \psi(v).$ A morphism of representations induces maps $C^*(\g,V)\to C^*(\g',V')$ and $C_*(\g,V)\to C_*(\g',V')$ which are $\partial$ and $\partial^*$-equivariant, respectively. In particular this holds if $\g'=\g$ with $\phi=\mathrm{Id}_{\g}$ and $\psi:V\to V'$ is $\g$-equivariant. The following is a slight extension of parts (1) and (2) of Proposition 3.3.6 in \cite{CapSlovak}.
\begin{proposition}\label{prop::ComplexVsRealCohomology}
Let $\g$ be a real semisimple Lie algebra. 
\begin{enumerate}[(a)]
    \item If $\g$ acts on a complex vector space $V$ by complex endomorphisms,
    \[
   H^*_{\R}(\g,V)\cong  H^*_{\C}(\g^{\C},V).
    \]
    Furthermore, map is the restriction of the isomorphism $C^*_{\R}(\g,V) \cong C^*_{\C}(\g^{\C},V)$ to harmonic elements.
     \item If $\g$ acts on a real vector space $V$ by real endomorphisms then
    \[
    H_{\R}^*(\g,V)^{\C}\cong H^*_{\C}(\g^{\C},V^{\C}).
    \]
    Furthermore, this isomorphism is the restriction of the isomorphism $C^*_{\R}(\g,V)^{\C}\cong C^*_{\C}(\g^{\C},V^{\C})$ to harmonic elements.
\end{enumerate}
\end{proposition}
\begin{proof}
\begin{enumerate}[(a)]
\item In the following diagram, the top isomorphism replaces real tensor products with complex ones, while the bottom isomorphism passes to the unique $\C$-linear extension. The left and right isomorphism are induced by the real and complex Killing forms on $\g$ and $\g^{\C}$ respectively. The reader may check that the diagram is commutative. 
\[\begin{tikzcd}
	{C_*^{\mathbb{R}}(\mathfrak{g},V)} & {C_*^{\mathbb{C}}(\mathfrak{g}^{\mathbb{C}},V)} \\
	{C^*_{\mathbb{R}}(\mathfrak{g},V)} & {C^*_{\mathbb{C}}(\mathfrak{g}^{\mathbb{C}},V)}
	\arrow["\cong",  from=1-1, to=1-2]
	\arrow["\cong"', from=1-1, to=2-1]
	\arrow["\cong",  from=1-2, to=2-2]
	\arrow["\cong"', from=2-1, to=2-2]
\end{tikzcd}\]
The top isomorphism on chains is $\partial^*$-equivariant, while the bottom isomorphism on cochains is $\partial$-equivariant. Identifying chains and cochains, it follows from the definition $\Delta=\partial \partial^*+\partial^* \partial$ that the two isomorphisms are $\Delta$-equivariant. It follows that harmonic cochains in $C_{\R}^*(\g,V)$ are mapped bijectively onto harmonic cochains in $C_{\C}^*(\g^{\C},V)$.
\item 
The following diagram commutes. The top and bottom arrows are $\partial^*$ and $\partial$-equivariant because $V\to V^{\C}$ is $\g$-equivariant.
\[\begin{tikzcd}
	{C_*^{\mathbb{R}}(\mathfrak{g},V)} & {C_*^{\mathbb{R}}(\mathfrak{g},V^{\mathbb{C}})} \\
	{C^*_{\mathbb{R}}(\mathfrak{g},V)} & {C^*_{\mathbb{R}}(\mathfrak{g},V^{\mathbb{C}})}
	\arrow[from=1-1, to=1-2]
	\arrow["\cong"', from=1-1, to=2-1]
	\arrow["\cong", from=1-2, to=2-2]
	\arrow[from=2-1, to=2-2]
\end{tikzcd}\]

Therefore the following diagram is also commutative with top and bottom arrows $\partial^*$ and $\partial$-equivariant.
\[\begin{tikzcd}
	{C_*^{\mathbb{R}}(\mathfrak{g},V)^{\mathbb{C}}} & {C_*^{\mathbb{R}}(\mathfrak{g},V^{\mathbb{C}})} \\
	{C^*_{\mathbb{R}}(\mathfrak{g},V)^{\mathbb{C}}} & {C^*_{\mathbb{R}}(\mathfrak{g},V^{\mathbb{C}})}
	\arrow["\cong", from=1-1, to=1-2]
	\arrow["\cong"', from=1-1, to=2-1]
	\arrow["\cong", from=1-2, to=2-2]
	\arrow["\cong"', from=2-1, to=2-2]
\end{tikzcd}\]
  It follows that harmonic cochains in $C^*_{\R}(\g,V)^{\C}$ are taken to harmonic cochains in $C^*_{\R}(\g,V^{\C}).$ Composing with the canonical isomorphism to $C^*_{\C}(\g^{\C},V^{\C})$ takes harmonic cochains bijectively onto harmonic cochains by part (a).
\end{enumerate}
\end{proof}
\begin{proposition}\label{prop::RealCohomologyOfComplex}
The forgetful map $C^*_{\C}(\g_-,V)\to C^*_{\R}(\g_-,\g)$ takes harmonic elements to harmonic elements. Consequently, there is a $\g_0$-equivariant injection $H^*_{\C}(\g_-,V)\to H^*_{\R}(\g_-,V).$
\end{proposition}
\begin{proof}
The following diagram is commutative.
\[\begin{tikzcd}
	{\mathfrak{p}_+} & {\mathfrak{p}_+\oplus \overline{\mathfrak{p}_+}} & {\mathfrak{p}_+^{\mathbb{C}}} \\
	{\mathfrak{g}_-^{*}} & {(\mathfrak{g}_-\oplus \overline{\mathfrak{g}_-})^{*}} & {(\mathfrak{p}^{\mathbb{C}})^*} \\
	\arrow[hook, from=1-1, to=1-2]
	\arrow["\cong"', from=1-1, to=2-1]
	\arrow["\cong", from=1-2, to=1-3]
	\arrow["\cong"', from=1-2, to=2-2]
	\arrow["\cong"', from=1-3, to=2-3]
	\arrow[hook, from=2-1, to=2-2]
	\arrow["\cong"', from=2-2, to=2-3]
\end{tikzcd}\]
As a result, the first two squares of the following diagram are commutative. We checked the commutativity of the third square in the proof of Proposition \ref{prop::ComplexVsRealCohomology} (a).
\[\begin{tikzcd}
	{C^{\mathbb{C}}_*(\mathfrak{p}_+,V)} & {C^{\mathbb{C}}_*(\mathfrak{p}_+\oplus \overline{\mathfrak{p}_+},V)} & {C^{\mathbb{C}}_*(\mathfrak{p}^{\mathbb{C}}_+,V)} & {C^{\mathbb{R}}_{*}(\mathfrak{p}_+,V)} \\
	{C^*_{\mathbb{C}}(\mathfrak{g}_-,V)} & {C^*_{\mathbb{C}}(\mathfrak{g}_-\oplus \overline{\mathfrak{g}_-},V)} & {C^*_{\mathbb{C}}(\mathfrak{g}_-^{\mathbb{C}},V)} & {C^*_{\mathbb{R}}(\mathfrak{g}_-,V)}
	\arrow[hook, from=1-1, to=1-2]
	\arrow["\cong"', from=1-1, to=2-1]
	\arrow["\cong", from=1-2, to=1-3]
	\arrow["\cong"', from=1-2, to=2-2]
	\arrow["\cong", from=1-3, to=1-4]
	\arrow["\cong"', from=1-3, to=2-3]
	\arrow["\cong"', from=1-4, to=2-4]
	\arrow[hook, from=2-1, to=2-2]
	\arrow["\cong"', from=2-2, to=2-3]
	\arrow["\cong"', from=2-3, to=2-4]
\end{tikzcd}\]

The composite of the bottom arrows is the forgetful map. Let $\overline{\p}_+$ and $\overline{\g_-}$ act trivially on $V.$ The first two top arrows are $\partial^*$-equivariant because $(\p_+,V)\to (\p_+\oplus \overline{\p_+}, V)$ and $(\p_+\oplus \overline{\p_+},V)\to (\p_+^{\C},V)$ are morphisms of representations. We checked that the third map is $\partial^*$-equivariant in the proof of Proposition \ref{prop::ComplexVsRealCohomology} (a) The bottom arrows are $\partial$-equivariant for similar reasons. The claim follows.
\end{proof}

\subsection{Scaling Elements}
An element $H\in \a$ is called a \emph{scaling element} if 
\[
\hat{\Delta}(\g_0)=\{\alpha\in \hat{\Delta}: \alpha(H)=0\}.
\]
It follows that any scaling element $H\in \a$ centralizes the rootspaces of $\g_0,$ and must be contained in $\z(\g_0).$ There is always at least one scaling element, since the grading element $E\in \a$ is scaling. From the definition, it follows that for $\lambda\in \a^*,$ $\lambda^{\sharp}\in \a$ is scaling if and only if 
\[
\hat{\Delta}(\g_0)=\{\alpha\in \Delta: \langle \alpha,\lambda \rangle=0\}.
\]

Suppose $(\mathcal{G},\omega)$ is a Cartan geometry modeled on $(G,P)$ over $M.$ Suppose $\lambda:G_0\to \R_+$ is a homomorphism such that $(\lambda_*)^{\sharp}$ is a scaling element. There is an associated line bundle $\mathcal{L}^{\lambda}:=\mathcal{G}_0\times_{\lambda} \R_+.$ Automorphisms $\phi\in \mathrm{Aut}(\mathcal{G},\omega)$ act on $\mathcal{L}^{\lambda}$, and an automorphism $\phi$ is called $\lambda$-\emph{inessential} if there exists a global section $f:M\to \mathcal{L}^{\lambda}$ such that $\phi\cdot f=f$. On the other hand, $\phi$ is $\lambda$-\emph{essential} if it is not $\lambda$-inessential. Furthermore, an automorphism $\phi$ is \emph{essential} if it is $\lambda$-essential for every $\lambda$. The following is a consequence of Corollary 6.5 and Definition 7.11 in \cite{Erickson1}, as discussed in \cite{Erickson2}. 
\begin{proposition}\label{prop::EssentialityCriterion}
Suppose $\phi\in \mathrm{Aut}(\mathcal{G},\omega).$ If there exists $e\in \mathcal{G}$ such that $\phi(e)=ep$ for $p\in G_0$ and $p\not\in \ker \lambda$ for any $\lambda:G_0\to \R_{+}$ such that $(\lambda_*)^{\sharp}$ is a scaling element, then $\phi$ is essential.
\end{proposition}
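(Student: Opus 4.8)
The plan is to prove the contrapositive: I assume $\phi$ is \emph{not} essential and then produce a homomorphism $\lambda\colon G_0\to\R_+$ with $(\lambda_*)^{\sharp}$ a scaling element and $p\in\ker\lambda$, contradicting the hypothesis. By the definition of essentiality, failure of essentiality means there is \emph{some} such $\lambda$ for which $\phi$ is $\lambda$-inessential, i.e.\ there is a global section $f\colon M\to\mathcal{L}^{\lambda}$ with $\phi\cdot f=f$; this is the $\lambda$ I will contradict.

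First I would fix the base point $x:=\pi(e)\in M$ under the bundle projection $\pi\colon\mathcal{G}\to M$. Since $\phi(e)=ep$ with $p\in G_0\subseteq P$, the diffeomorphism $\overline\phi$ of $M$ induced by $\phi$ fixes $x$. Next, passing to the $G_0$-bundle $\mathcal{G}_0=\mathcal{G}/P_{+}$ and writing $\overline e\in\mathcal{G}_0$ for the image of $e$, normality of $P_{+}$ in $P$ gives $\phi(\overline e)=\overline e\cdot p$, where $p$ is now regarded as an element of $G_0=P/P_{+}$, so that $\lambda(p)$ is literally defined.

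Then I would evaluate the identity $\phi\cdot f=f$ at $x$. Because $\overline\phi(x)=x$, this forces $\phi(f(x))=f(x)$ inside the fiber $\mathcal{L}^{\lambda}_x$. Writing $f(x)=[\overline e,t]$ with $t\in\R_+$ and using the associated-bundle relation $[\overline e\cdot p,\,t]=[\overline e,\,\lambda(p)\,t]$, I compute $\phi(f(x))=[\phi(\overline e),t]=[\overline e\cdot p,t]=[\overline e,\lambda(p)\,t]$. Since the fibers of $\mathcal{L}^{\lambda}$ are $\R_+$-torsors, in particular $t\neq 0$, comparing with $f(x)=[\overline e,t]$ yields $\lambda(p)=1$, i.e.\ $p\in\ker\lambda$. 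This contradicts the hypothesis that $p\notin\ker\lambda$ for every scaling $\lambda$, so no such fixed section can exist; hence $\phi$ is $\lambda$-essential for every scaling $\lambda$, that is, $\phi$ is essential.

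The hard part is really only the second step: pinning down precisely how the bundle automorphism $\phi$ acts on the associated line bundle $\mathcal{L}^{\lambda}=\mathcal{G}_0\times_{\lambda}\R_+$, and verifying that in the frame $\overline e$ this action is scalar multiplication by $\lambda(p)$. This requires reconciling the conventions for the reduction $\mathcal{G}_0$, the induced automorphism it carries, and the associated-bundle construction, and it is exactly the bookkeeping packaged into Corollary 6.5 and Definition 7.11 of \cite{Erickson1} (as used in \cite{Erickson2}). Once that identification is in place, everything else lives in a single fiber and is a one-line computation.
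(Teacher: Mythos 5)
Your argument is correct. Note that the paper does not actually prove this proposition; it is stated as a consequence of Corollary 6.5 and Definition 7.11 of \cite{Erickson1}, so there is no in-paper proof to compare against. Your contrapositive argument is the natural one underlying that citation: the point $x=\pi(e)$ is fixed because $p\in G_0\subset P$, the induced automorphism of $\mathcal{G}_0=\mathcal{G}/P_+$ sends $\overline{e}$ to $\overline{e}\cdot p$, and evaluating a $\phi$-invariant section of $\mathcal{L}^{\lambda}=\mathcal{G}_0\times_{\lambda}\R_+$ in the frame $\overline{e}$ forces $\lambda(p)t=t$ with $t\in\R_+$, hence $\lambda(p)=1$, contradicting the hypothesis that $p\notin\ker\lambda$ for every scaling $\lambda$. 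The only step you leave implicit --- that the action of $\phi$ on $\mathcal{L}^{\lambda}$ is the one induced from the principal action on $\mathcal{G}_0$, so that in the frame $\overline{e}$ it is multiplication by $\lambda(p)^{\pm 1}$ (the sign of the exponent depending on the associated-bundle convention, which does not affect the conclusion) --- is exactly what the cited definitions of Erickson pin down, and you flag this honestly. No gaps.
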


\subsection{Curvature Trees and Harmonic Seeds}
Given a $\g$-valued $2$-form $\Omega\in \ker \Delta$, let $K_{\Omega}=\mathrm{Stab}_{G_0}(\Omega)$ and let $\k_{\Omega}\le \g_0$ be its Lie algebra. A form $\Omega\in (\ker \Delta)_+$ is said to have the \emph{Kruglikov-The property} if 
\begin{enumerate}[(1)]
    \item $\mathrm{im}(\Omega)\subset \g_-\oplus\mathfrak{k}_{\Omega}$,
    \item $\mathrm{im}(\Omega\wedge 1)\subset \ker \Omega.$
\end{enumerate}
A form $\Omega$ with the Kruglikov-The property is called a \emph{harmonic seed} if there exists a model geometry $(J_{\Omega},K_{\Omega})$ and an isomorphism of $K_{\Omega}$-representations $\psi: \mathfrak{j}_{\Omega}\to \g_-\oplus \mathfrak{k}_{\Omega}\le \g$ such that $\mathfrak{j}_{\Omega}$ is the Lie algebra of $J_{\Omega},$ $\psi|_{\k_{\Omega}}=1_{\mathfrak{k}_{\Omega}}$ and $J_{\Omega}/K_{\Omega}$ is simply connected. Let $\omega_{J_{\Omega}}$ and $\omega_P$ be the Maurer-Cartan forms of the Lie groups $J_{\Omega}$ and $P$. Then the Cartan geometry $\mathcal{G}_{\Omega}:=J_{\Omega}\times_{K_{\Omega}}P$ modeled on $(G,P)$ over $J_{\Omega}/K_{\Omega}$ with Cartan connection given by
\[
\omega_{\Omega}=\mathrm{Ad}_{p^{-1}}\psi(\omega_{J_{\Omega}})+\omega_P
\] 
is called the \emph{curvature tree} grown from $\Omega.$ For $j\in J_{\Omega},$ let $L_j:\mathcal{G}_{\Omega}\to \mathcal{G}_{\Omega}$ denote left-action by $j.$ This transformation is right $P$-equivariant. Also,
\[
L_j^*\omega_{\Omega}
=\mathrm{Ad}_{p^{-1}}(\psi(L_j^* \omega_{J_{\Omega}}))+\omega_P
=\mathrm{Ad}_{p^{-1}}(\psi(\omega_{J_{\Omega}}))+\omega_P
=\omega_{\Omega},
\]
so $J_{\Omega}\le \mathrm{Aut}(\mathcal{G}_{\Omega},\omega_{\Omega}).$ The following is Theorem 3.4 in \cite{Erickson2}.
\begin{theorem}\label{thm::IsHarmonicSeed}
Let $\mathfrak{b}_-$ be the nilpotent subalgebra of $\g$ generated by negative restricted root spaces. If $\Omega\in (\ker \Delta)_+$ satisfies the Kruglikov-The property and $\mathrm{im}(\Omega)\subset \mathfrak{b}_-$ then $\Omega$ is a harmonic seed.
\end{theorem}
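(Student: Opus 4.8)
The plan is to realize $\mathfrak{j}_\Omega$ as the vector space $\g_-\oplus\k_\Omega$ carrying a bracket deformed by $\Omega$, to take $\psi$ to be the inclusion $\g_-\oplus\k_\Omega\hookrightarrow\g$, to verify the Jacobi identity using harmonicity of $\Omega$ together with the two Kruglikov--The conditions, and finally to build the group $J_\Omega$ out of the nilpotent subalgebra $\mathfrak{j}_\Omega\cap\mathfrak{b}_-$; this last step is where the hypothesis $\im(\Omega)\subseteq\mathfrak{b}_-$ is genuinely used, and where I expect the main difficulty to lie.

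First I would set $\mathfrak{j}_\Omega:=\g_-\oplus\k_\Omega$ as a vector space and define, for $v,w\in\mathfrak{j}_\Omega$,
\[
[v,w]_{\mathfrak{j}_\Omega}:=[v,w]_{\g}-\Omega(\mathrm{pr}_-v,\mathrm{pr}_-w),
\]
where $\mathrm{pr}_-\colon\g_-\oplus\k_\Omega\to\g_-$ is the projection; Kruglikov--The condition (1), namely $\im(\Omega)\subseteq\g_-\oplus\k_\Omega$, guarantees this takes values in $\g_-\oplus\k_\Omega$, and skew-symmetry is immediate. Observe that $\g_-\oplus\k_\Omega$ is already a subalgebra of $\g$ for the undeformed bracket, so $\mathfrak{j}_\Omega$ is a deformation of it by the strictly filtration-raising cochain $\Omega$. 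Taking $\psi\colon\mathfrak{j}_\Omega\to\g_-\oplus\k_\Omega\le\g$ to be the identity of the underlying vector space makes $\psi$ automatically a $K_\Omega$-equivariant isomorphism (for the obvious $K_\Omega$-action on the source, which is by automorphisms of $[\cdot,\cdot]_{\mathfrak{j}_\Omega}$ precisely because $K_\Omega$ stabilizes $\Omega$) with $\psi|_{\k_\Omega}=1_{\k_\Omega}$; so everything reduces to checking that $[\cdot,\cdot]_{\mathfrak{j}_\Omega}$ is a Lie bracket and that a compatible Lie group $(J_\Omega,K_\Omega)$ with $J_\Omega/K_\Omega$ simply connected exists.

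For the Jacobi identity I would split triples by the number of arguments lying in $\k_\Omega$. If at least one does, one of the two nested brackets never involves the $\Omega$-correction, and the identity collapses to the Jacobi identity of $\g$ together with the infinitesimal invariance $X\cdot\Omega=0$ for $X\in\k_\Omega$, that is $[X,\Omega(\cdot,\cdot)]_{\g}=\Omega([X,\cdot]_{\g},\cdot)+\Omega(\cdot,[X,\cdot]_{\g})$, which holds by definition of $\k_\Omega$ as the Lie algebra of $\mathrm{Stab}_{G_0}(\Omega)$. The remaining case, all three arguments $X,Y,Z$ in $\g_-$, unwinds --- using $[\g_-,\g_-]\subseteq\g_-$ --- to the vanishing of
\[
\sum_{\mathrm{cyc}}\bigl([\Omega(X,Y),Z]_{\g}+\Omega([X,Y]_{\g},Z)\bigr)-\sum_{\mathrm{cyc}}\Omega\bigl(\mathrm{pr}_-\Omega(X,Y),Z\bigr).
\]
The first sum is, up to sign, $d\Omega$ evaluated on $X,Y,Z$, where $d$ is the Chevalley--Eilenberg differential of $\g_-$ with values in $\g$; it vanishes because a harmonic form is in particular closed. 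The second sum is $\Omega$ applied to $(\Omega\wedge 1)(X\wedge Y\wedge Z)$, so it vanishes by Kruglikov--The condition (2), $\im(\Omega\wedge 1)\subseteq\ker\Omega$. Hence $\mathfrak{j}_\Omega$ is a Lie algebra; granting $J_\Omega$, the Cartan geometry $\mathcal{G}_\Omega=J_\Omega\times_{K_\Omega}P$ with the stated Cartan form is well defined and $J_\Omega\le\mathrm{Aut}(\mathcal{G}_\Omega,\omega_\Omega)$ by the computation already in the excerpt.

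It remains to produce $(J_\Omega,K_\Omega)$ with $J_\Omega/K_\Omega$ simply connected; this is where $\im(\Omega)\subseteq\mathfrak{b}_-$ enters and is, I expect, the crux. Set $\mathfrak{n}_\Omega:=\mathfrak{j}_\Omega\cap\mathfrak{b}_-=\g_-\oplus(\k_\Omega\cap\mathfrak{b}_-)$. Since $\im(\Omega)\subseteq\mathfrak{b}_-$ and $\g_-\subseteq\mathfrak{b}_-$, the subspace $\mathfrak{n}_\Omega$ is a subalgebra of $\mathfrak{j}_\Omega$; and it is nilpotent, because the $\ad(E)$-grading (with $\k_\Omega\cap\mathfrak{b}_-$ placed in degree $0$) exhibits $\mathfrak{n}_\Omega$ as a filtered Lie algebra whose associated graded --- $\Omega\in(\ker\Delta)_+$ being strictly degree-raising --- is the subalgebra $\g_-\oplus(\k_\Omega\cap\mathfrak{b}_-)$ of the nilpotent $\mathfrak{b}_-$, hence nilpotent, and a filtered Lie algebra with nilpotent associated graded is nilpotent. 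Consequently the simply connected group $N_\Omega$ with Lie algebra $\mathfrak{n}_\Omega$ contains $\exp(\k_\Omega\cap\mathfrak{b}_-)$ as a closed subgroup, with $N_\Omega/\exp(\k_\Omega\cap\mathfrak{b}_-)$ diffeomorphic to the contractible space $\g_-$. One then assembles $N_\Omega$ and $K_\Omega$ --- which overlap exactly in $\exp(\k_\Omega\cap\mathfrak{b}_-)$, with $\k_\Omega$ a genuine subalgebra of $\mathfrak{j}_\Omega$ for the ambient bracket and $K_\Omega\le G_0$ acting on $\mathfrak{n}_\Omega$ compatibly with the invariance of $\Omega$ --- into a single Lie group $J_\Omega$ with Lie algebra $\mathfrak{j}_\Omega$ in which $K_\Omega$ (after passing to identity components and appropriate covers, harmless for the conclusion) is closed with $J_\Omega/K_\Omega\cong N_\Omega/\exp(\k_\Omega\cap\mathfrak{b}_-)$ contractible; then $\Omega$ is a harmonic seed. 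I expect this gluing --- equivalently, closedness of the connected subgroup with Lie algebra $\k_\Omega$ in the simply connected group with Lie algebra $\mathfrak{j}_\Omega$ --- to be the hardest point, since without the constraint $\im(\Omega)\subseteq\mathfrak{b}_-$ there is no reason for that subgroup to be closed or for the quotient to be simply connected, which is precisely the gap between the local model of Kruglikov--The and a global one.
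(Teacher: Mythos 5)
First, a caveat on the comparison: the paper does not prove Theorem \ref{thm::IsHarmonicSeed} at all --- it is quoted as Theorem 3.4 of \cite{Erickson2} --- so there is no in-paper argument to measure you against, only the construction your proof would have to reproduce. Your overall strategy is the right one and matches the Kruglikov--The/Erickson construction: take $\mathfrak{j}_\Omega=\g_-\oplus\k_\Omega$ with the bracket deformed by $\Omega$, let $\psi$ be the identity of the underlying vector space, obtain Jacobi from closedness of $\Omega$ (harmonic implies closed) together with Kruglikov--The condition (2) for triples in $\g_-$, and from the infinitesimal $\k_\Omega$-invariance of $\Omega$ for mixed triples. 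That part of the write-up is essentially correct.

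The gaps are in the integration step, in two places. First, your nilpotency argument for $\mathfrak{n}_\Omega=\g_-\oplus(\k_\Omega\cap\b_-)$ does not work as stated. The only filtration compatible with the deformed bracket relative to the $\ad(E)$-grading is the bounded increasing one $F_m=(\k_\Omega\cap\b_-)\oplus\g_{-1}\oplus\cdots\oplus\g_{-m}$ (the decreasing one fails precisely because $\Omega$ has positive homogeneity and raises degree), and for a bounded filtration the implication ``associated graded nilpotent $\Rightarrow$ nilpotent'' is false: in $\sl_2=\langle f,h,e\rangle$ the chain $F_0=\langle f\rangle\subset F_1=\langle f,h\rangle\subset F_2=\sl_2$ satisfies $[F_a,F_b]\subset F_{a+b}$ and has abelian associated graded. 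So, as written, you have used the hypothesis $\im(\Omega)\subset\b_-$ only to get closure of $\mathfrak{n}_\Omega$ under the deformed bracket, which is not enough; the nilpotency (or at least integrability to a group with contractible quotient) must come from structure intrinsic to $\b_-$, e.g. the grading by total height over all simple restricted roots, and even there the restricted-weight bookkeeping for the $\Omega$-term needs to be checked. Second, the assembly of $J_\Omega$ out of $N_\Omega$ and $K_\Omega$ is only gestured at, and the escape hatch ``after passing to identity components and appropriate covers, harmless for the conclusion'' is not harmless: the definition of harmonic seed and the subsequent bundle $\mathcal{G}_\Omega=J_\Omega\times_{K_\Omega}P$ require the possibly disconnected group $K_\Omega=\mathrm{Stab}_{G_0}(\Omega)$ itself to sit inside both $J_\Omega$ and $P$, with $\mathrm{Ad}_{J_\Omega}$ restricted to $K_\Omega$ agreeing with the $G_0$-action so that $\psi$ is $K_\Omega$-equivariant; and since $\mathfrak{n}_\Omega\cap\k_\Omega=\k_\Omega\cap\b_-$ need not vanish, $\mathfrak{j}_\Omega$ is not a semidirect product of $\mathfrak{n}_\Omega$ and $\k_\Omega$, so the group has to be constructed, not postulated. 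You correctly identify this as the crux, but identifying the crux is not the same as resolving it.
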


\subsection{Compact Quotients of Curvature Trees}
Suppose $\Omega$ is a harmonic seed of restricted weight $\tau\in \a^*$ for which $\tau^{\sharp}\in \a$ is not a scaling element. Let $(\mathcal{G},\omega_{\Omega})$ be the curvature tree modeled on $(G,P)$ grown from $\Omega$. By Proposition 4.1 of \cite{Erickson2}, there exists $\alpha\in \hat{\Delta}^+(\p^+)$ and $R\in \g_0^{ss}\cap \a$ such that $a_0:=\alpha^{\sharp}+R\in \ker \tau.$ Then \[
a_0\cdot \Omega=\tau(a_0)\Omega=0.
\]
It follows that $\mathrm{exp}(a_0)\in K_{\Omega}\le J_{\Omega}\le \mathrm{Aut}(\mathcal{G},\omega_{\Omega})$. Fix $\lambda:G_0\to \R_+$ such that $\lambda_*^{\sharp}\in \a$ is a scaling element. We have
    \begin{align*}
    \lambda_*(a_0)
    &=\lambda_*(\alpha^{\sharp})+\lambda_*(R)\\
    &=\alpha(\lambda_*^{\sharp})+\langle \lambda_*^{\sharp},R \rangle\\
    &=\alpha(\lambda_*^{\sharp})
    \neq 0
    \end{align*}
    because $\lambda_*^{\sharp}\in \z(\g_0),$ which is Killing-orthogonal to $R\in\g_0^{ss},$ and because $\lambda_*^{\sharp}$ is a scaling element.
    Then
    \[
    \lambda(\mathrm{exp}(a_0))
    =\mathrm{exp}(\lambda_*(a_0))
    \neq 1,
    \]
    so $\mathrm{exp}(a_0)\not\in \ker \lambda$. Acting on the left by $\mathrm{exp}(a_0)\in G_0$ takes $e\mapsto e \cdot\mathrm{exp}(a_0).$ This transformation is essential by Proposition \ref{prop::EssentialityCriterion}. We have shown the following.
    \begin{proposition}
If $\Omega$ is a harmonic seed of weight $\tau$ for which $\tau^{\sharp}$ is not a scaling element, then the curvature tree grown from $\Omega$ admits an essential transformation.
    \end{proposition}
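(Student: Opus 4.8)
The plan is to produce an explicit essential automorphism of the curvature tree $(\mathcal{G},\omega_\Omega)$ by exhibiting an element of $K_\Omega$, acting on the left, that escapes the kernel of every scaling homomorphism, and then invoking Proposition~\ref{prop::EssentialityCriterion}. The argument is precisely the one carried out in the discussion immediately preceding the statement, so I only record the steps.

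First I would use the hypothesis that $\tau^\sharp$ is not a scaling element. By Proposition~4.1 of \cite{Erickson2}, this produces a restricted root $\alpha\in\hat\Delta^+(\p^+)$ and an element $R\in\g_0^{ss}\cap\a$ with $a_0:=\alpha^\sharp+R\in\ker\tau$. This is the only genuinely non-formal input, and the sole place where the non-scaling hypothesis enters. Since $\Omega$ has restricted weight $\tau$, we get $a_0\cdot\Omega=\tau(a_0)\Omega=0$, hence $a_0\in\k_\Omega$ and $\exp(a_0)\in K_\Omega\le J_\Omega\le\mathrm{Aut}(\mathcal{G},\omega_\Omega)$, the last inclusion being the identity $L_j^*\omega_\Omega=\omega_\Omega$ already verified for arbitrary $j\in J_\Omega$.

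Next I would check the hypothesis of Proposition~\ref{prop::EssentialityCriterion}. Fix any homomorphism $\lambda:G_0\to\R_+$ with $\lambda_*^\sharp$ a scaling element. Via the musical isomorphism,
\[
\lambda_*(a_0)=\alpha(\lambda_*^\sharp)+\langle\lambda_*^\sharp,R\rangle;
\]
the second term vanishes because $\lambda_*^\sharp\in\z(\g_0)$ is Killing-orthogonal to $R\in\g_0^{ss}$ by Proposition~\ref{prop::ComplementarySpaces}, and the first term is nonzero because $\alpha\in\hat\Delta^+(\p^+)$ has positive $\I$-height, so $\alpha\notin\hat\Delta(\g_0)$ and hence $\alpha(\lambda_*^\sharp)\neq0$ by the characterization of scaling elements. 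Thus $\lambda(\exp(a_0))=\exp(\lambda_*(a_0))\neq1$, so $\exp(a_0)\notin\ker\lambda$ for every such $\lambda$. Since left-translation by $\exp(a_0)\in G_0$ is a (right $P$-equivariant) automorphism of $(\mathcal{G},\omega_\Omega)$ sending a suitable $e\in\mathcal{G}$ to $e\cdot\exp(a_0)$, Proposition~\ref{prop::EssentialityCriterion} applies and shows this automorphism is essential. The only obstacle worth flagging is the appeal to Proposition~4.1 of \cite{Erickson2} for the existence of $a_0$; everything downstream is the routine computation above.
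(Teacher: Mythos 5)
Your proposal is correct and follows exactly the argument the paper gives in the discussion immediately preceding the proposition: invoke Proposition 4.1 of \cite{Erickson2} to produce $a_0=\alpha^{\sharp}+R\in\ker\tau$, observe $\exp(a_0)\in K_{\Omega}\le\mathrm{Aut}(\mathcal{G},\omega_{\Omega})$, show $\lambda(\exp(a_0))\neq 1$ for every scaling $\lambda$ via Killing-orthogonality of $\z(\g_0)$ and $\g_0^{ss}$, and conclude by Proposition \ref{prop::EssentialityCriterion}. No gaps; this is the paper's proof.
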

Under some additional algebraic assumptions, Erickson removes a point from the manifold and quotients by dilation-like transformations to get a compact manifold admitting essential transformations. This process is is similar in spirit to the construction of the Hopf manifold $S^1\times S^{n-1}$ by quotienting $\R^n\backslash \{0\}$ by a discrete group of dilations. The following is a slight modification of Theorem 4.2 of \cite{Erickson2} for which Erickson's proof is still valid.
\begin{theorem}\label{thm::EssentialCompact}
Suppose $\Omega$ is a harmonic seed of weight $\tau$ for which $\tau^{\sharp}\in \a$ is not a scaling element and with constants $a_0:=\alpha^{\sharp}
+R\in \ker \tau\cap \ker \nu_0$ for some $R\in \g_0^{ss}\cap \a$ and $\alpha,\nu_0\in \hat{\Delta}^+(\p^+)$, and $c_0\in \ker(\tau)$ such that $\nu(c_0)>0$ for all $\nu\in \hat{\Delta}^+(\p^+).$ Then there is a one parameter family of essential automorphisms on a nonflat, locally homogeneous, regular, normal Cartan geometry modeled on $(G,P)$ on a manifold diffeomorphic to $S^1 \times S^{\dim(\g_-)-1}.$
\end{theorem}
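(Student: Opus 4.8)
The plan is to realize the required geometry as a Hopf-type quotient of the curvature tree grown from $\Omega$, and to obtain the essential flow as the descent of a one-parameter subgroup of $K_{\Omega}$. First I would form the curvature tree $\mathcal{G}_{\Omega}=J_{\Omega}\times_{K_{\Omega}}P$ over $M_0:=J_{\Omega}/K_{\Omega}$ with Cartan form $\omega_{\Omega}$, available since $\Omega$ is a harmonic seed. By construction this is a regular, normal parabolic Cartan geometry modeled on $(G,P)$; it is globally homogeneous, hence locally homogeneous, because $J_{\Omega}$ acts on it by automorphisms through left translations with $M_0$ as its orbit; and it is nonflat, since its harmonic curvature is represented by the nonzero form $\Omega$, which obstructs flatness by Theorem 3.1.12 of \cite{CapSlovak}. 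Write $o:=eK_{\Omega}\in M_0$ for the distinguished point, so that $T_oM_0\cong\g_-$ via $\psi$.

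Next I would single out the contracting automorphism. Since $c_0\in\ker\tau$, we have $c_0\cdot\Omega=\tau(c_0)\Omega=0$, so $c_0\in\k_{\Omega}$ and $\Phi:=L_{\exp(c_0)}$ is an automorphism of $(\mathcal{G}_{\Omega},\omega_{\Omega})$ fixing $o$, whose differential at $o$ is $\exp(\ad c_0)|_{\g_-}$, with eigenvalues $e^{-\nu(c_0)}\in(0,1)$ because $\nu(c_0)>0$ for every $\nu\in\hat{\Delta}^+(\p^+)$. From here I would invoke Erickson's Theorem 4.2 of \cite{Erickson2}: globalizing the Kruglikov--The picture and using nilpotency of $\g_-$ together with this contraction property, one shows that $M_0$ is diffeomorphic to $\R^{\dim(\g_-)}$, that $o$ is the unique fixed point of $\Phi$, and that the infinite cyclic group generated by $\Phi$ acts freely and properly discontinuously on $M_0\setminus\{o\}$ with quotient diffeomorphic to $S^1\times S^{\dim(\g_-)-1}$. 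Since $\Phi$ is an automorphism, $(\mathcal{G}_{\Omega},\omega_{\Omega})$ restricts to $M_0\setminus\{o\}$ and descends to a Cartan geometry $(\overline{\mathcal{G}},\overline{\omega})$ on this closed quotient manifold $\overline{M}$; it is again modeled on $(G,P)$, regular, normal, nonflat (the curvature is unchanged on the open set), and locally homogeneous, since $\overline{\mathcal{G}}$ is a covering quotient of an open subset of the homogeneous $\mathcal{G}_{\Omega}$.

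Finally I would produce the essential one-parameter family. As $a_0=\alpha^{\sharp}+R\in\ker\tau$ as well, $a_0\in\k_{\Omega}$, so $\exp(sa_0)\in K_{\Omega}$ fixes $o$ for every $s\in\R$; and since $a_0,c_0\in\a$ commute, the flow $\phi_s:=L_{\exp(sa_0)}$ commutes with $\Phi$, preserves $M_0\setminus\{o\}$, and descends to a one-parameter family $\overline{\phi}_s$ of automorphisms of $(\overline{\mathcal{G}},\overline{\omega})$. To apply the essentiality criterion, Proposition \ref{prop::EssentialityCriterion}, on $\overline{M}$, I must find a frame that $\overline{\phi}_s$ moves within its $G_0$-fiber by an element lying outside every scaling kernel; this is where the hypothesis $a_0\in\ker\nu_0$ enters. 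For a small nonzero $\xi\in\g_{-\nu_0}\subset\g_-$, transported into $\mathfrak{j}_{\Omega}$ by $\psi^{-1}$, we have $[a_0,\xi]=-\nu_0(a_0)\xi=0$, so $\exp_{J_{\Omega}}(sa_0)$ commutes with $\exp_{J_{\Omega}}(\xi)$; as $\xi\notin\k_{\Omega}$, the point $p_1:=\exp_{J_{\Omega}}(\xi)K_{\Omega}$ is distinct from $o$ and is fixed by every $\phi_s$. Choosing the frame $u:=[\exp_{J_{\Omega}}(\xi),1]\in\mathcal{G}_{\Omega}$ over $p_1$ and using $\exp_{J_{\Omega}}(sa_0)\in K_{\Omega}$,
\[
\phi_s(u)=[\exp_{J_{\Omega}}(sa_0)\exp_{J_{\Omega}}(\xi),\,1]=[\exp_{J_{\Omega}}(\xi),\,\exp(sa_0)]=u\cdot\exp(sa_0),
\]
with $\exp(sa_0)\in G_0$; since $p_1\neq o$ this descends to $\overline{u}\in\overline{\mathcal{G}}$ with $\overline{\phi}_s(\overline{u})=\overline{u}\cdot\exp(sa_0)$. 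For any homomorphism $\lambda:G_0\to\R_+$ with $\lambda_*^{\sharp}$ a scaling element, $\lambda(\exp(sa_0))=\exp(s(\alpha(\lambda_*^{\sharp})+\langle\lambda_*^{\sharp},R\rangle))=\exp(s\,\alpha(\lambda_*^{\sharp}))$, using $\langle\lambda_*^{\sharp},R\rangle=0$ (Proposition \ref{prop::ComplementarySpaces}, since $\lambda_*^{\sharp}\in\z(\g_0)$ and $R\in\g_0^{ss}$) and $\alpha(\lambda_*^{\sharp})\neq0$ (because $\alpha\in\hat{\Delta}^+(\p^+)$ lies outside $\hat{\Delta}(\g_0)$ while $\lambda_*^{\sharp}$ is scaling). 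Hence $\exp(sa_0)\notin\ker\lambda$ for $s\neq0$, and Proposition \ref{prop::EssentialityCriterion} shows each $\overline{\phi}_s$ with $s\neq0$ is essential, completing the proof.

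The main obstacle is the dynamical and topological content of the second step: showing that the contracting automorphism $\Phi$ of $M_0\cong\R^{\dim(\g_-)}$ has $o$ as its only fixed point and yields an honest closed quotient diffeomorphic to $S^1\times S^{\dim(\g_-)-1}$ to which the geometry descends, with regularity, normality and local homogeneity intact. This is exactly Erickson's Theorem 4.2 of \cite{Erickson2}, whose proof applies here verbatim; the one new ingredient needed for the one-parameter refinement is the elementary observation that $\ker\nu_0$ furnishes a curve of fixed points of $\phi_s$ landing away from the removed point $o$.
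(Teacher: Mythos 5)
Your proposal is correct and follows essentially the same route as the paper, which itself gives no independent proof but simply observes that this statement is a slight modification of Theorem 4.2 of \cite{Erickson2} whose proof remains valid; your reconstruction of how $c_0$ yields the contracting Hopf-type quotient, how $a_0\in\ker\nu_0$ furnishes a fixed frame away from the removed point, and how Proposition \ref{prop::EssentialityCriterion} then gives essentiality matches the surrounding discussion in the paper (the computation $\lambda_*(a_0)=\alpha(\lambda_*^{\sharp})\neq 0$ appears there almost verbatim). The only small caveat is that you fold the diffeomorphism $M_0\cong\R^{\dim(\g_-)}$ into your invocation of Theorem 4.2, whereas the paper's remark points out that Erickson's proof of that theorem does not actually depend on this diffeomorphism (which is what allows dropping the lowest-weight hypothesis); this does not affect the correctness of your argument here.
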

\begin{remark}
In Theorem 3.5, Erickson proves that for $\Omega$ a lowest weight vector of weight $\tau$ (and satisfying a couple additional properties), the curvature tree $J_{\Omega}$ grown from $\Omega$ has a base space $J_{\Omega}/K_{\Omega}$ diffeomorphic to $\R^n$. It is worth commenting that Erickson's proof of Theorem 4.2 does not depend on this diffeomorphism, and so does not require $\tau$ to be a lowest weight, so long as $\Omega$ is a harmonic seed.
\end{remark}
There is a straightforward proof that every Yamaguchi nonrigid, parabolic model geometry modeled on a homogeneous space for a simple group of real rank at least $3$ has a constant $c_0$ satisfying the requirements. This is Theorem \ref{thm::C0Exists}. However, finding an appropriate $a_0$ was much more difficult for us, and in fact there is one infinitesimal model geometry, $(\sl_4(\H),P_{2,6})$, where a constant $a_0$ satisfying the requirements does not exist for any lowest weight $\Omega$. In this case, we were forced to seek a non-lowest weight harmonic seed. The following two propositions, Proposition 4.3 and Proposition 4.4 of \cite{Erickson2}, facilitate the proof of existence of lowest weights admitting a constant $a_0$ in all other cases.
\begin{proposition}\label{prop::InCenter}
Suppose $\tau^{\sharp}\in \z(\g_0)\cap \a$ is not a scaling element. Then there exist restricted roots $\alpha,\nu_0\in \Delta^+(\p+)$ and $R\in \g_0^{ss}\cap \a$ such that $a_0:=\alpha^{\sharp}+R\in \ker \tau\cap \ker \nu_0.$ 
\end{proposition}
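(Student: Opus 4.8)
The plan is to produce the required roots $\alpha,\nu_0$ and the element $R\in\g_0^{ss}\cap\a$ directly from the hypothesis that $\tau^{\sharp}$ lies in $\z(\g_0)\cap\a$ but is not scaling. The starting observation is that, by the characterization of scaling elements via Killing pairing, $\tau^{\sharp}$ fails to be scaling exactly when there exists some $\gamma\in\hat\Delta$ with $h_{\I}(\gamma)\neq 0$ yet $\langle\gamma,\tau\rangle=0$; since $\tau^{\sharp}\in\z(\g_0)$ already kills everything in $\hat\Delta(\g_0)$, such a $\gamma$ must satisfy $h_{\I}(\gamma)>0$ (replacing $\gamma$ by $-\gamma$ if needed), i.e.\ $\gamma\in\hat\Delta^+(\p^+)$. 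This is the root I would try to massage into $\alpha$. First I would record that $\mu|_{\a}\in\hat\Delta^+(\p^+)$ always (stated in the excerpt), and consider the two cases according to whether $\langle\mu|_{\a},\tau\rangle$ vanishes or not.

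Next, I would set up the decomposition that separates the ``central'' and ``semisimple'' directions in $\a$. Write any candidate root $\beta\in\hat\Delta^+(\p^+)$ as $\beta^{\sharp}=\beta^{\sharp}_{\z}+\beta^{\sharp}_{ss}$ under the Killing-orthogonal splitting $\a=(\z(\g_0)\cap\a)\oplus(\g_0^{ss}\cap\a)$ of Proposition~\ref{prop::ComplementarySpaces}; note $\langle\beta,\tau\rangle=\langle\beta^{\sharp}_{\z},\tau^{\sharp}\rangle$ since $\tau^{\sharp}\in\z(\g_0)$. The idea is: pick $\alpha$ to be a root in $\hat\Delta^+(\p^+)$ with $\langle\alpha,\tau\rangle=0$ (e.g.\ the $\gamma$ above, or $\mu|_{\a}$ in the favorable case), and then choose $R:=-\alpha^{\sharp}_{ss}+R'$ for a suitable $R'\in\g_0^{ss}\cap\a$ so that $a_0=\alpha^{\sharp}+R=\alpha^{\sharp}_{\z}+R'$. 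Then automatically $\tau(a_0)=\langle\alpha^{\sharp}_{\z},\tau^{\sharp}\rangle=\langle\alpha,\tau\rangle=0$ regardless of $R'$, so the freedom in $R'$ is entirely available to arrange $\nu_0(a_0)=0$ for some $\nu_0\in\hat\Delta^+(\p^+)$.

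It then remains to find $\nu_0\in\hat\Delta^+(\p^+)$ and $R'\in\g_0^{ss}\cap\a$ with $\nu_0(\alpha^{\sharp}_{\z}+R')=0$, i.e.\ $\langle\nu_0^{\sharp}_{ss},R'\rangle=-\nu_0(\alpha^{\sharp}_{\z})=-h_{\I}(\nu_0)\cdot(\text{const})$ after using the grading element relation. If $\alpha^{\sharp}_{\z}$ is not itself proportional to the grading element $E$ restricted to the relevant span, I would instead observe that some positive root $\nu_0$ must be non-orthogonal to $\g_0^{ss}\cap\a$ (because $\p^+$ generates enough of $\g$, or concretely because the simple restricted roots in $\I$ have $\sharp$'s with nonzero $\g_0^{ss}$-component whenever $\g_0^{ss}\cap\a\neq 0$; if $\g_0^{ss}\cap\a=0$ the statement degenerates and one argues separately, possibly forcing the $(\sl_4(\H),P_{2,6})$ exception noted in the excerpt), so the linear functional $R'\mapsto\langle\nu_0^{\sharp}_{ss},R'\rangle$ on $\g_0^{ss}\cap\a$ is nonzero and hits $-\nu_0(\alpha^{\sharp}_{\z})$. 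Choosing such an $R'$ completes the construction, and then $a_0=\alpha^{\sharp}+R\in\ker\tau\cap\ker\nu_0$ with $R\in\g_0^{ss}\cap\a$ as required.

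The main obstacle I anticipate is the degenerate configuration where every $\nu_0\in\hat\Delta^+(\p^+)$ with $\langle\nu_0,\tau\rangle$ potentially useful turns out to be Killing-orthogonal to all of $\g_0^{ss}\cap\a$, or where $\g_0^{ss}\cap\a$ is simply too small (e.g.\ zero), so that no nonzero $R$ can be produced; this is presumably exactly the phenomenon behind the excluded case $(\sl_4(\H),P_{2,6})$ and behind the remark that non-lowest-weight seeds are sometimes needed. Handling it requires either a dimension count on $\hat\Delta^+(\p^+)$ versus $\dim(\g_0^{ss}\cap\a)$ showing the orthogonality cannot be universal, or a case-by-case check against the list in Appendix B for the real-rank-$\ge 3$ geometries; I would expect the clean linear-algebra argument to suffice for all but finitely many entries on Yamaguchi's list, with those handled by inspection.
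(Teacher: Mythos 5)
The paper does not actually prove this statement: it is quoted verbatim as Proposition~4.3 of Erickson's paper \cite{Erickson2} and used as a black box, so there is no internal proof to compare yours against. Judged on its own, your skeleton gets the two easy steps right. The failure of scaling for a central $\tau^{\sharp}$ does indeed produce $\gamma\in\hat{\Delta}^+(\p^+)$ with $\langle\gamma,\tau\rangle=0$ (direction (a) of the failure is ruled out because $\tau^{\sharp}\in\z(\g_0)$ annihilates $\hat{\Delta}(\g_0)$), and once $\alpha$ is chosen in $\ker\tau$, the condition $a_0\in\ker\tau$ is automatic for \emph{every} $R\in\g_0^{ss}\cap\a$, since $\tau(R)=\langle\tau^{\sharp},R\rangle=0$ by Proposition~\ref{prop::ComplementarySpaces}; your detour through the splitting $\alpha^{\sharp}=\alpha^{\sharp}_{\z}+\alpha^{\sharp}_{ss}$ and the grading element is harmless but unnecessary, as solvability of $\langle\nu_0{}^{\sharp}_{ss},R'\rangle=-\nu_0(\alpha^{\sharp}_{\z})$ needs only $\nu_0{}^{\sharp}_{ss}\neq 0$ and positive definiteness of the Killing form on $\a$.

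The genuine gaps are all in the last step, which is where the content of the proposition lives. First, the existence of $\nu_0\in\hat{\Delta}^+(\p^+)$ with nonzero $\g_0^{ss}\cap\a$ component is not justified by your parenthetical claim (not every simple restricted root in $\hat{I}$ has this property); what is true, by connectedness of the restricted Dynkin diagram, is that when $\varnothing\neq\hat{I}\neq\hat{\Delta}^0$ \emph{some} crossed simple root is adjacent to an uncrossed one, and that root works. Second, the degenerate case $\hat{I}=\hat{\Delta}^0$ (so $\g_0^{ss}\cap\a=0$ and $R$ is forced to vanish) is not a curiosity you can wave at: it occurs in this paper's applications (e.g.\ the split case $A_3/P_{1,2,3}$ is handled by exactly this proposition), and there one must show that some $\alpha\in\hat{\Delta}^+$ orthogonal to $\tau$ admits another positive restricted root orthogonal to it. This is where a rank hypothesis must enter -- for $\sl_3(\R)$ with its Borel and $\tau\perp\beta_1$ the conclusion is simply false, since no positive root of $A_2$ is orthogonal to $\beta_1$ -- so the statement cannot be proved in the bare generality in which it is displayed here. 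Finally, your attribution of the $(\sl_4(\H),P_{2,6})$ exception to a failure of this proposition is wrong: in that case the relevant weights satisfy $\tau^{\sharp}\notin\z(\g_0)$, so Proposition~\ref{prop::InCenter} is never invoked; the obstruction there is the failure of Proposition~\ref{prop::dimAtLeast2} (since $\dim(\g_0^{ss}\cap\a)=1$) and of Lemma~\ref{lemma::twoOrthogonal}, which is why Section~\ref{subsection::sl4} resorts to a non-lowest-weight harmonic seed.
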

\begin{remark}
It is the case that $\tau^{\sharp}\in \z(\g_0)\cap \a$ exactly when $\langle \tau, \beta_k\rangle=0$ for all $\beta_k\in \hat{\Delta}^0\backslash \hat{I},$ since the corresponding $\beta_k^{\sharp}$ span $\g_0^{ss}\cap \a.$
\end{remark}
\begin{proposition}\label{prop::dimAtLeast2}
Suppose $\tau^{\sharp}\not\in \z(\g_0)$ and $\dim(\g_0^{ss}\cap \a)>1.$ Then for each $\alpha\in \hat{\Delta}^+(\p^+)$, there exists $R\in \g_0^{ss}\cap \a$ and $\nu_0\in \hat{\Delta}^+(\p^+)$ such that $a_0:=\alpha^{\sharp}+R\in \ker \tau \cap \ker \nu_0.$
\end{proposition}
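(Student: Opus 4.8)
The plan is to convert the two linear conditions $\tau(a_0)=0$ and $\nu_0(a_0)=0$ into a one-parameter problem along a line lying inside the affine slice $\{R\in\g_0^{ss}\cap\a:\tau(\alpha^\sharp+R)=0\}$. Write $V:=\g_0^{ss}\cap\a$. Since $\a\le\g_0$, Proposition~\ref{prop::ComplementarySpaces} gives the orthogonal decomposition $\a=(\z(\g_0)\cap\a)\oplus V$, so the hypothesis $\tau^\sharp\notin\z(\g_0)$ is exactly the statement $\tau|_V\neq 0$. I would then fix $R_0\in V$ with $\tau(\alpha^\sharp+R_0)=0$, which is possible because $\tau|_V$ is a nonzero functional on $V$; and because $\dim V>1$, the kernel $\ker(\tau|_V)$ has dimension $\dim V-1\ge 1$, so I may fix a nonzero $R_1\in\ker(\tau|_V)$. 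Along the line $R(t):=R_0+tR_1$ in $V$ we have $\tau(\alpha^\sharp+R(t))=\tau(\alpha^\sharp+R_0)+t\,\tau(R_1)=0$ for every $t$, so the first condition holds identically, and it remains only to choose $\nu_0\in\hat{\Delta}^+(\p^+)$ and $t\in\R$ killing $\nu_0(\alpha^\sharp+R(t))$.

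The crucial claim will be that $\{\nu^\sharp:\nu\in\hat{\Delta}^+(\p^+)\}$ spans $\a$. Granting it, the nonzero vector $R_1$ cannot be Killing-orthogonal to all of these, so some $\nu_0\in\hat{\Delta}^+(\p^+)$ has $\nu_0(R_1)=\langle\nu_0^\sharp,R_1\rangle\neq 0$; then $t_0:=-\nu_0(\alpha^\sharp+R_0)/\nu_0(R_1)$ gives $\nu_0(\alpha^\sharp+R(t_0))=0$, and with $R:=R(t_0)\in V$ the element $a_0:=\alpha^\sharp+R$ lies in $\ker\tau\cap\ker\nu_0$, as required. To prove the claim I would argue as follows. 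Each simple restricted root $\beta_i\in\hat{I}$ lies in $\hat{\Delta}^+(\p^+)$, so $\beta_i^\sharp$ is in the span. For $\beta_k\in\hat{\Delta}^0\setminus\hat{I}$, choose a shortest path $\beta_k=\gamma_0,\gamma_1,\dots,\gamma_r$ in the restricted Dynkin diagram (connected, since $\g$ is simple) with $\gamma_r\in\hat{I}$ and $\gamma_0,\dots,\gamma_{r-1}\notin\hat{I}$. The partial sums $P_s:=\gamma_s+\cdots+\gamma_r$ are each restricted roots, by downward induction on $s$: $P_r=\gamma_r$ is a root, and since the diagram is a tree the vertex $\gamma_s$ meets the path only at $\gamma_{s+1}$, so $\langle\gamma_s,P_{s+1}\rangle=\langle\gamma_s,\gamma_{s+1}\rangle<0$, whence the unbroken $\gamma_s$-string through the positive root $P_{s+1}$ forces $P_s=\gamma_s+P_{s+1}\in\hat{\Delta}$. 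Each $P_s$ involves exactly the one element $\gamma_r$ of $\hat{I}$, so $h_{\hat{I}}(P_s)=1$ and $P_s\in\hat{\Delta}^+(\p^+)$; telescoping, $\beta_k^\sharp=P_0^\sharp-P_1^\sharp$ lies in the span. Since the simple restricted roots span $\a^*$, their images under $\sharp$ — now all exhibited inside the span — span $\a$.

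I expect the spanning claim to be the only genuine obstacle. The delicate point there is that the "sum of two consecutive simple roots is a root" step must survive for restricted root systems, including the non-reduced $BC_\ell$ cases; this should be fine because the underlying graph on simple restricted roots is still a path, hence a tree, and root strings remain unbroken in non-reduced systems, so the reflection argument applies verbatim. Everything else reduces to elementary linear algebra in $\a$.
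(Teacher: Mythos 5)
Your argument is correct. One caveat on the comparison: the paper does not actually prove this statement --- it is imported verbatim as Proposition~4.4 of \cite{Erickson2}, so there is no in-paper proof to measure you against; what follows is an assessment of your proof on its own terms. The linear-algebra frame (reduce to the affine line $\alpha^{\sharp}+R_0+\R R_1$ inside $\ker\tau$, using that $\tau^{\sharp}\notin\z(\g_0)$ is equivalent to $\tau|_{\g_0^{ss}\cap\a}\neq 0$ via the Killing-orthogonal splitting $\a=(\z(\g_0)\cap\a)\oplus(\g_0^{ss}\cap\a)$ from Proposition~\ref{prop::ComplementarySpaces}, and that the form is definite on $\a$) is exactly right, and the hypothesis $\dim(\g_0^{ss}\cap\a)>1$ enters precisely where it should, to guarantee $R_1\neq 0$. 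You correctly identified that the whole content is the spanning claim for $\{\nu^{\sharp}:\nu\in\hat{\Delta}^+(\p^+)\}$, and your proof of it is sound: the restricted Dynkin diagram of a simple $\g$ is a connected tree, so the shortest path from $\beta_k$ to $\hat{I}$ has each vertex adjacent only to its successor among the later path vertices, the partial sums $P_s$ are restricted roots by the unbroken-string argument (valid also in the non-reduced $BC$ cases), each has $h_{\hat{I}}=1$, and $\beta_k^{\sharp}=P_0^{\sharp}-P_1^{\sharp}$ telescopes. The only implicit assumptions are $\hat{I}\neq\varnothing$ and $\g$ simple, both of which hold throughout the paper (and the statement is vacuous if $\hat{\Delta}^+(\p^+)=\varnothing$). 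I see no gap.
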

\begin{remark}
    The quantity $\dim(\g_0^{ss}\cap \a)$ is equal to $|\hat{\Delta}^0\backslash \hat{I}|,$ the number of uncrossed vertices in the restricted Dynkin diagram.
\end{remark}
The following is a minor variation of a technique suggested in \cite{Erickson2} on page 20.
\begin{lemma}\label{lemma::twoOrthogonal}
    Suppose $\tau^{\sharp}\not\in \z(\g_0)$ and there exist restricted roots $\nu_0,\alpha\in \hat{\Delta}^+(\p^+)$ such that $\g_0^{ss}\cap \a\subset \ker \nu_0$ and $\langle\nu_0,\alpha\rangle=0.$ Then there exists $R\in \g_0^{ss}\cap \a$ such that $a_0:=\alpha^{\sharp}+R\in \ker \tau\cap \ker \nu_0.$
\end{lemma}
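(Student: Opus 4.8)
The plan is to show that the constraint $a_0 \in \ker\tau$ is the only real content: once $R$ is required to lie in $\g_0^{ss}\cap\a$, membership in $\ker\nu_0$ is automatic, and membership in $\ker\tau$ becomes a single inhomogeneous linear equation on $R$ which is solvable precisely because $\tau^{\sharp}\notin\z(\g_0)$.

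First I would check the $\nu_0$-condition. For any $R\in\g_0^{ss}\cap\a$ we have $\nu_0(\alpha^{\sharp}+R)=\langle\nu_0,\alpha\rangle+\nu_0(R)$, and both terms vanish: the first by the orthogonality hypothesis $\langle\nu_0,\alpha\rangle=0$, and the second because $\g_0^{ss}\cap\a\subset\ker\nu_0$ by hypothesis. So $\alpha^{\sharp}+R\in\ker\nu_0$ for every choice of $R$ in $\g_0^{ss}\cap\a$, and the task reduces to choosing such an $R$ with $\tau(\alpha^{\sharp}+R)=0$.

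Next I would use Proposition \ref{prop::ComplementarySpaces} to write $\a=(\z(\g_0)\cap\a)\oplus(\g_0^{ss}\cap\a)$ as a Killing-orthogonal direct sum, and decompose $\tau^{\sharp}=Z+S$ with $Z\in\z(\g_0)\cap\a$ and $S\in\g_0^{ss}\cap\a$; the hypothesis $\tau^{\sharp}\notin\z(\g_0)$ forces $S\neq0$. Since $\a$ is the noncompact part of the $\theta$-stable Cartan subalgebra $\c$, the Killing form is positive definite on $\a$, hence nondegenerate on the subspace $\g_0^{ss}\cap\a$, so $R\mapsto\langle S,R\rangle$ is a nonzero and therefore surjective linear functional on $\g_0^{ss}\cap\a$. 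For $R\in\g_0^{ss}\cap\a$, the orthogonality of $Z$ to $R$ gives $\tau(\alpha^{\sharp}+R)=\langle\tau,\alpha\rangle+\langle\tau^{\sharp},R\rangle=\langle\tau,\alpha\rangle+\langle S,R\rangle$, so I would pick $R\in\g_0^{ss}\cap\a$ with $\langle S,R\rangle=-\langle\tau,\alpha\rangle$. Then $a_0=\alpha^{\sharp}+R$ lies in $\ker\tau\cap\ker\nu_0$, as required.

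I do not anticipate any serious obstacle. The only points needing care are that $\g_0^{ss}\cap\a\neq0$, which follows from $S\neq0$, and that the Killing form restricts nondegenerately to it, which follows from its definiteness on $\a$; alternatively one can bypass the definiteness statement by noting directly that $\langle S,S\rangle\neq0$, which already shows the functional $\langle S,\cdot\rangle$ is nonzero.
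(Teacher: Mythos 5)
Your proof is correct and follows essentially the same route as the paper's: both arguments observe that the $\nu_0$-condition holds for every admissible $R$, and both reduce the problem to solving a single linear equation on $\g_0^{ss}\cap\a$, solvable because $\tau^{\sharp}\notin\z(\g_0)$ forces $\tau$ to be nonzero on $\g_0^{ss}\cap\a$. The paper phrases this last step as $\a=\g_0^{ss}\cap\a+\ker\tau$ (a codimension-one argument) rather than via the explicit orthogonal decomposition $\tau^{\sharp}=Z+S$, but the content is identical.
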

\begin{proof}
If 
\[
\g_0^{ss}\cap \a\subset \ker \tau=(\tau^{\sharp})^{\perp}
\]
then
$\tau^{\sharp}\in (\g_0^{ss}\cap \a)^{\perp}=\z(\g_0)$.
This is not the case, so $\g_0^{ss}\cap \a\not\subset \ker \tau.$ The subspace $\ker \tau \le \a$ has codimension one, so 
\[
\a=\g_0^{ss}\cap \a+\ker \tau.
\]
We have $\alpha^{\sharp}\in \a.$ Then there exists $R\in \g_0^{ss}\cap \a$ such that $\alpha^{\sharp}+R\in \ker \tau.$ On the other hand, $\alpha^{\sharp}\in \ker \nu_0$ and $R\in \ker \nu_0,$ so the claim follows.
\end{proof}
\section{Lowest Weights}
\subsection{Lowest Weights in the Harmonic Curvature Module}
This section develops results enabling us to compute lowest weight vectors in the module of harmonic curvature forms. With these results in hand, we can prove Theorem \ref{thm::SecondOrderCondition} and Theorem \ref{thm::C0Exists}, which are useful for constructing harmonic seeds and compact quotients of curvature trees, respectively. Let $\g$ be real semisimple. We keep in mind the identification $H_{\R}^2(\g_-,\g)\cong \ker \Delta\le C^2(\g_-,\g)$. Given $\beta,\gamma\in \hat{\Delta}^+(\p_+)$ and $\zeta\in \hat{\Delta}$, define a subspace $\g_{\beta,\gamma,\zeta}:=(\g_{\beta})_{\flat}\wedge (\g_{\gamma})_{\flat}\otimes \g_{\zeta}\le \bigwedge^2 (\g_-)^* \otimes \g$.

\begin{proposition}\label{prop::lowestWeightUnique}
Let $V\le \bigwedge^2 (\g_-)^* \otimes \g$ be a $\g_0$-irreducible representation. Suppose there is a $\g_0$ lowest weight vector $v\in V$ contained in $\g_{\beta,\gamma,\zeta}.$ Then every $\g_0$ lowest weight vector in $V$ is contained in $\g_{\beta,\gamma,\zeta}$.
\end{proposition}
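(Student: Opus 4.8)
The plan is to exploit the rigidity of the weight structure inside $\bigwedge^2(\g_-)^*\otimes\g$. First I would observe that every vector of $\bigwedge^2(\g_-)^*\otimes\g$ decomposes canonically into restricted weight components: since $\a$ acts semisimply, we may write $\bigwedge^2(\g_-)^*\otimes\g=\bigoplus_{\zeta}\bigoplus_{\beta,\gamma} V_{\beta,\gamma,\zeta}$, where the pair $(\beta,\gamma)$ ranges over unordered pairs of restricted roots in $\hat\Delta^+(\p_+)$ and $\zeta\in\hat\Delta$, and each $V_{\beta,\gamma,\zeta}$ lies in the $\a$-weight space for the weight $\zeta-\beta-\gamma$. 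The key point is that $V$, being $\g_0$-irreducible, has a one-dimensional lowest weight space, say for the weight $-\lambda\in\a^*$ (the negative of the highest restricted weight), and a $\g_0$ lowest weight vector is precisely a nonzero vector in that lowest $\a$-weight space that is additionally annihilated by the negative restricted root spaces inside $\g_0^{ss}$. So the set of lowest weight vectors of $V$ is exactly $V\cap(\bigwedge^2(\g_-)^*\otimes\g)_{-\lambda}$ minus the origin, up to the extra $\g_0^{ss}$-annihilation condition — but for an irreducible module the lowest weight space is already one-dimensional, hence the ``lowest weight vectors of $V$'' form a single line.

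Next I would identify which $V_{\beta,\gamma,\zeta}$ can contribute to that line. The weight $-\lambda$ is achieved inside $V_{\beta,\gamma,\zeta}$ only when $\zeta-\beta-\gamma=-\lambda$. Writing $S=\{(\beta,\gamma,\zeta): \zeta-\beta-\gamma=-\lambda\}$, we get $V\cap(\bigwedge^2(\g_-)^*\otimes\g)_{-\lambda}\subseteq\bigoplus_{(\beta,\gamma,\zeta)\in S}V_{\beta,\gamma,\zeta}$. The hypothesis supplies one lowest weight vector $v$ lying in a single summand $V_{\beta_0,\gamma_0,\zeta_0}$ with $(\beta_0,\gamma_0,\zeta_0)\in S$. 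Since the lowest weight space of the irreducible $V$ is one-dimensional and spanned by $v$, every lowest weight vector is a scalar multiple of $v$, hence lies in $V_{\beta_0,\gamma_0,\zeta_0}$. That is the statement.

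The one genuine subtlety, and what I expect to be the main obstacle, is justifying that the $\g_0$ lowest weight space of an irreducible $\g_0$-module is one-dimensional when $\g_0$ is only \emph{reductive} (not semisimple) and we are working over $\R$ with restricted roots rather than over $\C$. Over $\C$ with a semisimple group this is the standard highest weight theory fact; here one must check that the irreducible $\g_0$-representation $V$ restricted to $\g_0^{ss}$ remains irreducible (the center $\z(\g_0)$ acts by scalars on each $\g_0$-irreducible, so it does), and that the real form subtlety does not introduce a higher-dimensional lowest weight space. If it does — e.g. for a $\g_0$-module whose complexification is a sum of two conjugate irreducibles — I would instead argue as follows: the collection of lowest weight vectors, together with $0$, is the intersection of $V$ with the lowest $\a$-weight space and with $\bigcap_{\beta<0}\ker(\eta_\beta|_V)$; this intersection is a $K_{\Omega}$-type subspace but more to the point it is cut out by $\a$-weight plus annihilation conditions, both of which are ``block-diagonal'' with respect to the decomposition $\bigoplus V_{\beta,\gamma,\zeta}$. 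Concretely, if $w=\sum_{(\beta,\gamma,\zeta)\in S} w_{\beta,\gamma,\zeta}$ is any lowest weight vector and $v=v_{\beta_0,\gamma_0,\zeta_0}$ is the given one, then $v$ and $w$ generate $\g_0$-submodules of $V$, which by irreducibility are all of $V$; comparing their images under the $\g_0$-action and tracking which $V_{\beta,\gamma,\zeta}$ blocks appear (the $\g_0$-action permutes these blocks in a controlled way, since $\g_0\subset\g_0$ preserves $\hat\Delta(\g_0)$-translates), one finds that the block-support of any lowest weight vector must coincide with that of $v$. I would present the clean one-dimensionality argument as the main line and relegate this more hands-on support-tracking argument to handle the reductive/real edge cases.
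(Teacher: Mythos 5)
Your main line hinges on the claim that the lowest weight space of the $\g_0$-irreducible module $V$ is one-dimensional, and you correctly identify this as the weak point: in the setting of this paper it is false. Here $\g_0$ is only reductive, the ground field is $\R$, and ``weight'' means restricted weight relative to $\a$; the zero-restricted-weight part $Z(\a)\le\g_0$ contains, besides $\a$, a compact subalgebra acting nontrivially on each restricted weight space, and $\z(\g_0)$ need not act by real scalars. So the lowest restricted weight space of an irreducible $V$ is in general higher-dimensional (indeed Lemma \ref{lemma::LowestIsRestriction} produces real and imaginary parts that are both lowest weight vectors), and the proposition is precisely nontrivial in that situation. Your fallback is therefore the argument that must carry the proof, and as written it is not one: asserting that ``the block-support of any lowest weight vector must coincide with that of $v$'' after ``tracking which blocks appear'' is exactly the statement to be proved, and irreducibility plus weight bookkeeping cannot deliver it. The obstruction is that distinct blocks can share the same total restricted weight: for a positive root $\alpha$ of $\g_0$, a block such as $V_{\beta+\alpha,\gamma-\alpha,\zeta}$ has the same weight $\beta+\gamma+\zeta$ as $V_{\beta,\gamma,\zeta}$, and inside $U(\g_0)v=V$ one reaches it from $v$ by applying $\eta_{\alpha}$ and then $\eta_{-\alpha}$. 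Hence the weight-$\tau$ subspace of $V$ is not a priori confined to a single block, and the hypothesis that $v$ is a \emph{lowest} weight vector must actually be used.

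The missing idea, which is the paper's proof, is the standard triangular-decomposition step: because $v$ is annihilated by the negative root spaces of $\g_0$, the module $V=U(\g_0)v$ is already generated from $v$ by the non-negative part $\g_0^{\ge 0}$ alone (a PBW/induction argument). A nonempty product of positive root vectors raises the restricted weight by a nonzero sum of positive roots of $\g_0$, so the component of $U(\g_0^{\ge 0})v$ having the same restricted weight as $v$ is obtained by applying only weight-zero elements of $\g_0$. Those elements centralize $\a$, hence preserve every restricted root space $\g_{\alpha}$ and therefore preserve $V_{\beta,\gamma,\zeta}$. Since any other lowest weight vector $w$ has the same (unique) lowest restricted weight as $v$, it lies in this weight-zero-generated submodule and hence in $V_{\beta,\gamma,\zeta}$. (A minor additional slip: with the paper's conventions the weight of $V_{\beta,\gamma,\zeta}$ is $\beta+\gamma+\zeta$, not $\zeta-\beta-\gamma$.)
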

\begin{proof}
    We claim that every lowest weight vector in $V$ is in the $\z(\g_0)$ module generated by $v,$ which implies the result because $\z(\g_0)$ is contained in the Cartan subalgebra of $\g.$ Let $w\in V$ be another $\g_0$ lowest weight vector. Let $\g_0^{\ge 0}:=\z(\g_0)\oplus \bigoplus_{\alpha \in \hat{\Delta}^+(\g_0)} \g_{\alpha}.$ Since $v$ is a lowest weight element, an inductive argument shows that $V$, the $\g_0$ module generated by $v$, is in fact the $\g_0^{\ge 0}$ module generated by $v$. The lowest restricted weight of an irreducible representation is unique, so $v$ and $w$ both have the same restricted weight. Because $w$ is in the $\g_0^{\ge 0}$ module generated by $v$ but has the same restricted weight, it must be in the $\z(\g_0)$ module generated by $v.$
\end{proof}
\begin{remark}\label{rmk::WeightIsSum}
Vectors in $\g_{\beta,\gamma,\zeta} $ have weight $\beta+\gamma+\zeta.$
\end{remark}
Let $\iota_{\a}:\h^*\to \a^*$ be the restriction map.
\begin{proposition}\label{prop::DirectSumOfRestricted}
Let $\g\curvearrowright V$ be a representation with complexification $\g^{\C}\curvearrowright V^{\C}$. Let $\lambda\in \a^*.$ Then $\bigoplus_{\widetilde{\lambda}\in \iota_{\a}^{-1}(\lambda)} (V^{\C})_{\widetilde{\lambda}}= (V_{\lambda})^{\C}.$
\end{proposition}
\begin{proof}
We immediately have $(V_{\lambda})^{\C}\le (V^{\C})_{\lambda}.$ Since $\bigoplus_{\lambda\in \a^*} (V_\lambda)^{\C}=V^{\C}=\bigoplus_{\lambda\in \a^*} (V^{\C})_{\lambda}$, none of these inclusions can be proper, and so $V_{\lambda}^{\C}=(V^{\C})_{\lambda}.$

On the other hand, if $\widetilde{\lambda}|_{\a}=\lambda$, then  $(V^{\C})_{\widetilde{\lambda}}\le (V^{\C})_{\lambda},$ which implies \[
\bigoplus_{\widetilde{\lambda}\in \iota_{\a}^{-1}(\lambda)} (V^{\C})_{\widetilde{\lambda}}\le (V^{\C})_{\lambda}.
\] 
Indexing over all $\lambda\in \a^*,$ both sides direct sum to to $V^{\C},$ which implies 
\[
\bigoplus_{\widetilde{\lambda}\in \iota_{\a}^{-1}(\lambda)} (V^{\C})_{\widetilde{\lambda}}
= (V^{\C})_{\lambda}
=(V_{\lambda})^{\C}.
\]
\end{proof}

Specializing to the adjoint representation of $\g$ we have $\bigoplus_{\widetilde{\alpha}\in \iota_{\a}^{-1}(\alpha)}(\g^{\C})_{\widetilde{\alpha}}= (\g_{\alpha})^{\C}.$ In particular, if $\widetilde{\alpha}|_{\a}=\alpha$ then $(\g^{\C})_{\widetilde{\alpha}}\le (\g^{\alpha})^{\C}.$

\begin{remark}\label{rmk::identifications}
Using the identifications $\p^+\cong \g_-^*$ and $\p_+^{\C}\cong (\g_-^{\C})^*$ taking $\eta\mapsto \eta_{\flat_{\R}}$ and $\eta\mapsto \eta_{\flat_{\C}}$, the inclusion $\p_+\to \p_+^{\C}$ becomes $\C$-linear extension $\g_-^*\to (\g^{\C}_-)^*.$ Thus the inclusion map $C_2^{\R}(\p_+,\g)\to C_2^{\C}(\p_+^{\C},\g^{\C})$ taking $\eta_1\wedge_{\R} \eta_2\otimes_{\R} \eta_3\mapsto \eta_1\wedge_{\C} \eta_2\otimes_{\C} \eta_3$ is identified with the extension map $C^2_{\R}(\g_-,\g)\to C^2_{\C}(\g_-^{\C},\g^{\C})$ taking $(\eta_1)_{\flat_{\R}}\wedge_{\R} (\eta_2)_{\flat_{\R}}\otimes_{\R} \eta_3\mapsto (\eta_1)_{\flat_{\C}}\wedge_{\C} (\eta_2)_{\flat_{\C}}\otimes_{\C} \eta_3$. 
\end{remark}
\begin{corollary}
Let $\widetilde{\beta},\widetilde{\gamma}\in \Delta(\p_+^{\C})$ and $\widetilde{\zeta}\in \Delta(\g^{\C}),$ and let $\beta,\gamma\in \hat{\Delta}(\p^+)$ and $\zeta\in \hat{\Delta}(\g)$ be their restrictions to $\a.$ Then $(\g^{\C})_{\widetilde{\beta},\widetilde{\gamma},\widetilde{\zeta}}
\le (\g_{\beta,\gamma,\zeta})^{\C}$.
\end{corollary}
\begin{proof}
    From the proposition we know that
    \[
    (\g^{\C})_{\widetilde{\beta}}\wedge (\g^{\C})_{\widetilde{\gamma}}\otimes 
    (\g^{\C})_{\widetilde{\zeta}}
    \le (\g_{\alpha})^{\C}
    \wedge (\g_{\beta})^{\C}
    \otimes (\g_{\zeta})^{\C}
    =(\g_{\alpha}\wedge \g_{\beta} \otimes \g_{\zeta})^{\C}.
    \]
    The result follows from applying the identifications of Remark \ref{rmk::identifications}.
\end{proof}
The following lemma is useful in the proof of Theorem \ref{thm::LowestWeightIsRestriction}, and in the analysis of the $(\sl_4(\H),P_{2,6})$ case in Section \ref{subsection::sl4}.
\begin{lemma}\label{lemma::LowestIsRestriction}
    Suppose $V\le \bigwedge^2 (\g_-)^* \otimes \g$ is a $\g_0$ subrepresentation, and $\widetilde{\Omega}\in V^{\C}$ is a $\g_0^{\C}$ weight vector such that $\widetilde{\Omega}
    \in \g_{\widetilde{\beta},\widetilde{\gamma},\widetilde{\zeta}}$.
    Then the real and imaginary parts of $\widetilde{\Omega}$ in $V$, if nonzero, are $\g_0$ weight vectors in $\g_{\beta,\gamma,\zeta}$, where $\beta,\gamma,\zeta$ are the restrictions of $\widetilde{\beta},\widetilde{\gamma},\widetilde{\zeta}$ to $\a.$ In addition, if $\widetilde{\Omega}$ is a $\g_0^{\C}$ lowest weight vector, then its real and imaginary parts, if nonzero, are $\g_0$ lowest weight vectors.
\end{lemma}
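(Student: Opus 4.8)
The plan is to pass between the real space $\bigwedge^2(\g_-)^*\otimes\g$ and its complexification throughout, using the elementary fact that the real and imaginary parts (formed in any common real form) of an element of the complexification $U^{\C}$ of a real subspace $U$ again lie in $U$. For the weight statement: by the proposition immediately preceding this lemma, $\widetilde\Omega\in V_{\widetilde\beta,\widetilde\gamma,\widetilde\zeta}\le(V_{\beta,\gamma,\zeta})^{\C}$. Every vector of $V_{\beta,\gamma,\zeta}$ is an $\a$-eigenvector of restricted weight $\beta+\gamma+\zeta$ (Remark \ref{rmk::WeightIsSum}), hence so is every vector of $(V_{\beta,\gamma,\zeta})^{\C}$ under the complexified $\a$-action; in particular $H\cdot\widetilde\Omega=(\beta+\gamma+\zeta)(H)\,\widetilde\Omega$ for $H\in\a$, with the scalar real. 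Since $V_{\beta,\gamma,\zeta}$ is a real subspace, the real and imaginary parts $u:=\mathrm{Re}\,\widetilde\Omega$ and $w:=\mathrm{Im}\,\widetilde\Omega$ (taken in $V$, equivalently in the ambient real space $\bigwedge^2(\g_-)^*\otimes\g$, which is a common real form) lie in $V_{\beta,\gamma,\zeta}$ as well as in $V$. Because $\a$ acts on the real space $V$ and the scalar is real, comparing the components in $V$ and $iV$ in $H\cdot u+i\,H\cdot w=(\beta+\gamma+\zeta)(H)\,\widetilde\Omega$ gives $H\cdot u=(\beta+\gamma+\zeta)(H)\,u$ and $H\cdot w=(\beta+\gamma+\zeta)(H)\,w$. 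Thus, when nonzero, $u$ and $w$ are $\g_0$ weight vectors of restricted weight $\beta+\gamma+\zeta$ lying in $V_{\beta,\gamma,\zeta}$.

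For the lowest-weight statement, assume $\widetilde\Omega$ is a $\g_0^{\C}$ lowest weight vector. Let $\xi\in\hat\Delta(\g_0)$ be a negative restricted root, so $\g_\xi\le\g_0$ and hence $(\g_\xi)^{\C}\le\g_0^{\C}$. The space $(\g_\xi)^{\C}$ is the sum of the complex root spaces $\g^{\C}_{\widetilde\xi}$ over roots $\widetilde\xi$ of $\g^{\C}$ with $\widetilde\xi|_\a=\xi$, and each such $\widetilde\xi$ is negative, because under the compatible positive system $\Delta^+$ the positive non-compact roots restrict to positive restricted roots. Hence each $\g^{\C}_{\widetilde\xi}$ is a negative root space of $\g_0^{\C}$ and annihilates $\widetilde\Omega$, so $(\g_\xi)^{\C}\cdot\widetilde\Omega=0$. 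For real $X\in\g_\xi$ we then have $X\cdot u+i\,X\cdot w=X\cdot\widetilde\Omega=0$ with $X\cdot u,X\cdot w\in V$, forcing $X\cdot u=X\cdot w=0$. Since $\xi$ was an arbitrary negative restricted root of $\g_0$, the vectors $u$ and $w$, if nonzero, are annihilated by every negative restricted root space of $\g_0$ and so are $\g_0$ lowest weight vectors.

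I expect the only point requiring care to be the root bookkeeping in the second paragraph: that $(\g_\xi)^{\C}$ is a sum of root spaces of $\g_0^{\C}$ and that all roots of $\g^{\C}$ restricting to a negative restricted root of $\g_0$ are themselves negative. Both follow from the relation $(\p_{\I})^{\C}=\p_I$ and the compatibility of the positive systems $\Delta^+$ and $\hat\Delta^+$ recorded in the Structure Theory subsection; everything else is routine manipulation of real and imaginary parts.
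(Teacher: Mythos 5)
Your proof is correct and follows essentially the same route as the paper's: write $\widetilde{\Omega}=\Omega+i\Omega'$ with both parts landing in $V_{\beta,\gamma,\zeta}$ via the containment $V_{\widetilde{\beta},\widetilde{\gamma},\widetilde{\zeta}}\le (V_{\beta,\gamma,\zeta})^{\C}$, use the reality of the $\a$-eigenvalue to split the weight equation into real and imaginary parts, and for the lowest-weight claim observe that $((\g_0)_{\xi})^{\C}$ is a direct sum of negative root spaces of $\g^{\C}$ that annihilate $\widetilde{\Omega}$. The only difference is that you make explicit the root bookkeeping (that roots of $\g^{\C}$ restricting to negative restricted roots are themselves negative, by compatibility of $\Delta^+$ with $\hat{\Delta}^+$) which the paper leaves implicit.
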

\begin{proof}
    Without loss of generality, we prove the result for the real part. Define $\Omega,\Omega'\in V$ as the unique vectors such that $\tilde{\Omega}=\Omega+i\Omega'.$ Since $\tilde{\Omega}\in \g_{\widetilde{\beta},\widetilde{\gamma},\widetilde{\zeta}}
    \le(\g_{\beta,\gamma,\zeta})^{\C},$ we have $\Omega,\Omega'\in \g_{\beta,\gamma,\zeta}.$ By assumption, $\Omega\neq 0$. Because $\tilde{\Omega}$ is a weight vector for $\g_0^{\C}$ and $\a\le \h_0$, $\widetilde{\Omega}$ scales by real values under the adjoint action of $\a.$ Therefore $\Omega$ is a weight vector for $\g_0.$ 
    
    Now suppose $\widetilde{\Omega}$ is a lowest weight. Let $\alpha$ be some negative $\g_0$ restricted root. By Proposition \ref{prop::DirectSumOfRestricted}, \[
    ((\g_0)_{\alpha})^{\C}
    =\bigoplus_{\widetilde{\alpha}\in \iota_{\a}^{-1}(\alpha)}(\g_0^{\C})_{\widetilde{\alpha}},
    \]
    a direct sum of negative rootspaces of $\g_0^{\C}.$ These rootspaces annihilate $\tilde{\Omega}$ by assumption, so $(\g_0)_{\alpha}$ annihilates $\tilde{\Omega},$ so $(\g_0)_{\alpha}$ annihilates $\Omega.$ It follows that $\Omega$ is a lowest weight vector. 
\end{proof}
As discussed in Section \ref{section::HarmonicCurvature}, harmonic curvature of regular, normal parabolic geometries is valued in the $\g_0$ module $H^2_{\R}(\g_-,\g)$. However, Kostant's Borel-Weil-Bott facilitates computation of the complex analog $H^2_{\C}(\g_-^{\C},\g^{\C}),$ a $\g_0^{\C}$ module. The following theorem allows us to compare the two modules.
\begin{theorem}\label{thm::LowestWeightIsRestriction}
    Suppose $\g$ is noncomplex simple. There is a $\g_0$ lowest weight vector $\Omega\in H_{\R}^2(\g_-,\g)$ contained in $\g_{\beta,\gamma,\zeta}$ exactly when there is a $\g_0^{\C}$ lowest weight vector $\widetilde{\Omega}\in H^2_{\C}(\g_-^{\C},\g^{\C})$ of the form
    \[
    \Omega=(\eta_{\widetilde{\beta}})_{\flat}
    \wedge(\eta_{\widetilde{\gamma}})_{\flat}
    \otimes \eta_{\widetilde{\zeta}}
    \]
    such that $\beta,\gamma,\zeta$ are the restrictions of $\widetilde{\beta}, \widetilde{\gamma},\widetilde{\zeta}$ to $\a.$
\end{theorem}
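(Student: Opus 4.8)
The plan is to translate the statement about real cohomology into complex cohomology via Proposition~\ref{prop::ComplexVsRealCohomology}(a), use Borel--Weil--Bott to describe the complex lowest weight vectors explicitly, and then descend back using Lemma~\ref{lemma::LowestIsRestriction}. Since $\g$ is noncomplex simple, the adjoint action $\g\curvearrowright \g$ is noncomplex, so Proposition~\ref{prop::ComplexVsRealCohomology}(a) gives a $\g_0^{\C}$-equivariant isomorphism $H^2_{\C}(\g_-^{\C},\g^{\C})\cong H^2_{\R}(\g_-,\g)^{\C}$, realized as the restriction of $C^2_{\C}(\g_-^{\C},\g^{\C})\cong C^2_{\R}(\g_-,\g)^{\C}$ to harmonic elements. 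Under this identification, a harmonic $\g_0^{\C}$ weight vector $\widetilde{\Omega}$ corresponds to an element of $H^2_{\R}(\g_-,\g)^{\C}$, and by the remark following Proposition~\ref{prop::lowestWeightUnique}, $\widetilde{\Omega}\in V_{\widetilde\beta,\widetilde\gamma,\widetilde\zeta}$ implies the complexification sits inside $(V_{\beta,\gamma,\zeta})^{\C}$ where $\beta,\gamma,\zeta$ are the restrictions to $\a$.

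For the forward direction, suppose $\Omega\in H^2_{\R}(\g_-,\g)$ is a $\g_0$ lowest weight vector lying in $V_{\beta,\gamma,\zeta}$. Then $\Omega$, viewed inside $H^2_{\R}(\g_-,\g)^{\C}\cong H^2_{\C}(\g_-^{\C},\g^{\C})$, generates a $\g_0^{\C}$-subrepresentation; I would argue that because $\Omega$ is annihilated by all negative restricted root spaces of $\g_0$, and $((\g_0)_\alpha)^{\C}$ for $\alpha$ a negative restricted root is a sum of negative root spaces of $\g^{\C}$, the vector $\Omega$ is in fact a lowest weight vector for $\g_0^{\C}$ (the same argument as in the second paragraph of the proof of Lemma~\ref{lemma::LowestIsRestriction}, run in reverse). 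By Borel--Weil--Bott, part (b), every $\g_0^{\C}$ lowest weight vector in $H^2_{\C}(\g_-^{\C},\g^{\C})$ is of the stated form $(\eta_{\widetilde\beta})_\flat\wedge(\eta_{\widetilde\gamma})_\flat\otimes\eta_{\widetilde\zeta}$ for roots $\widetilde\beta,\widetilde\gamma$ with $\widetilde\zeta=-w(\mu)$; and since $\Omega\in (V_{\beta,\gamma,\zeta})^{\C}$, the weights $\widetilde\beta,\widetilde\gamma,\widetilde\zeta$ must restrict to $\beta,\gamma,\zeta$ on $\a$ (the restriction of a root space sits inside the corresponding restricted root space by the proposition preceding Lemma~\ref{lemma::LowestIsRestriction}, so any other choice of root-triple would put $\Omega$ into a wrong $V_{\beta',\gamma',\zeta'}$). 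This produces the desired $\widetilde\Omega$.

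For the converse, given a $\g_0^{\C}$ lowest weight vector $\widetilde\Omega=(\eta_{\widetilde\beta})_\flat\wedge(\eta_{\widetilde\gamma})_\flat\otimes\eta_{\widetilde\zeta}\in H^2_{\C}(\g_-^{\C},\g^{\C})$, transport it to $H^2_{\R}(\g_-,\g)^{\C}$ and apply Lemma~\ref{lemma::LowestIsRestriction}: its real and imaginary parts lie in $V_{\beta,\gamma,\zeta}$ and are $\g_0$ lowest weight vectors provided at least one of them is nonzero. Since $\widetilde\Omega\neq 0$, at least one of $\Omega,\Omega'$ is nonzero, and that one is the sought lowest weight vector in $H^2_{\R}(\g_-,\g)\cap V_{\beta,\gamma,\zeta}$.

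I expect the main obstacle to be making the identification $H^2_{\C}(\g_-^{\C},\g^{\C})\cong H^2_{\R}(\g_-,\g)^{\C}$ truly compatible with the chain-level inclusions $V_{\widetilde\beta,\widetilde\gamma,\widetilde\zeta}\hookrightarrow (V_{\beta,\gamma,\zeta})^{\C}$, i.e.\ checking that the abstract $\g_0^{\C}$-isomorphism from Proposition~\ref{prop::ComplexVsRealCohomology}(a) really is induced by the naive complexification of cochains so that weights and root-space membership are preserved; the ``furthermore'' clause of that proposition is exactly what is needed, and the bulk of the work is bookkeeping to confirm that a lowest weight vector on one side lands in the claimed $V_{\beta,\gamma,\zeta}$ on the other, together with the elementary reversal of the annihilation argument to promote a $\g_0$-lowest weight vector to a $\g_0^{\C}$-lowest weight vector.
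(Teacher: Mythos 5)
Your converse direction (from a complex lowest weight vector to a real one) is exactly the paper's argument: transport via Proposition~\ref{prop::ComplexVsRealCohomology}(a) and apply Lemma~\ref{lemma::LowestIsRestriction}. The forward direction, however, has a genuine gap. You claim that a $\g_0$ lowest weight vector $\Omega\in H^2_{\R}(\g_-,\g)$ becomes a $\g_0^{\C}$ lowest weight vector under the identification $H^2_{\R}(\g_-,\g)^{\C}\cong H^2_{\C}(\g_-^{\C},\g^{\C})$, by ``running Lemma~\ref{lemma::LowestIsRestriction} in reverse.'' That reversal fails for non-split $\g$ for two reasons. First, the negative root spaces of $\g_0^{\C}$ are not exhausted by the complexifications $((\g_0)_\alpha)^{\C}$ of negative \emph{restricted} root spaces: the negative compact root spaces (those $\alpha\in\Delta_c$) sit inside $Z(\a)^{\C}$, and nothing in the definition of a $\g_0$ lowest weight vector forces $\Omega$ to be annihilated by them. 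Second, $\Omega$ is only an $\a$-weight vector, not an $\h$-weight vector; when $Z(\a)\neq\a$ the restricted weight space containing $\Omega$ decomposes further over $\h$, and a real lowest weight vector is generically a sum of several $\h$-weight components. So $\Omega$ itself need not be of the Borel--Weil--Bott form, and your subsequent identification of $\widetilde\beta,\widetilde\gamma,\widetilde\zeta$ has nothing to attach to.

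The paper circumvents this by never asserting that $\Omega$ itself is a complex lowest weight vector. Instead: take a $\g_0$-irreducible $V\le H^2_{\R}(\g_-,\g)$ containing $\Omega$, complexify, and choose \emph{some} $\g_0^{\C}$ lowest weight vector $\widetilde\Omega\in V^{\C}$ (one exists since $V^{\C}\neq 0$ is finite-dimensional). Borel--Weil--Bott puts $\widetilde\Omega$ in the required form, Lemma~\ref{lemma::LowestIsRestriction} descends it to a real lowest weight vector $\Omega'\in V$ lying in $V_{\beta',\gamma',\zeta'}$ for the restricted roots, and Proposition~\ref{prop::lowestWeightUnique} --- which says all lowest weight vectors of the irreducible $V$ lie in the same $V_{\beta',\gamma',\zeta'}$ --- forces $\Omega\in V_{\beta',\gamma',\zeta'}$, hence $(\beta',\gamma',\zeta')=(\beta,\gamma,\zeta)$. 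You should replace your promotion step with this detour through an irreducible subrepresentation; Proposition~\ref{prop::lowestWeightUnique} is the ingredient your draft omits, and it is what makes the forward direction close.
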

\begin{proof}
Suppose 
\[
\tilde{\Omega}=(\eta_{\widetilde{\beta}})_{\flat}
    \wedge(\eta_{\widetilde{\gamma}})_{\flat}
    \otimes \eta_{\widetilde{\zeta}}\in H^2_{\C}(\g_-^{\C},\g^{\C})=H^2_{\R}(\g_-,\g)^{\C}
\]
is a $\g_0^{\C}$ lowest weight vector. By Lemma \ref{lemma::LowestIsRestriction}, there is a $\g_0$ lowest weight vector $\Omega\in H^2_{\R}(\g_-,\g)$ in $\g_{\beta,\gamma,\zeta}.$

Now suppose that there is a lowest weight vector $\Omega\in H^2_{\R}(\g_-,\g)$ in $\g_{\beta,\gamma,\zeta}$. Let $V\le H_{\R}^2(\g_-,\g)$ be a $\g_0$ irreducible subrepresentation containing $\Omega$. By Proposition \ref{prop::ComplexVsRealCohomology}(b),
    \[
    V^{\C}\le H_{\R}^2(\g_-,\g)^{\C}= H^2_{\C}(\g_-^{\C},\g^{\C}).
    \]
    Let $\widetilde{\Omega}\in V^{\C}$ be a $\g_0^{\C}$ lowest weight vector. By the Kostant's Borel-Weil-Bott, $\widetilde{\Omega}
    =(\eta_{\widetilde{\beta}})_{\flat}
    \wedge (\eta_{\widetilde{\gamma}})_{\flat}
    \otimes \eta_{\widetilde{\zeta}}$ for some roots $\widetilde{\beta},\widetilde{\gamma},\widetilde{\zeta}$ of $\g_0^{\C}$. By Lemma \ref{lemma::LowestIsRestriction}, there is a $\g_0$ lowest weight vector $\Omega'\in V$ contained in $\g_{\beta,\gamma,\zeta}$, where $\beta,\gamma,\zeta$ are the restrictions of $\widetilde{\beta},\widetilde{\gamma},\widetilde{\zeta}$ to $\a.$ By Proposition \ref{prop::lowestWeightUnique}, $\Omega\in \g_{\beta,\gamma,\zeta}.$
\end{proof}
Recalling Remark \ref{rmk::WeightIsSum} and Kostant's Borel-Weil-Bott, this implies the following.
\begin{corollary}\label{cor::LowestWeightIsRestriction}
    Suppose $\g$ is noncomplex simple. Then the lowest weights of $H^2_{\R}(\g_-,\g)_+$ are equal to $-(w\cdot \mu)|_{\a}$ for $w\in W^{\p}_+(2).$
\end{corollary}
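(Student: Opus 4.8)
The plan is to chain together the three results that immediately precede this corollary. By Corollary \ref{cor::LowestWeightIsRestriction}'s hypothesis $\g$ is noncomplex simple, so Theorem \ref{thm::LowestWeightIsRestriction} applies: a $\g_0$ lowest weight vector $\Omega\in H^2_{\R}(\g_-,\g)$ lying in some $V_{\beta,\gamma,\zeta}$ exists precisely when there is a $\g_0^{\C}$ lowest weight vector $\widetilde\Omega\in H^2_{\C}(\g_-^{\C},\g^{\C})$ of the stated product form whose constituent roots restrict to $\beta,\gamma,\zeta$. So I would first argue that the lowest weights of $H^2_{\R}(\g_-,\g)_+$ are exactly the weights $\beta+\gamma+\zeta$ realized this way, using Remark \ref{rmk::WeightIsSum} (vectors in $V_{\beta,\gamma,\zeta}$ have weight $\beta+\gamma+\zeta$). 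The one subtlety is the subscript ``$+$'': I would note that the positive-homogeneity part $H^2_{\R}(\g_-,\g)_+$ is a $\g_0$-submodule (it is a sum of isotypic components), and under the correspondence of Proposition \ref{prop::ComplexVsRealCohomology}(a) and its complexification it matches $H^2_{\C}(\g_-^{\C},\g^{\C})_+$, so the lowest weight vectors of the real ``$+$'' part correspond exactly to the lowest weight vectors of the complex ``$+$'' part — and the latter is $\bigoplus_{w\in W^{\p}_+(2)} V^{w\cdot\mu}$ as recorded in the Borel–Weil–Bott discussion.

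Next I would invoke Borel–Weil–Bott together with the explicit harmonic representative formula. By part (b) of the theorem, and the subsequent computation in the adjoint case, the harmonic lowest weight vector in the summand indexed by $w\in W^{\p}_+(2)$ is $(\eta_{\alpha_i})_{\flat}\wedge(\eta_{s_i(\alpha_j)})_{\flat}\otimes\eta_{-w(\mu)}$ with $(ij)=w$, having $\g_0^{\C}$-weight $-(w\cdot\mu)$. This is exactly a vector of the product form demanded by Theorem \ref{thm::LowestWeightIsRestriction}, with $\widetilde\beta=\alpha_i$, $\widetilde\gamma=s_i(\alpha_j)$, $\widetilde\zeta=-w(\mu)$. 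Hence for each $w\in W^{\p}_+(2)$ there is a corresponding $\g_0$ lowest weight vector $\Omega\in H^2_{\R}(\g_-,\g)_+$ in $V_{\beta,\gamma,\zeta}$, and by Remark \ref{rmk::WeightIsSum} its weight is $\beta+\gamma+\zeta = (\widetilde\beta+\widetilde\gamma+\widetilde\zeta)|_{\a} = -(w\cdot\mu)|_{\a}$, using that restriction to $\a$ is linear and commutes with the sum.

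Finally I would check the converse inclusion: every $\g_0$ lowest weight of $H^2_{\R}(\g_-,\g)_+$ arises this way. Given such a lowest weight vector $\Omega$, it lies in some $V_{\beta,\gamma,\zeta}$ (since $H^2_{\R}(\g_-,\g)\le C^2(\g_-,\g)$ decomposes into such pieces; concretely one takes $\beta,\gamma,\zeta$ from the support of $\Omega$ — this is where I would want to be a little careful, invoking Proposition \ref{prop::lowestWeightUnique} to see the lowest weight vector is confined to a single $V_{\beta,\gamma,\zeta}$). Then Theorem \ref{thm::LowestWeightIsRestriction} produces a matching $\g_0^{\C}$ lowest weight vector $\widetilde\Omega\in H^2_{\C}(\g_-^{\C},\g^{\C})_+$ in product form, which by Borel–Weil–Bott must be the representative for some $w\in W^{\p}_+(2)$, so $\Omega$ has weight $-(w\cdot\mu)|_{\a}$. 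I expect the main obstacle to be purely bookkeeping: making sure the ``$+$'' (positive homogeneity) conditions on the real and complex sides genuinely correspond under complexification — that is, that $H^2_{\R}(\g_-,\g)_+$ complexifies to $H^2_{\C}(\g_-^{\C},\g^{\C})_+$ rather than something larger — which follows because the grading element $E\in\a$ is defined over $\R$, so homogeneity degree is preserved under the identification $C^2_{\C}(\g^{\C},V^{\C})\cong C^2_{\R}(\g,V)^{\C}$ of Proposition \ref{prop::ComplexVsRealCohomology}(a). Everything else is a direct substitution.
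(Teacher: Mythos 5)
Your proposal is correct and follows essentially the same route as the paper, which derives the corollary directly from Theorem \ref{thm::LowestWeightIsRestriction}, Remark \ref{rmk::WeightIsSum}, and the Borel--Weil--Bott description of the harmonic lowest weight vectors; your additional care about the positive-homogeneity parts matching under complexification (via the real grading element $E$) is a correct filling-in of a detail the paper leaves implicit.
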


\subsection{Some Consequences}
\begin{proposition}\label{prop::NegativeCoefficient}
Suppose $\g$ is semisimple of real rank at least $3$ and let $\alpha_i,\alpha_j\in \Delta^0.$ Then 
\begin{enumerate}[(a)]
\item $-s_is_j(\mu)|_{\a}\in - \hat{\Delta}^+,$ and
\item when expressed in terms of simple restricted roots, $-((ij)\cdot \mu)|_{\a}$ has some negative coefficient.
\end{enumerate} 
\end{proposition}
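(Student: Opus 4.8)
The plan is to reduce both parts to a single Weyl-group computation together with careful bookkeeping for the restriction map $\Delta^0\to\hat\Delta^0\cup\{0\}$. I would first dispose of trivial cases: I may take $\g^{\C}$ simple (which is the situation in all the applications of this proposition), so that $\mu=\sum_k n_k\alpha_k$ with every $n_k\ge 1$, and I may assume $i\neq j$, since for $i=j$ we have $(ij)\cdot\mu=\mu$ and both claims hold because $\mu|_{\a}$ is itself a positive restricted root.

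The first real step is to record that $s_is_j(\mu)=\mu-p\,\alpha_i-q\,\alpha_j$ for integers $p,q\ge 0$: maximality of $\mu$ gives $s_j(\mu)=\mu-q\,\alpha_j$ and $s_i(\mu)=\mu-r\,\alpha_i$ with $q,r\ge 0$, while $s_i(\alpha_j)$ is $\alpha_j$ plus a nonnegative (Cartan-integer) multiple of $\alpha_i$; substituting into $s_is_j(\mu)=s_i(\mu)-q\,s_i(\alpha_j)$ yields the formula. In particular the coefficient of $\alpha_k$ in $s_is_j(\mu)$ equals $n_k\ge 1$ for every $k\notin\{i,j\}$, so $s_is_j(\mu)$ is a positive root of $\g^{\C}$.

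Next I would invoke the real-rank hypothesis. Since the restricted rank is at least $3$, $|\hat\Delta^0|\ge 3$, whereas $\alpha_i,\alpha_j$ contribute at most two elements to $\hat\Delta^0\cup\{0\}$; as the restriction map carries $\Delta^0\setminus\Delta_c$ onto $\hat\Delta^0$, I can pick $\beta_l\in\hat\Delta^0$ with $\beta_l\notin\{\alpha_i|_{\a},\alpha_j|_{\a}\}$, say $\beta_l=\alpha_k|_{\a}$, and then necessarily $k\notin\{i,j\}$. Restricting the formula of the previous step and expanding in simple restricted roots, the $\beta_l$-coefficient of $s_is_j(\mu)|_{\a}$ receives no contribution from $-p\,\alpha_i|_{\a}-q\,\alpha_j|_{\a}$ and a contribution of at least $n_k\ge 1$ from $\mu|_{\a}$, so it is strictly positive. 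In particular $s_is_j(\mu)|_{\a}\neq 0$, hence $s_is_j(\mu)\notin\Delta_c$, and since it is a positive root its restriction lies in $\hat\Delta^+$; this is part (a).

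For part (b), I would use $\Phi_{(ij)}=\{\alpha_i,s_i(\alpha_j)\}$ and $\sum_{\alpha\in\Phi_w}\alpha=\rho-w(\rho)$ to write
\[
(ij)\cdot\mu=s_is_j(\mu+\rho)-\rho=s_is_j(\mu)-\alpha_i-s_i(\alpha_j),
\]
so that $-((ij)\cdot\mu)|_{\a}=-s_is_j(\mu)|_{\a}+\alpha_i|_{\a}+s_i(\alpha_j)|_{\a}$. Since $\alpha_i$ and $s_i(\alpha_j)$ are supported, with nonnegative coefficients, on $\{\alpha_i,\alpha_j\}$, adding them back does not change the $\beta_l$-coefficient, which by the previous step is $\le -n_k<0$; hence $-((ij)\cdot\mu)|_{\a}$ has a negative coefficient, as required. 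I expect the main obstacle to be exactly the bookkeeping with the restriction map: one must use that a simple root of $\g^{\C}$ restricts to a single simple restricted root or to $0$ (never to a proper sum) and that this map is onto $\hat\Delta^0$, in order to exhibit $\beta_l$ away from $\{\alpha_i|_{\a},\alpha_j|_{\a}\}$ and to be sure its coefficient in $\mu|_{\a}$ is positive; the rest is a routine Weyl-group calculation.
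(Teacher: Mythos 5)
Your proof is correct and follows essentially the same route as the paper's: expand $s_is_j(\mu)=\mu-\mu^i\alpha_i-\mu^j s_i(\alpha_j)$, use real rank $\ge 3$ to find a simple restricted root $\beta_l$ outside $\{\alpha_i|_{\a},\alpha_j|_{\a}\}$ whose coefficient is untouched and positive, and then note that passing from $s_is_j(\mu)$ to $(ij)\cdot\mu$ only subtracts terms supported on $\{\alpha_i,\alpha_j\}$. You are somewhat more explicit than the paper about the properties of the restriction map $\Delta^0\to\hat\Delta^0\cup\{0\}$ and the $i=j$ edge case, but the argument is the same.
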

\begin{proof}
We have
\[
s_j(\mu)
=\mu-2\frac{\langle\mu,\alpha_j \rangle}{|\alpha_j|^2}\alpha_j
=\mu-\mu^j \alpha_j,
\]
so
\[
s_is_j(\mu)
=s_i(\mu)-\mu^js_i(\alpha_j)
=\mu-\mu^i\alpha_i-\mu^js_i(\alpha_j).
\]
Then for $\beta_k\in \hat{\Delta}^0$ with $\beta_k\neq \alpha_i|_{\a},\alpha_j|_{\a},$ the expression $-s_is_j(\mu)|_{\a}$ must have a negative $\beta_k$ coefficient when expressed in terms of simple restricted roots. But $-s_is_j(\mu)\in \hat{\Delta},$ so $-s_is_j(\mu)\in -\hat{\Delta}^+,$ proving part (a).

For part (b), notice that
\begin{align*}
(ij)\cdot \mu
&=s_is_j(\mu)+s_is_j(\rho)-\rho\\
&=s_is_j(\mu)-\alpha_i-s_i(\alpha_j).\\
\end{align*} 
Therefore $-(ij)\cdot \mu$ has a negative coefficient associated to $\beta_k$ when expressed in terms of simple restricted roots.
\end{proof}
\begin{remark}
Combining two expressions from the above proof,
\begin{equation}\label{eq::RestrictedAffineAction}
((ij)\cdot \mu)|_{\a}
=\mu|_{\a}-(1+\mu^i)\alpha_i|_{\a}-(1+\mu^j)s_i(e^j)|_{\a}.
\end{equation}
\end{remark}

With this understanding of lowest weight vectors in the harmonic curvature module, we can show that they always satisfy the hypotheses of Theorem \ref{thm::IsHarmonicSeed}, which allows us to construct harmonic seeds.
\begin{theorem}\label{thm::SecondOrderCondition}
    Suppose $\g$ is noncomplex simple with real rank at least $3$ and fixed parabolic subalgebra. Let $\Omega\in H_{\R}^2(\g_-,\g)$ be a $\g_0$ lowest weight, so that
    $\Omega\in \g_{\beta,\gamma,\zeta}$ for some restricted roots $\beta,\gamma,\zeta.$ Then $\zeta\in -\hat{\Delta}^+$ and $\zeta \neq -\beta,-\gamma$.
\end{theorem}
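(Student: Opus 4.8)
The plan is to reduce to the complex case via Corollary \ref{cor::LowestWeightIsRestriction} and then exploit the explicit Borel--Weil--Bott description of the lowest weight vector. By Corollary \ref{cor::LowestWeightIsRestriction}, the lowest weight $\Omega$ has weight $-(w\cdot \mu)|_{\a}$ for some $w=(ij)\in W^{\p}_+(2)$, and by Theorem \ref{thm::LowestWeightIsRestriction} we may take $\beta,\gamma,\zeta$ to be the restrictions to $\a$ of $\widetilde\beta=\alpha_i$, $\widetilde\gamma=s_i(\alpha_j)$, and $\widetilde\zeta=-(ij)(\mu)$, where the last equality comes from part (b) of Borel--Weil--Bott applied to the adjoint representation (the harmonic representative has its $\g$-factor equal to $\eta_{-w(\mu)}$). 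So $\zeta=-((ij)(\mu))|_{\a}=-s_is_j(\mu)|_{\a}$.

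The claim $\zeta\in-\hat\Delta^+$ is then exactly part (a) of Proposition \ref{prop::NegativeCoefficient}: since $\g$ has real rank at least $3$, $-s_is_j(\mu)|_{\a}\in-\hat\Delta^+$. For the second assertion, I would argue that $\zeta=-\beta$ or $\zeta=-\gamma$ is impossible by a homogeneity/grading-element computation. The point is that $\Omega\in H^2(\g_-,\g)_+$ means $\Omega$ has positive homogeneity, i.e. $-(w\cdot\mu)(E)>0$ where $E$ is the grading element; equivalently $h_{\I}(\beta)+h_{\I}(\gamma)+h_{\I}(\zeta)=-(w\cdot\mu)(E)<0$ once we account for signs (cochains in $\bigwedge^2\g_-^*\otimes\g$ of positive homogeneity have total $\I$-height of the triple strictly negative — I would state this sign convention carefully). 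If $\zeta=-\beta$, then $\beta+\gamma+\zeta=\gamma$, so the weight of $\Omega$ would be $\gamma\in\hat\Delta^+(\p^+)$, a single positive restricted root, hence with all nonnegative coefficients in the simple restricted roots. But part (b) of Proposition \ref{prop::NegativeCoefficient} says $-((ij)\cdot\mu)|_{\a}$ has a strictly negative coefficient when written in simple restricted roots — contradiction. The case $\zeta=-\gamma$ is identical, yielding weight $\beta$, again with all nonnegative coefficients, again contradicting Proposition \ref{prop::NegativeCoefficient}(b).

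There is one subtlety to handle: Proposition \ref{prop::NegativeCoefficient}(b) as stated produces a negative coefficient on $\beta_k$ for $\beta_k\neq\alpha_i|_{\a},\alpha_j|_{\a}$, which presupposes such a $\beta_k$ exists — this is where real rank $\ge 3$ is used, since the restricted Dynkin diagram then has at least three simple roots, so after removing the (at most two) restricted roots $\alpha_i|_{\a},\alpha_j|_{\a}$ at least one remains. I should check that $\alpha_i$ and $\alpha_j$ are genuinely noncompact (so that they restrict to actual simple restricted roots and the counting works); this holds because $w=(ij)\in W^{\p}_+(2)\subset W^{\p}(2)$ forces $i\in I$ and $j\in N(i)\cup I$ by Proposition \ref{prop::HasseDescription}, and $I$ is compatible with $\g$, so $I\cap\Delta_c=\varnothing$ — but $j$ could a priori be a compact root if $j\in N(i)\setminus I$, so I would need to rule that out or observe that the argument of Proposition \ref{prop::NegativeCoefficient}(b) already accommodates it (it works at the level of restrictions to $\a$ regardless).

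The main obstacle I anticipate is pinning down the sign/homogeneity bookkeeping precisely enough that the step ``$\zeta=-\beta$ forces the weight of $\Omega$ to be $\gamma$, a single positive root with nonnegative coefficients'' is airtight — in particular making sure that $\gamma=s_i(\alpha_j)|_{\a}$ really does lie in $\hat\Delta^+$ (equivalently $s_i(\alpha_j)\in\Delta^+$, which holds since $j\neq i$ implies $s_i$ does not send the positive root $\alpha_j$ to a negative one unless $\alpha_j=\alpha_i$) and has nonnegative simple-restricted-root coefficients, so that the contradiction with Proposition \ref{prop::NegativeCoefficient}(b) is clean. Everything else is a direct citation of the two parts of Proposition \ref{prop::NegativeCoefficient} together with Borel--Weil--Bott.
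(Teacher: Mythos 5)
Your proposal is correct and follows essentially the same route as the paper: reduce to the complex setting via Theorem \ref{thm::LowestWeightIsRestriction} and Borel--Weil--Bott, obtain $\zeta=-s_is_j(\mu)|_{\a}\in-\hat{\Delta}^+$ from Proposition \ref{prop::NegativeCoefficient}(a), and rule out $\zeta=-\beta,-\gamma$ by the same coefficient analysis that underlies Proposition \ref{prop::NegativeCoefficient}(b) --- the paper directly compares the $\beta_k$-coefficient of $s_is_j(\mu)|_{\a}$ (positive) with that of $\alpha_i|_{\a}$ and $s_i(\alpha_j)|_{\a}$ (zero) for $\beta_k\neq\alpha_i|_{\a},\alpha_j|_{\a}$, while you equivalently observe that the total weight $\beta+\gamma+\zeta=-((ij)\cdot\mu)|_{\a}$ would collapse to a single positive restricted root with nonnegative coefficients, contradicting part (b). Two cosmetic points: the homogeneity/grading-element digression (with its sign slip) is not actually used in your contradiction and should be deleted, and since the theorem does not assume positive homogeneity you should work with $W^{\p}(2)$ rather than $W^{\p}_+(2)$, which changes nothing in the argument.
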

\begin{proof}
It follows from Theorem \ref{thm::LowestWeightIsRestriction} that there is a lowest weight vector $\widetilde{\Omega}=(\eta_{\widetilde{\beta}})_{\flat}
\wedge(\eta_{\widetilde{\gamma}})_{\flat} \otimes \eta_{\widetilde{\zeta}}$ such that $\beta,\gamma,\zeta$ are $\widetilde{\beta},\widetilde{\gamma}, \widetilde{\zeta}$ restricted to $\a.$ By Kostant's Borel-Weil-Bott, we can assume without loss of generality that $\widetilde{\beta}=\alpha_i, \widetilde{\gamma}=s_i(\alpha_j),$ and $\widetilde{\zeta}=-s_is_j(\mu)$ for some $(ij)\in W^{\p}(2),$ so $\beta=\alpha_i|_{\a},$ $\gamma=s_i(\alpha_j)|_{\a},$ and $\zeta=-s_is_j(\mu)|_{\a}.$ It follows from Proposition \ref{prop::NegativeCoefficient}(a) that $\gamma\in -\hat{\Delta}^+.$ It remains to show that 
\[
s_is_j(\mu)|_{\a}\neq \alpha_i|_{\a}, s_i(\alpha_j)|_{\a}.
\]
It follows from the proof of Proposition \ref{prop::NegativeCoefficient} that for $\beta_k\neq \alpha_i|_{\a},\alpha_j|_{\a}$, there is a positive coefficient associated to $\beta_k$ in the left hand side. On the other hand, this coefficient is $0$ in the terms on the right hand side.
\end{proof}
The following corollary parallels arguments from \cite{Erickson2}.
\begin{corollary}\label{cor::IsHarmonicSeed}
Suppose $\g$ is is noncomplex simple with real rank at least $3$ and fixed parabolic subalgebra. Let $\Omega\in H^2_{\R}(\g_-,\g)_+$ be a $\g_0$ lowest weight vector. Then $\im(\Omega)\subset \b_-$ and $\Omega$ satisfies the Kruglikov-The property. 
\end{corollary}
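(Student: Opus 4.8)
The plan is to unpack the two assertions separately, using the description of lowest weight vectors from Theorem \ref{thm::LowestWeightIsRestriction} and Theorem \ref{thm::SecondOrderCondition}, and then check the Kruglikov--The conditions (1) and (2) by hand. Write $\Omega\in V_{\beta,\gamma,\zeta}$ with $\beta,\gamma,\zeta$ the restrictions of $\widetilde\beta=\alpha_i$, $\widetilde\gamma=s_i(\alpha_j)$, $\widetilde\zeta=-s_is_j(\mu)$ to $\a$, as in the proof of Theorem \ref{thm::SecondOrderCondition}. For the claim $\im(\Omega)\subset\b_-$: the image of $\Omega$ is spanned by $\g_\zeta$ together with the brackets needed to make $\partial^*\Omega=0$, but more directly, since $\Omega\in V_{\beta,\gamma,\zeta}$ and $\beta,\gamma\in\hat\Delta^+(\p^+)$ (these are homogeneities of the $\g_-^*$ factors, hence positive), while $\zeta\in-\hat\Delta^+$ by Theorem \ref{thm::SecondOrderCondition}, the image of $\Omega\colon\bigwedge^2\g_-\to\g$ lands in $\g_\zeta\subset\g_-$. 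To upgrade ``$\g_-$'' to ``$\b_-$,'' recall $\b_-$ is the nilpotent subalgebra generated by the \emph{negative} restricted root spaces, which is exactly $\bigoplus_{\alpha\in\hat\Delta^-}\g_\alpha=\g_-$; so in fact $\im(\Omega)\subset\g_\zeta\subset\g_-=\b_-$ once we confirm $\zeta$ is a genuine negative restricted root (not $0$), which is part (a) of Proposition \ref{prop::NegativeCoefficient} together with $\zeta\ne0$ (guaranteed because $\mu|_\a\ne0$ and the reflections permute restricted roots up to sign). This also immediately gives the hypothesis of Theorem \ref{thm::IsHarmonicSeed}.

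For the Kruglikov--The property, by Theorem \ref{thm::IsHarmonicSeed} it suffices to have $\im(\Omega)\subset\b_-$, which we have just established — that theorem says precisely that $\Omega\in(\ker\Delta)_+$ with $\im(\Omega)\subset\b_-$ implies $\Omega$ is a harmonic seed, and being a harmonic seed requires the Kruglikov--The property. So the logical structure is: (i) verify $\Omega\in(\ker\Delta)_+$ — this holds because $\Omega\in H^2_\R(\g_-,\g)_+\cong(\ker\Delta)_+$ by the Hodge-theoretic identification of Section \ref{section::HarmonicCurvature}; (ii) verify $\im(\Omega)\subset\b_-$ as above; (iii) invoke Theorem \ref{thm::IsHarmonicSeed}. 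Alternatively, if one wants the Kruglikov--The conditions directly rather than through the seed machinery: condition (1), $\im(\Omega)\subset\g_-\oplus\k_\Omega$, follows a fortiori from $\im(\Omega)\subset\g_-$; and condition (2), $\im(\Omega\wedge1)\subset\ker\Omega$, follows because $\Omega\wedge1$ produces elements of $\bigwedge^3\g_-$ whose $\g_-^*$-slots are already ``used up'' — more carefully, $\Omega$ as a $2$-cochain has its two covector slots dual to $\g_\beta,\g_\gamma$, so feeding in any element of $\im(\Omega)=\g_\zeta$ as a third argument and contracting forces one of the first two arguments to be repeated or to pair trivially, giving zero; this is the same cancellation exploited in \cite{Erickson2}, and I would cite it rather than reprove it.

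The main obstacle is making step (ii) fully rigorous: one must be careful that $\im(\Omega)\subset\g_\zeta$ and not merely $\im(\Omega)\subset\g_-$ in some looser sense, and that $\zeta$ is really a nonzero negative restricted root so that $\g_\zeta\subset\b_-$. The potential subtlety is that $\zeta=-s_is_j(\mu)|_\a$ could a priori restrict to $0$ on $\a$ if $s_is_j(\mu)$ were a compact root — but $\mu$ is the highest root, $s_is_j$ preserves the root system, and $-(ij)\cdot\mu(E)>0$ by the definition of $W^\p_+(2)$, which forces the $\hat I$-height of $\zeta$'s negative to be positive and hence $\zeta\ne0$. Once that is pinned down, everything else is a citation: Theorem \ref{thm::SecondOrderCondition} gives $\zeta\in-\hat\Delta^+$, the identification $\b_-=\g_-$ gives $\g_\zeta\subset\b_-$, and Theorem \ref{thm::IsHarmonicSeed} closes out the Kruglikov--The property. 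I would also remark that the hypothesis ``real rank at least $3$'' enters only through Theorem \ref{thm::SecondOrderCondition} (via Proposition \ref{prop::NegativeCoefficient}), so the corollary is really a packaging of the rank-$\ge3$ input into the form needed downstream for Theorem \ref{thm::EssentialCompact}.
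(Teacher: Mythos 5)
There are two genuine problems here. First, your primary route to the Kruglikov--The property misreads Theorem \ref{thm::IsHarmonicSeed}: in that theorem the Kruglikov--The property is a \emph{hypothesis} alongside $\im(\Omega)\subset\b_-$, not a consequence of it, so you cannot "invoke Theorem \ref{thm::IsHarmonicSeed}" to conclude the property --- that is circular. Second, your fallback direct verification rests on the identification $\b_-=\g_-$, which is false in general: $\g_-=\g_{-k}\oplus\cdots\oplus\g_{-1}$ is the negatively \emph{graded} part (negative $\hat I$-height), whereas $\b_-$ is generated by \emph{all} negative restricted root spaces, including those of height zero lying inside $\g_0$. These coincide only for a minimal parabolic. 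Theorem \ref{thm::SecondOrderCondition} gives $\zeta\in-\hat\Delta^+$, which yields $\g_\zeta\subset\b_-$ directly (this part of your argument survives), but it does \emph{not} give $\g_\zeta\subset\g_-$: the case $\zeta\in\hat\Delta(\g_0)$ is not excluded. So your "a fortiori" derivation of condition (1), $\im(\Omega)\subset\g_-\oplus\k_\Omega$, has a hole exactly in that case. The paper closes it by using the lowest-weight hypothesis a second time: if $\zeta\in\hat\Delta(\g_0)$ then $\im(\Omega)\subset\g_0\cap\b_-$, and since negative $\g_0$ root spaces annihilate a lowest weight vector, $\g_0\cap\b_-\subset\k_\Omega$. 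Note this is also the only place the lowest-weight assumption enters condition (1), so omitting it loses real content.

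Your treatment of condition (2) is also off target. The statement $\im(\Omega\wedge 1)\subset\ker\Omega$ does not follow from slots being "used up" or arguments being "repeated"; it follows because $\im(\Omega)\subset\g_\zeta$ while $\Omega$ pairs nontrivially only against $\g_{-\beta}$ and $\g_{-\gamma}$, and Theorem \ref{thm::SecondOrderCondition} guarantees $\zeta\neq-\beta,-\gamma$, so $\g_\zeta$ meets neither of those spaces. You cite Theorem \ref{thm::SecondOrderCondition} only for $\zeta\in-\hat\Delta^+$ and never deploy the $\zeta\neq-\beta,-\gamma$ clause, which is precisely the clause that condition (2) needs. Your side remarks --- that $\zeta\neq 0$ and that the rank-$\ge 3$ hypothesis enters only through Theorem \ref{thm::SecondOrderCondition} --- are fine, but the corollary as you have argued it is not established.
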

\begin{proof}
By Theorem \ref{thm::SecondOrderCondition}, $\Omega\in \g_{\beta,\gamma,\zeta}$ for $\zeta\in -\hat{\Delta}^+$ and $\zeta\neq -\beta,-\gamma.$ The first condition implies $\mathrm{im}(\Omega)\subset \b_-$. The second condition implies $\im(\Omega\wedge 1)\subset \ker \Omega.$ If $\zeta\in -\hat{\Delta}^+(\p^+)$ then $\mathrm{im}(\Omega)\subset \g_-.$ On the other hand if $\zeta\in \hat{\Delta}(\g_0)$ then $\mathrm{im} (\Omega)\subset \g_0\cap \b_-$, and $\g_0\cap \b_-\subset \mathfrak{k}_{\Omega}$ because $\Omega$ is a $\g_0$ lowest weight vector. Therefore $\mathrm{im}(\Omega)\subset \g_-\oplus \mathfrak{k}_{\Omega}$, and $\Omega$ has the Kruglikov-The property.
\end{proof}
Recall that $E\in \a$ is the grading element. 
\begin{theorem}\label{thm::C0Exists}
Suppose $\tau\in \a^*$ is a restricted weight such that $\tau(E)>0$ and $\tau$ has some negative coefficient when expressed in terms of simple restricted roots. Then there exists $c_0\in \ker \tau$ such that $\nu(c_0)>0$ for all $\nu\in \hat{\Delta}^+(\p^+).$
\end{theorem}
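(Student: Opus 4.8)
The plan is to exploit the dual pairing between restricted roots and the grading filtration, together with the freedom to move in the hyperplane $\ker\tau$. First I would set up notation: write $\tau = \sum_k \tau^k \hat\lambda_k$ in restricted fundamental weights, so that $\tau^k = 2\langle \tau,\beta_k\rangle/|\beta_k|^2$, and simultaneously think of $\nu\in\hat\Delta^+(\p^+)$ as $\nu = \sum_k n_k^\nu \beta_k$ with all $n_k^\nu\ge 0$ and $\sum_{k\in\hat I} n_k^\nu = h_{\hat I}(\nu)\ge 1$. The condition $\nu(c_0)>0$ for all positive-$\p^+$ roots is what we need, and the natural first candidate for a vector making every such pairing positive is the grading element $E$ itself, since $\nu(E) = h_{\hat I}(\nu)\ge 1 > 0$ for every $\nu\in\hat\Delta^+(\p^+)$. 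So $E$ already does the positivity job; the obstruction is only that $E$ need not lie in $\ker\tau$ — indeed $\tau(E)>0$ by hypothesis, so $E\notin\ker\tau$.

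The key idea is therefore to correct $E$ by adding a small multiple of some element $v\in\ker\tau$ chosen so that $E - tv$ (for appropriate sign and small $t>0$, or after rescaling, $c_0 = E - tv$ lands in $\ker\tau$ while preserving positivity of all the pairings $\nu(\cdot)$). Concretely: the hypothesis that $\tau$ has a negative coefficient means $\tau^m < 0$ for some simple restricted root $\beta_m$. I would take $v = \beta_m^\sharp$ (or a suitable dual vector pairing positively with $\beta_m$ and controlled on the others). The point of choosing the index $m$ with $\tau^m<0$ is a sign bookkeeping one: we need $c_0 = E + sv$ with $s$ chosen so that $\tau(c_0) = \tau(E) + s\,\tau(v) = 0$; since $\tau(E)>0$ we need $s\,\tau(v)<0$, and $\tau(\beta_m^\sharp) = \langle\tau,\beta_m\rangle$ has the sign of $\tau^m$, which is negative, so $s>0$ works. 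Then for any $\nu\in\hat\Delta^+(\p^+)$,
\[
\nu(c_0) = \nu(E) + s\,\langle\nu,\beta_m\rangle = h_{\hat I}(\nu) + s\,\langle\nu,\beta_m\rangle.
\]
The first term is $\ge 1$. The inner product $\langle\nu,\beta_m\rangle$ could be negative (if $\nu$ has support far from $\beta_m$ it is $\le 0$), so I must bound $|\langle\nu,\beta_m\rangle|$ uniformly over the finite set $\hat\Delta^+(\p^+)$ and choose $s$ small enough that $s\cdot\max_\nu|\langle\nu,\beta_m\rangle| < 1 \le h_{\hat I}(\nu)$; but $s$ is pinned down by the equation $s = -\tau(E)/\tau(\beta_m^\sharp)$, so there is no room to shrink it. The honest fix is to rescale: replace $c_0$ by $tc_0$ for small $t>0$ — positivity of each $\nu(c_0)$ is scale-invariant for the strict inequality, and $\ker\tau$ is a linear subspace, so this does not help either. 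The real resolution is to instead choose $v$ inside $\ker\tau$ from the start: pick any $v\in\ker\tau$ with $\nu(v)\ge 0$ for all $\nu$ — but that is circular.

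The clean approach, which I expect to be the one that works, is: consider the map $\pi:\a\to\R$, $\pi(x)=\tau(x)$, whose kernel is a hyperplane $H:=\ker\tau$. We want to show $H$ meets the open cone $C := \{x : \nu(x)>0\ \forall\,\nu\in\hat\Delta^+(\p^+)\}$, which is nonempty since $E\in C$ and is open and convex. A hyperplane through the origin fails to meet an open convex cone iff the cone lies entirely in one of the two open half-spaces, i.e. iff $\tau$ (or $-\tau$) is strictly positive on all of $C$. Equivalently, by convex duality (the dual cone of $C$ is the cone generated by $\{\nu : \nu\in\hat\Delta^+(\p^+)\}$, whose closure contains $\mu|_\a$ and all positive-$\p^+$ roots), $H\cap C=\varnothing$ would force $\tau$ or $-\tau$ to be a nonnegative combination of the $\nu$'s, hence to have all nonnegative coefficients in simple restricted roots — contradicting the hypothesis that $\tau$ has a negative coefficient (and noting $\tau(E)>0$ rules out $-\tau$ being the nonnegative combination, since $-\tau(E)<0$ while every $\nu(E)>0$). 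Hence $H\cap C\ne\varnothing$, and any $c_0$ in the intersection works. The main obstacle is making the convex-duality step precise: I need that the dual cone of $C=\{x:\nu(x)>0\}$ is exactly $\operatorname{cone}\{\nu:\nu\in\hat\Delta^+(\p^+)\}$, and that this cone, intersected with the span of simple restricted roots, consists only of vectors with all nonnegative simple-restricted-root coefficients — this is where the structure of $\hat\Delta^+(\p^+)$ as a subset of the positive restricted root lattice enters, and it is essentially the statement that positive roots have nonnegative coordinates. I would phrase the final contradiction directly: if no such $c_0$ existed, Farkas' lemma gives nonnegative $\lambda_\nu$ with $\tau = \sum_\nu \lambda_\nu \nu$ or $-\tau = \sum_\nu \lambda_\nu\nu$; the first has all coefficients $\ge 0$ in simple restricted roots, contradicting the hypothesis; the second gives $-\tau(E)=\sum\lambda_\nu \nu(E)\ge 0$, contradicting $\tau(E)>0$ unless all $\lambda_\nu=0$, i.e. $\tau=0$, again contradicting $\tau(E)>0$.
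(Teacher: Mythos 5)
Your final argument is correct, but it reaches the conclusion by a different mechanism than the paper. The paper works directly and constructively inside the open cone $\mathcal{D}=\{a\in\a:\nu(a)>0 \text{ for all }\nu\in\hat{\Delta}^+(\p^+)\}$: it observes that $E\in\mathcal{D}$ with $\tau(E)>0$, that the negative coefficient hypothesis gives a restricted fundamental weight with $\langle\tau,\hat{\lambda}_k\rangle<0$, that $\hat{\lambda}_k^{\sharp}\in\overline{\mathcal{D}}$ (since positive restricted roots pair nonnegatively with $\hat{\lambda}_k$), and hence by continuity there is $f\in\mathcal{D}$ with $\tau(f)<0$; connectedness of $\mathcal{D}$ then yields a zero of $\tau$ inside $\mathcal{D}$. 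You instead argue by contradiction via Farkas' lemma: if $\ker\tau$ missed the cone, then $\tau$ would lie in the finitely generated (hence closed) cone spanned by $\hat{\Delta}^+(\p^+)$, forcing all simple-restricted-root coefficients of $\tau$ to be nonnegative, contradicting the hypothesis (with $\tau(E)>0$ ruling out the $-\tau$ alternative). Both proofs ultimately rest on the same structural fact --- that elements of $\hat{\Delta}^+(\p^+)$ have nonnegative coordinates in the simple restricted roots, which is exactly the statement that the $\hat{\lambda}_k^{\sharp}$ lie in $\overline{\mathcal{D}}$, dually that the generated cone has nonnegative coordinates --- so the paper's version is essentially the constructive face of your duality argument: it exhibits the explicit witness $\hat{\lambda}_k^{\sharp}$ rather than invoking the separation theorem. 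The duality step you flag as the ``main obstacle'' is not a genuine gap (Farkas for finitely many inequalities plus closedness of finitely generated cones is standard), and you were right to abandon your first attempt of perturbing $E$ by $\beta_m^{\sharp}$, since the correction coefficient there is pinned down and cannot be made small; the paper's choice of $\hat{\lambda}_k^{\sharp}$ rather than $\beta_m^{\sharp}$ is precisely what repairs that idea.
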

\begin{proof} 
Let 
\[
\mathcal{D}=\{a\in \a: \nu(a)>0 \textrm{ for all } \nu\in \hat{\Delta}^+(\p^+)\}.
\]
Then
\[
\overline{\mathcal{D}}=\{a\in \a:\nu(a)\ge 0 \textrm{ for all } \nu\in \hat{\Delta}^+(\p^+).
\]
 We have $E\in \mathcal{D}$ and $\tau(E)>0.$ As an intersection of halfspaces, $\mathcal{D}$ is connected. Therefore it suffices to find $f\in \mathcal{D}$ such that $\tau(f)<0.$ By the condition on negativity of a certain coefficient, there must be some restricted fundamental weight $\hat{\lambda}_k$ for which 
\[
 \tau(\hat{\lambda}_k^{\sharp})
=\langle \tau,\hat{\lambda}_k\rangle
<0.
\]
We have $\hat{\lambda}_k^{\sharp}\in \overline{\mathcal{D}},$ so by continuity of $\tau$ there exists $f\in \mathcal{D}$ for which $\tau(f)<0$.
\end{proof}
Combining Proposition \ref{prop::NegativeCoefficient} with Theorem \ref{thm::C0Exists} gives the following corollary.
\begin{corollary}\label{cor::C0Exists}
Suppose $\g$ is noncomplex of real rank at least $3$, and $\tau$ is a lowest weight of $H^2_{\R}(\g_-,\g)_+.$ Then there exists $c_0\in \ker \tau$ such that $\nu(c_0)>0$ for all $\nu\in \hat{\Delta}^+(\p^+).$
\end{corollary}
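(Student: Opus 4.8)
The plan is to assemble this corollary directly from the two previously-established results it cites, so the proof is really just an exercise in checking that the hypotheses line up. The target is: for $\g$ noncomplex simple of real rank at least $3$ and $\tau$ a lowest weight of $H^2_{\R}(\g_-,\g)_+$, there is $c_0\in\ker\tau$ with $\nu(c_0)>0$ for every $\nu\in\hat\Delta^+(\p^+)$. Theorem \ref{thm::C0Exists} gives exactly this conclusion provided (i) $\tau(E)>0$ and (ii) $\tau$ has a negative coefficient in the basis of simple restricted roots. So the whole proof reduces to verifying (i) and (ii) for a lowest weight of $H^2_{\R}(\g_-,\g)_+$.

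First I would record the shape of $\tau$. By Corollary \ref{cor::LowestWeightIsRestriction}, every lowest weight of $H^2_{\R}(\g_-,\g)_+$ is of the form $\tau=-(w\cdot\mu)|_{\a}$ for some $w\in W^{\p}_+(2)$; since elements of $W^{\p}(2)$ have the form $(ij)$ (Proposition \ref{prop::HasseDescription}), we may write $\tau=-((ij)\cdot\mu)|_{\a}$ with $(ij)\in W^{\p}_+(2)$. For (i): the definition $W^{\p}_+(2)=\{w\in W^{\p}(2): -w\cdot\mu(E)>0\}$ (from the discussion preceding the Borel--Weil--Bott paragraph on $H^2_{\C}(\g_-,\g)_+$) says precisely that $-(w\cdot\mu)(E)>0$; since $E\in\a$, restricting to $\a$ changes nothing, so $\tau(E)=-(w\cdot\mu)|_{\a}(E)=-(w\cdot\mu)(E)>0$. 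That handles (i). For (ii): this is exactly the content of Proposition \ref{prop::NegativeCoefficient}(b), which says that for $\g$ of real rank at least $3$ the weight $-((ij)\cdot\mu)|_{\a}$, written in terms of simple restricted roots, has a negative coefficient — and real rank at least $3$ is in our hypotheses. So (ii) holds too.

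With (i) and (ii) verified, Theorem \ref{thm::C0Exists} applies verbatim to $\tau$ and produces the desired $c_0$. The only mild subtlety worth a sentence in the writeup is that the $H^2_{\C}$ statements are being transported to the real setting: Corollary \ref{cor::LowestWeightIsRestriction} (resting on Theorem \ref{thm::LowestWeightIsRestriction} and the noncomplexity of $\g$) is what licenses writing a real lowest weight as the restriction of $-w\cdot\mu$, and Proposition \ref{prop::NegativeCoefficient} is already phrased for the restriction $|_{\a}$, so no further bridging is needed. I do not anticipate a genuine obstacle here — the statement is explicitly flagged in the excerpt as ``Combining Proposition \ref{prop::NegativeCoefficient} with Theorem \ref{thm::C0Exists}'' — the only thing to be careful about is citing the definition of $W^{\p}_+(2)$ correctly to get $\tau(E)>0$, since that is the one hypothesis of Theorem \ref{thm::C0Exists} not literally spelled out in Proposition \ref{prop::NegativeCoefficient}.

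\begin{proof}
By Corollary \ref{cor::LowestWeightIsRestriction}, a lowest weight $\tau$ of $H^2_{\R}(\g_-,\g)_+$ has the form $\tau=-(w\cdot\mu)|_{\a}$ for some $w\in W^{\p}_+(2)$, and by Proposition \ref{prop::HasseDescription} we may take $w=(ij)$. By definition of $W^{\p}_+(2)$ we have $-(w\cdot\mu)(E)>0$, and since $E\in\a$ this gives $\tau(E)>0$. By Proposition \ref{prop::NegativeCoefficient}(b), since $\g$ has real rank at least $3$, the weight $\tau=-((ij)\cdot\mu)|_{\a}$ has a negative coefficient when expressed in terms of simple restricted roots. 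Thus $\tau$ satisfies the hypotheses of Theorem \ref{thm::C0Exists}, which produces $c_0\in\ker\tau$ with $\nu(c_0)>0$ for all $\nu\in\hat{\Delta}^+(\p^+)$.
\end{proof}
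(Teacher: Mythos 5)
Your proof is correct and follows essentially the same route as the paper: invoke Corollary \ref{cor::LowestWeightIsRestriction} to write $\tau=-((ij)\cdot\mu)|_{\a}$, get the negative coefficient from Proposition \ref{prop::NegativeCoefficient}, note $\tau(E)>0$ (which the paper phrases as positive homogeneity, equivalent to your appeal to the definition of $W^{\p}_+(2)$), and apply Theorem \ref{thm::C0Exists}.
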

\begin{proof}
     By Corollary \ref{cor::LowestWeightIsRestriction}, the weight $\tau=-((ij)\cdot \mu)|_{\a}$ for some $\alpha_i,\alpha_j\in \Delta^0.$ By Proposition \ref{prop::NegativeCoefficient}, $\tau$ has some negative coefficient when expressed in terms of simple restricted roots. Since $\tau$ has positive homogeneity, $\tau(E)>0.$ The conclusion follows from Theorem \ref{thm::C0Exists}.
\end{proof}

\section{Case Analysis}
\subsection{Non-scaling Weights}
This subsection carries out a case analysis of all Yamaguchi nonrigid geometries with compatible real forms, finding lowest weights $\tau$ in the harmonic curvature module whose duals $\tau^{\sharp}\in \a$ are non-scaling elements. The corresponding lowest weight vectors are associated to curvature trees admitting an essential flow.
\begin{remark}\label{rmk::Adjacency}
Recall equation \eqref{eq::RestrictedAffineAction}. For $\beta_k$ a simple restricted root,
\begin{equation}\label{eq::WeylAction}
    \langle ((ij)\cdot \mu)|_{\a},\beta_k \rangle
=\langle \mu|_{\a},\beta_k \rangle
-(1+\mu^i)\langle \alpha_i|_{\a},\beta_k \rangle
-(1+\mu^j)\langle s_i(\alpha_j)|_{\a}, \beta_k\rangle.
\end{equation}
If $\g$ is any noncomplex simple Lie algebra, then $(\mu|_{\a})^k\ge 0$ for all $\beta_k\in \hat{\Delta}^0$. Then if $\beta_k\neq \alpha_i|_{\a},\alpha_j|_{\a}$ and either $\beta_k$ is adjacent to one of these in the restricted Dynkin diagram or $(\mu|_{\a})^k>0,$ then
\[
\langle ((ij)\cdot \mu)|_{\a}, \beta_k\rangle>0.
\]
\end{remark}

In what follows, we will refer to the set $\{\alpha_i|_{\a},\alpha_j|_{\a}\}$ as the image of $(ij)$ under the restriction to $\a,$ and omit $0$ if it appears.

\begin{lemma}\label{lemma::RealNotScaling}
Suppose $\g$ is noncomplex simple with real rank at least $3$ and a parabolic subalgebra $\p\le \g$. If $(\g,\p)$ is Yamaguchi nonrigid, then there exists $w\in W^{\p}_+(2)$ such that $(w\cdot\mu)|_{\a}^{\sharp}$ is not a scaling element.
\end{lemma}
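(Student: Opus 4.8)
The plan is to find, for each Yamaguchi nonrigid $(\g,\p)$ with $\g$ noncomplex simple of real rank $\ge 3$, an element $w = (ij) \in W^{\p}_+(2)$ for which $(w\cdot\mu)|_{\a}^{\sharp}$ fails to be scaling. By the characterization preceding the lemma, $\tau^{\sharp}$ is scaling iff $\hat\Delta(\g_0) = \{\alpha \in \hat\Delta : \langle\alpha,\tau\rangle = 0\}$; equivalently (using that the $\beta_k^{\sharp}$ for $\beta_k \in \hat\Delta^0\setminus\hat I$ span $\g_0^{ss}\cap\a$, together with the grading element argument), $\tau^{\sharp}$ is scaling precisely when $\langle\tau,\beta_k\rangle = 0$ for all $\beta_k \in \hat\Delta^0\setminus\hat I$ \emph{and} $\tau^{\sharp}$ is proportional to $E$ modulo $\g_0^{ss}\cap\a$ --- more usefully, since $E$ itself is scaling, non-scaling of $\tau^{\sharp}$ will follow as soon as I exhibit \emph{some} restricted root $\alpha \in \hat\Delta(\g_0)$ with $\langle\alpha,\tau\rangle \ne 0$, or some $\alpha \notin \hat\Delta(\g_0)$ with $\langle\alpha,\tau\rangle = 0$. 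The cleanest sufficient condition: if there is a simple restricted root $\beta_k \in \hat\Delta^0\setminus\hat I$ with $\langle\tau,\beta_k\rangle \ne 0$, then $\beta_k^{\sharp} \in \g_0^{ss}\cap\a \subset \z(\g_0)^{\perp}$ is a root in $\hat\Delta(\g_0)$ not $\tau$-orthogonal, so $\tau^{\sharp}$ is not scaling (indeed not even in $\z(\g_0)$).

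So the core step is: for $\tau = -((ij)\cdot\mu)|_{\a}$, find an uncrossed simple restricted root $\beta_k$ with $\langle\tau,\beta_k\rangle \ne 0$. By equation \eqref{eq::WeylAction} and Remark \ref{rmk::Adjacency}, if $\g \not\cong \sp(p,l-p)$ with $p < l/2$, then for any uncrossed $\beta_k$ that is \emph{not} in the image of $(ij)$ and is either adjacent to that image or has $(\mu|_{\a})^k > 0$, we get $\langle\tau,\beta_k\rangle > 0$. Since a Yamaguchi nonrigid parabolic has at least one crossed node, and the restricted Dynkin diagram is connected (as $\g$ is simple), for most $(ij)$ such a $\beta_k$ exists: the image of $(ij)$ is a small subset (one or two nodes) of the diagram, and as long as the diagram has an uncrossed node adjacent to or disjoint-with-nonzero-$\mu$-coefficient from the image, we are done. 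The plan is therefore to first dispose of the generic situation with this adjacency/$\mu$-coefficient argument, then handle the residual cases where \emph{every} element of $W^{\p}_+(2)$ has image covering enough of the diagram to block this --- this can only happen for very small-rank restricted diagrams or very specific parabolics, which I would enumerate against Yamaguchi's list (Appendix B) and the Satake tables (Appendix A). The exceptional family $\sp(p,l-p)$, $p<l/2$, where $(\mu|_{\a})^k$ can be negative, needs separate treatment, presumably by directly computing $\tau$ on the relevant restricted diagram (of type $BC_p$ or $C_p$) and checking non-scaling by hand for each admissible Yamaguchi-nonrigid parabolic.

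The main obstacle I anticipate is the bookkeeping in the residual and exceptional cases: ruling out that $\tau^{\sharp}$ accidentally lands on the scaling locus requires knowing $\mu|_{\a}$ explicitly (its expansion in restricted fundamental weights / simple restricted roots) for each real form, and knowing which $(ij) \in W^{\p}(2)$ actually have positive homogeneity, i.e.\ lie in $W^{\p}_+(2)$ --- this is where Proposition \ref{prop::HasseDescription} and the condition $-(ij)\cdot\mu(E) > 0$ must be combined with the Satake data. I expect the real rank $\ge 3$ hypothesis to be essential here exactly as it is in Proposition \ref{prop::NegativeCoefficient}: it guarantees the restricted diagram has enough nodes that, after removing the (at most two) nodes in the image of $(ij)$, something uncrossed and well-placed survives. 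A secondary subtlety is that $\alpha_i, \alpha_j$ may restrict to the same simple restricted root or to $0$ (compact roots), so the "image of $(ij)$" can be smaller than two nodes, which actually helps the argument; I would note this reduces the number of cases rather than creating new ones. Concretely, my proof would proceed: (1) reduce non-scaling to finding an uncrossed $\beta_k$ with $\langle\tau,\beta_k\rangle\ne0$; (2) via \eqref{eq::WeylAction}, give the general adjacency criterion; (3) show it applies whenever the restricted diagram minus the image of some $w\in W^{\p}_+(2)$ retains a suitable uncrossed node, which covers all but finitely many $(\g,\p)$; (4) treat the finite list of exceptions, including all $\sp(p,l-p)$ cases, by explicit computation referencing the appendices.
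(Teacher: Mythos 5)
Your proposal follows essentially the same route as the paper: reduce non-scaling to exhibiting an uncrossed simple restricted root $\beta_k$ with $\langle\tau,\beta_k\rangle\neq 0$ (or, in the residual cases, a root of $\hat{\Delta}^+(\p^+)$ on which $\tau$ vanishes), apply the adjacency/$\mu$-coefficient criterion of Remark \ref{rmk::Adjacency} to dispose of the generic cases, treat $\sp(p,l-p)$ with $p<l/2$ separately, and finish the short list of exceptions by explicit computation against Yamaguchi's list and the Satake tables. The framework and all key ingredients match; what remains is the case-by-case bookkeeping you correctly identify as the bulk of the work.
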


\begin{proof}
Suppose $(w\cdot\mu)|_{\a}^{\sharp}$ is a scaling element. Then
  \[
  \langle \beta_k, (w\cdot \mu)|_{\a} \rangle=
  \beta_k ((w\cdot\mu)|_{\a}^{\sharp})=0
  \]
  for any $\beta_k\in \hat{\Delta}^0\backslash \hat{I}.$ If there exists $(ij)\in W^{\p}_+(2)$ such that $I\subset \{\alpha_i,\alpha_j\}$, then $\hat{I}\subset \{\alpha_i|_{\a},\alpha_j|_{\a}\}$. By connectedness of the restricted Dynkin diagram and the real rank at least $3$ assumption, there must then be some $\beta_k\in \hat{\Delta}^0\backslash\hat{I}$ which is adjacent to $\{\alpha_i|_{\a},\alpha_j|_{\a}\}$ in the restricted Dynkin diagram. By Remark \ref{rmk::Adjacency}, this implies $\langle (w\cdot \mu)|_{\a}, \beta_k \rangle>0$ and so $(w\cdot \mu)|_{\a}^{\sharp}$ is not a scaling element. In particular, if $|I|=1$ then $I\subset \{\alpha_i,\alpha_j\}$ by Proposition \ref{prop::HasseDescription}. These observations handle cases \textbf{A(1,2,3,4,5,6,7,8,11)}, \textbf{B(1,2,3,4,5,7)}, \textbf{C(1,2,3,4,5,7,8)}, \textbf{D(1,2,3,4,6,8)} and all exceptional cases. The remaining cases are \textbf{A(9,10,12,13,14,15,16)}, \textbf{B(6,8)}, \textbf{C(6,9,10)} and \textbf{D(5,7)}. We now subdivide based on the assumption that the Lie algebra $\g$ is split or non-split.
 \begin{enumerate}[(a)]
 \item \textbf{Split cases}:
 
 In split cases, we will omit restrictions from $\h$ to $\h_0,$ identify $\alpha_k$ with $\beta_k,$ $\Delta$ with $\hat{\Delta},$ and $I$ with $\hat{I}.$ In case \textbf{A(9)}, $\alpha_3$ is adjacent to $(21)\in W^{\p}_+(2).$ By Remark \ref{rmk::Adjacency}, this is sufficient to show that $\langle \alpha_3, (21)\cdot \mu \rangle>0.$ Since $\alpha_3$ is not contained in $I=\{\alpha_2,\alpha_i\},$ we get that $((21)\cdot \mu)^{\sharp}$ is not a scaling element. In case \textbf{A(10)}, the root $\alpha_3$ is adjacent to $(21)$ and not contained in $I=\{\alpha_2,\alpha_{l-1}\}.$ In case \textbf{A(12)}, we have $\alpha_l\not \in I=\{\alpha_1,\alpha_2,\alpha_i\}$ and $\mu^l>0$ and $\alpha_l$ is not in $(12).$ By Remark \ref{rmk::Adjacency}, $((12)\cdot \mu)^{\sharp}$ is not a scaling element. In case \textbf{A(14)}, the root $\alpha_2$ is adjacent to $(1l)$ and not contained in $I=\{\alpha_1,\alpha_i,\alpha_l\}.$ In case \textbf{C(6)}, if $l=3$ then $\alpha_1$ is adjacent to $(23)$ and not contained in $I=\{\alpha_2,\alpha_l\}.$ If $l\ge 4,$ then $\alpha_3$ is adjacent to $(21)$ and not contained in $I=\{\alpha_2,\alpha_l\}.$ In case \textbf{D(5)}, the root $\alpha_3$ is adjacent to $(12)$ and not contained in $I=\{\alpha_1,\alpha_l\}.$ In case \textbf{D(7)}, the root $\alpha_3$ is adjacent to $(12)$ and not contained in $I=\{\alpha_1,\alpha_2,\alpha_l\}.$ The remaining cases are \textbf{A(13,15,16)}, \textbf{B(6,8)} and \textbf{C(9,10)}.
 
In case \textbf{B(6)}, the root $\mu=\lambda_2$ and 
\[
s_3(\alpha_2)=\alpha_2-c_{32}\alpha_3=\alpha_2+2\alpha_3.
\]
Therefore
\begin{align*}
(32)\cdot \mu
&=\mu-(1+\mu^3)\alpha_3-(1+\mu^2)s_3(\alpha_2)\\
&=\lambda_2-\alpha_3-2(\alpha_2+2\alpha_3)\\
&=\lambda_2-2\alpha_2-5\alpha_3.
\end{align*}
Then
\begin{align*}
\langle \alpha_2, (32)\cdot \mu\rangle
&=\frac{|\alpha_2|^2}{2}-2|\alpha_2|^2-\frac{5}{2}c_{23}|\alpha_2|^2\\
&=|\alpha_2|^2\\
&\neq 0
\end{align*}
and $\alpha_2\in \Delta^0\backslash I.$

We will not deal with the other $6$ cases immediately, but will impose some conditions. In case \textbf{A(13)}, if $l\ge 4$ then $\alpha_3$ is adjacent to $(12)$ and not contained in $\{\alpha_1,\alpha_2,\alpha_l\}.$ Therefore $l=3.$ In case \textbf{A(15)} if $i\neq 3$ then $\alpha_3$ is adjacent to $(21)$ and not contained in $I=\{\alpha_1,\alpha_2,\alpha_i,\alpha_j\}.$ Otherwise, $i=3.$ If $j\neq l$, then $\alpha_l$ is not contained in $I=\{\alpha_1,\alpha_2,\alpha_3,\alpha_j\}$ or $(21)$ and $\mu^l>0$. It remains to consider $j=l,$ the case of the form of the form $A_l/P_{1,2,3,l}$ for $l\ge 5.$ In case \textbf{A(16)} we must have $l=4,$ or else we could pick $\alpha_3$ adjacent to $(21)$ and not contained in $I=\{\alpha_1,\alpha_2,\alpha_{l-1},\alpha_l\}.$ In case \textbf{C(9)}, if $l\ge 4$ we can pick $\alpha_3$ adjacent to $(21)$ and not contained in $I=\{\alpha_1,\alpha_2,\alpha_l\}.$ Therefore $l=3.$ In case \textbf{C(10)}, if $i\neq 3$ then $\alpha_3$ is adjacent to $(21)$ and not contained in $I=\{\alpha_1,\alpha_2,\alpha_i\}$. Otherwise, $i=3$. Thus we are considering the geometries $A_3/P_{1,2,3}$ (\textbf{A(13)}), $A_l/P_{1,2,3,l}$ for $l\ge 5$ (\textbf{A(15)}), $A_4/P_{1,2,3,4}$ (\textbf{A(16)}), $B_3/P_{1,2,3}$ (\textbf{B(8)}), $C_3/P_{1,2,3}$ (\textbf{C(9)}), and $C_l/P_{1,2,3}$ for $l\ge  4$ (\textbf{C(10)}). The cases $A_l/P_{1,2,3,l}$ ($l\ge 5$) and $A_4/P_{1,2,3,4}$ both have $(12)\in W^{\p}_+(2),$ so we can consolidate these two cases as $A_l/P_{1,2,3,l}$ for $l\ge 4$ (\textbf{A(15,16)}) and prove that $((12)\cdot \mu)^{\sharp}$ is not a scaling element. We can also consolidate the $C$ cases as $C_l/P_{1,2,3}$ for $l\ge 3$ (\textbf{C(9,10)}) and prove that $((21)\cdot \mu)^{\sharp}$ is not a scaling element.

If $(w\cdot\mu)^{\sharp}$ is a scaling element, we must have 
\[
\langle w\cdot \mu, \alpha\rangle
=\alpha((w\cdot\mu)^{\sharp})\neq 0
\]
for every $\alpha\in \Delta^+(\p^+).$ In particular, we must have $\langle w\cdot \mu, \alpha_k\rangle\neq 0$ for every $\alpha_k\in I.$ In the case $A_3/P_{1,2,3}$ (\textbf{A(13)}), pick $(21)\in W^{\p}_+(2).$ Then
\begin{align*}
(21)\cdot \mu
&=\mu-(1+\mu^2)\alpha_2-(1+\mu^1)s_2(\alpha_1)\\
&=(\alpha_1+\alpha_2+\alpha_3)-\alpha_2-2(\alpha_1+\alpha_2)\\
&=-\alpha_1-2\alpha_2+\alpha_3
\end{align*}
and
\[
\langle \alpha_1,-\alpha_1-2\alpha_2+\alpha_3\rangle=0.
\]
Since $\alpha_1\in I,$ the value $((21)\cdot\mu)^{\sharp}$ is not a scaling element. For $A_l/P_{1,2,3,l}$ (\textbf{A(15,16)}),
\begin{align*}
(21)\cdot \mu
&=\mu-(1+\mu^2)\alpha_2-(1+\mu^1)(s_2(\alpha_1))\\
&=\lambda_1+\lambda_l-\alpha_2-2(\alpha_1+\alpha_2)\\
&=\lambda_1+\lambda_l-2\alpha_1-3\alpha_2
\end{align*}
and
\[
\langle \alpha_1, \lambda_1+\lambda_l-2\alpha_1-3\alpha_2\rangle\\
=\frac{|\alpha_1|^2}{2}(1-4-3c_{12})\\
=0.
\]
Since $\alpha_1\in I,$ the value $((21)\cdot \mu)^{\sharp}$ is not a scaling element. For $B_3/P_{1,2,3}$ (\textbf{B(8)}),
\begin{align*}
(32)\cdot \mu
&=\mu-(1+\mu^3)\alpha_3-(1+\mu^2)s_3(\alpha_2)\\
&=(\alpha_1+2\alpha_2+2\alpha_3)-\alpha_3-2(\alpha_2+2\alpha_3)\\
&=\alpha_1-3\alpha_3
\end{align*}
and
\[
\langle \mu, (32)\cdot \mu \rangle\\
=\langle \lambda_2, \alpha_1-3\alpha_3 \rangle\\
=0.
\] 
Since $\mu\in \Delta^+(\p^+)$, we have shown $(32\cdot \mu)^{\sharp}$ is not a scaling element.
For $C_l/P_{1,2,3}$ (\textbf{C(9,10)}),
\begin{align*}
(21)\cdot \mu
&=\mu-(1+\mu^2)\alpha_2-(1+\mu^1)s_2(\alpha_1)\\
&=2\lambda_1-\alpha_2-3(\alpha_1+\alpha_2)\\
&=2\lambda_1-3\alpha_1-4\alpha_2
\end{align*}
and
\[
\langle \alpha_1,2\lambda_1-3\alpha_1-4\alpha_2\rangle\\
=\frac{|\alpha_1|^2}{2}(2-6-4c_{12})\\
=0.
\]
Since $\alpha_1\in I,$ we have shown $((21)\cdot\mu)^{\sharp}$ is not a scaling element.
 \item \textbf{Non-split cases}:

Again, we only have to consider the cases \textbf{A(9,10,12,13,14,15,16)}, \textbf{B(6,8)}, \textbf{C(6,9,10)} and \textbf{D(5,7)}. Similar to the split version, to show that $(w\cdot \mu)|_{\a}^{\sharp}$ is not a scaling element, it suffices to find $\beta_k\in \hat{\Delta}^0\backslash \hat{I}$ that is either adjacent to (and not equal to) $\{\alpha_i|_{\a},\alpha_j|_{\a}\}$, or for which $(\mu|_{\a})^k>0.$

If $\hat{I}\subset \{\alpha_i|_{\a},\alpha_j|_{\a}\}$ for some $(ij)\in W^{\p}_+(2)$, then there must be a restricted root $\beta_k$ adjacent to $\{\alpha_i|_{\a},\alpha_j|_{\a}\}$ and not contained in $\hat{I}$. In particular, this happens whenever $|I|=1,$ because then $I\subset \{\alpha_i,\alpha_j\}$ by Proposition \ref{prop::HasseDescription}. For a given real form, $I$ must be disjoint from the compact roots in the Satake diagram and if $\alpha\in I$ then $\overline{\alpha}\in I$. Analyzing Satake diagrams of non-split real forms (Appendix A) shows \textbf{A(12,15)}, \textbf{B(6,8)}, \textbf{C(9,10)} are not compatible with such real forms, leaving the cases \textbf{A(9,10,13,14,16)}, \textbf{C(6)} and \textbf{D(5,7)}.

Case \textbf{A(9)} can occur with $\sl(m,\H)$. Case \textbf{A(10)} can occur with $\sl(m,\H),$ or $\su(p,l+1-p)$ for $p\le l/2,$ or $\su(p,p).$ We consider these last two cases simultaneously as $\su(p,l+1-p)$ for $p\le \frac{l+1}{2}$. Case \textbf{A(13)} can only occur with $\su(2,2)$, which has real rank $2,$ less than $3$. Case \textbf{A(14)} can occur with $\su(p,p)$ for $p>2$. Case \textbf{A(16)} can occur with $\su(p,l+1-p)$ for $p\le l/2,$ or $\su(p,p)$. We consider these cases simultaneously as $\su(p,l+1-p)$ for $p\le \frac{l+1}{2}.$ Case \textbf{C(6)} can occur with $\sp(p,p)$, and cases $\textbf{D(5,7)}$ can occur with $\so(3,5)$. 

\begin{center}
\includegraphics[scale=.5]{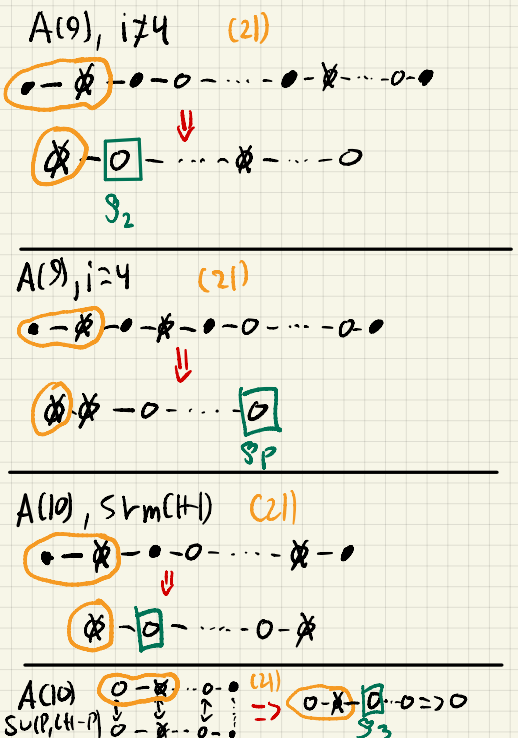}
\end{center}

In case \textbf{A(9)}, the subset $I=\{\alpha_2,\alpha_i\}$. With the real form $\sl(m,\H)$, if $i\neq 4$, then $\hat{I}=\{\beta_1,\beta_j\}$ for $j\neq 2.$ Then $\beta_2$ is adjacent to $\{\beta_1\},$ the image of $(21)$ under restriction, and is not contained in $\hat{I}$. If $i=4,$ then $(\mu|_{\a})^p>0$ and $\beta_p\not\in\hat{I}$ and $\beta_p$ is not in the image of $(21)$. For \textbf{A(10)} with $\sl(m,\H)$ we have $I=\{\alpha_2,\alpha_{l-1}\}$ and $\hat{I}=\{\beta_1,\beta_p\}.$ The image of $(21)$ under restriction is $\{\beta_1\}.$ Since the restricted diagram has at least $3$ simple restricted roots, $\beta_2$ is not contained in $\hat{I}$ and is adjacent to $\beta_1$. For \textbf{A(10)} with $\su(p,l+1-p),$ we have $I=\{\alpha_2,\alpha_{l-1}\}$ and $\hat{I}=\{\beta_2\}.$ Because real rank is at least $3$, the element $\beta_3$ is adjacent to $\{\beta_1,\beta_2\},$ the image of $(21)$ under restriction, and not contained in $\hat{I}.$ For \textbf{A(14)}, we have $I=\{\alpha_1,\alpha_i,\alpha_l\}.$ When considered with the real form $\su(p,p)$ for $p>2,$ we must have $i=p$. Therefore $I=\{\alpha_1,\alpha_p,\alpha_l\}$ and $\hat{I}=\{\beta_1,\beta_p\}.$ Then $\beta_2$ is adjacent to $\{\beta_1\}$, the image of $(1l),$ and not contained in $\hat{I}$. In case \textbf{A(16)} with $\su(p,l+1-p)$ and $p\le \frac{l+1}{2},$ we have $I=\{\alpha_1,\alpha_2,\alpha_{l-1},\alpha_l\}$ and $\hat{I}=\{\beta_1,\beta_2\}.$ Then $\beta_3$ is adjacent to the image of $(21)$ and not contained in $\hat{I}.$ In case \textbf{C(6)} with $\sp(p,p),$ we have $I=\{\alpha_2,\alpha_l\}$ and $\hat{I}=\{\beta_1,\beta_p\}.$ Then $\beta_2$ is adjacent to $\{\beta_1\}$, the image of $(21),$ and is not contained in $\hat{I}.$ In cases $\textbf{D(5,7)}$ with $\so(3,5),$ we rewrite the parabolics using the Dynkin diagram automorphism of $D_4$ switching $\alpha_1$ and $\alpha_3$. Then we have $P_{3,4}$ for $\textbf{D(5)}$ and $P_{2,3,4}$ for \textbf{D(7)}, so that $\alpha_3$ and $\alpha_4$ are related by the bidirectional arrow of the Satake diagram. Then $\hat{I}=\{\beta_3\}$ for \textbf{D(5)} and $\hat{I}=\{\beta_2,\beta_3\}$ for \textbf{D(7)}. Either way, pick $(32)\in W^{\p}_+(2).$ Then $\beta_1$ is adjacent to $\{\beta_2,\beta_3\}$, the image of $(32)$ under restriction, and not contained in $\hat{I}.$

\begin{center}
\includegraphics[scale=.5]{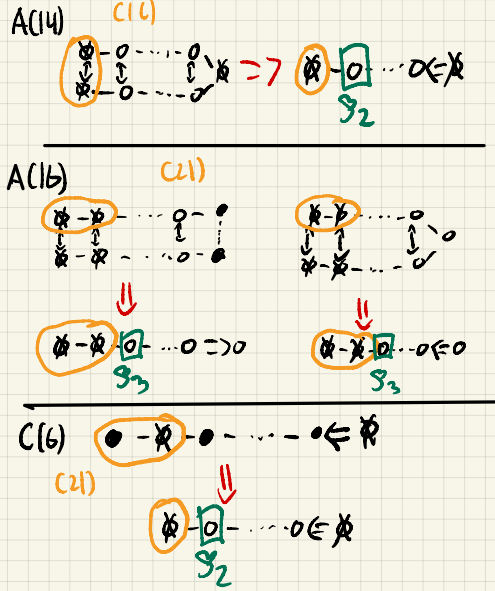}
\end{center}
\end{enumerate}

\end{proof}

\subsection{Existence of $a_0$}
This section shows that the lowest weights from the prior section can be chosen so that an additional condition is satisfied. This condition permits the construction of compact quotients of the associated curvature trees which also admit an essential flow.
\begin{lemma}\label{lemma::a0Exists}
Suppose $\g$ is noncomplex simple of real rank at least $3$ with $\p\le \g$ parabolic. If $(\g,\p)$ is Yamaguchi nonrigid and not isomorphic to $(\sl_4(\H),P_{2,6})$, then there exists $w\in W^{\p}_+(2)$ such that $(w\cdot\mu)|_{\a}^{\sharp}$ is not a scaling element and a constant $a_0:=\alpha^{\sharp}+R\in \ker (w\cdot \mu)|_{\a}\cap \ker \nu_0$ for some $\alpha,\nu_0\in \hat{\Delta}^+(\p^+)$ and $R\in \g_0^{ss}\cap \a.$
\end{lemma}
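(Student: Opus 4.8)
The plan is to combine Lemma~\ref{lemma::RealNotScaling} with the three structural tools from Section~2---Proposition~\ref{prop::InCenter}, Proposition~\ref{prop::dimAtLeast2}, and Lemma~\ref{lemma::twoOrthogonal}---and then handle by hand the few leftover geometries that none of these cover. First I would invoke Lemma~\ref{lemma::RealNotScaling}: for every Yamaguchi nonrigid $(\g,\p)$ with $\g$ noncomplex simple of real rank $\ge 3$, there is some $w=(ij)\in W^{\p}_+(2)$ with $\tau:=(w\cdot\mu)|_{\a}$ not a scaling element, and indeed the proof of that lemma produces an explicit $(ij)$ in each case. Fix such a $w$ and set $\tau$ accordingly. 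The task is now to produce $\alpha,\nu_0\in\hat\Delta^+(\p^+)$ and $R\in\g_0^{ss}\cap\a$ with $\alpha^{\sharp}+R\in\ker\tau\cap\ker\nu_0$.

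Next I would dichotomize on whether $\tau^{\sharp}$ lies in $\z(\g_0)$. If $\tau^{\sharp}\in\z(\g_0)\cap\a$, then since $\tau^{\sharp}$ is not scaling, Proposition~\ref{prop::InCenter} immediately yields the desired $\alpha,\nu_0,R$, and we are done in that branch. (By the remark after that proposition, this branch is exactly the case $\langle\tau,\beta_k\rangle=0$ for all $\beta_k\in\hat\Delta^0\setminus\hat I$.) If instead $\tau^{\sharp}\notin\z(\g_0)$, I would further split on $\dim(\g_0^{ss}\cap\a)=|\hat\Delta^0\setminus\hat I|$. When this dimension is $\ge 2$, Proposition~\ref{prop::dimAtLeast2} gives, for \emph{any} choice of $\alpha\in\hat\Delta^+(\p^+)$ (e.g.\ $\alpha=\mu|_{\a}$), suitable $R$ and $\nu_0$, again finishing. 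So the only geometries not yet handled are those with $\tau^{\sharp}\notin\z(\g_0)$ and $|\hat\Delta^0\setminus\hat I|=1$, i.e.\ the restricted Dynkin diagram has exactly one uncrossed node.

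For that residual list---which, tracking through the cases of Lemma~\ref{lemma::RealNotScaling} with $|\hat I|$ one less than the number of restricted simple roots, is a short explicit collection of geometries with $|\hat I|\ge 2$ (the $P_{1,2,3}$-type and $P_{1,2,3,l}$-type families in types $A,B,C$ and their relevant real forms)---I would apply Lemma~\ref{lemma::twoOrthogonal}. Here the single uncrossed restricted root, call it $\beta_k$, spans $\g_0^{ss}\cap\a$, so the hypothesis $\g_0^{ss}\cap\a\subset\ker\nu_0$ of Lemma~\ref{lemma::twoOrthogonal} just means $\langle\nu_0,\beta_k\rangle=0$. Thus for each residual geometry I must exhibit two restricted roots $\nu_0,\alpha\in\hat\Delta^+(\p^+)$ with $\langle\nu_0,\beta_k\rangle=0$ and $\langle\nu_0,\alpha\rangle=0$; then Lemma~\ref{lemma::twoOrthogonal} supplies $R$ and the constant $a_0=\alpha^{\sharp}+R$. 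Concretely, in type $A$ one can take $\nu_0$ to be the restricted root $\alpha_1|_{\a}+\cdots$ supported away from $\beta_k$ and $\alpha$ a root orthogonal to it; in types $B,C$ similar small linear-algebra checks on the restricted root system (using the table in Appendix~A) produce the pair. I would organize this as a short case table, one or two lines per geometry, verifying the two orthogonality relations and that both $\nu_0,\alpha$ have positive $\hat I$-height.

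The main obstacle is the last step: there is no uniform reason an $\alpha\in\hat\Delta^+(\p^+)$ orthogonal to a given $\nu_0$ (itself orthogonal to the lone $\beta_k$) must exist, and this is precisely why $(\sl_4(\H),P_{2,6})$ is excluded---there the combinatorics genuinely fail for every lowest weight, forcing the separate non-lowest-weight treatment in Section~\ref{subsection::sl4}. So the real content is checking that the residual list (minus $(\sl_4(\H),P_{2,6})$) is exactly the set where a suitable orthogonal pair \emph{does} exist, which I expect to require examining each restricted root system individually; the $A_l/P_{1,2,3,l}$ family with $l$ large and the $C_l/P_{1,2,3}$ family are the ones I would scrutinize most carefully, since there the parabolic is "small" relative to the rank and orthogonal roots of positive height are scarcest.
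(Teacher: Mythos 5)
Your overall strategy is exactly the paper's: invoke Lemma \ref{lemma::RealNotScaling} to get a non-scaling $w$, then run the trichotomy Proposition \ref{prop::InCenter} / Proposition \ref{prop::dimAtLeast2} / Lemma \ref{lemma::twoOrthogonal}, and finish the residual one-uncrossed-node geometries by hand. You have also correctly identified where the real work lies and why $(\sl_4(\H),P_{2,6})$ must be excluded.

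There is, however, one concrete gap: you propose Lemma \ref{lemma::twoOrthogonal} as the \emph{only} tool for the residual list, and it does not cover everything. In case \textbf{A(4)} (split, $l=3$) the paper cannot make Lemma \ref{lemma::twoOrthogonal} work for the $w$ produced by Lemma \ref{lemma::RealNotScaling}; instead it \emph{switches to a different} $w=(23)\in W^{\p}_+(2)$, re-verifies non-scaling from scratch, and observes that for this new choice $((23)\cdot\mu)^{\sharp}\in\z(\g_0)\cap\a$, so Proposition \ref{prop::InCenter} applies after all. Your phrase ``fix such a $w$ and set $\tau$ accordingly'' is too rigid here: the lemma only asserts existence of \emph{some} $w$, and the freedom to change $w$ is genuinely used. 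In cases \textbf{A(7)} (split, $l=3$) and \textbf{A(9)} (non-split, $\sl_4(\H)$ with $P_{2,4}$), no pair $(\nu_0,\alpha)$ satisfying both hypotheses of Lemma \ref{lemma::twoOrthogonal} is exhibited; instead the paper constructs $a_0=\alpha^{\sharp}+R$ directly with an explicit $R$ (e.g.\ $R_{\flat}=\alpha_2$, resp.\ $R_{\flat}=\beta_3$) so that $\alpha^{\sharp}+R\in\ker\nu_0$ even though $\alpha^{\sharp}\notin\ker\nu_0$ --- a strictly weaker requirement than the lemma's $\langle\nu_0,\alpha\rangle=0$. So your residual-case toolkit needs this third, more hands-on mode of argument (or the observation that one may re-choose $w$) to close all cases; as written, your method would stall on A(4), A(7), and A(9).
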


\begin{proof}
    Lemma \ref{lemma::RealNotScaling} shows that there exists $w\in W^{\p}_+(2)$ such that $(w\cdot\mu)|_{\a}^{\sharp}$ is not a scaling element. If $(w\cdot\mu)|_{\a}^{\sharp}\in \z(\g_0)\cap \a,$ there exists a constant $a_0$ satisfying the required properties by Proposition \ref{prop::InCenter}. Therefore we may assume $(w\cdot\mu)|_{\a}^{\sharp}\not\in \z(\g_0).$

    If $\dim(\g_0\cap \a)>1$ then Proposition \ref{prop::dimAtLeast2} exhibits a constant $a_0$ satisfying the required properties. This condition is equivalent to assuming that that at least two vertices in the restricted Dynkin diagram are uncrossed. Going forward, we may assume there is at most one uncrossed vertex. In particular, since the restricted Dynkin diagram has at least $3$ vertices, this is violated if only one vertex in the Satake diagram is crossed. This handles the cases \textbf{A(1,2,3)}, \textbf{B(1,2,3,4)}, \textbf{C(1,2,3,4)} and \textbf{D(1,2,3,4)}, and all but one exceptional case associated to $G_2$. This $G_2$ case is ruled out by the requirement of real rank at least $3.$ This leaves cases \textbf{A(4,5,6,7,8,9,10,11,12,13,14,15,16)}, \textbf{B(5,6,7,8)}, \textbf{C(5,6,7,8,9,10)}, and \textbf{D(5,6,7,8).}

    If we can find a restricted root $\nu_0\in \hat{\Delta}^+(\p^+)$ vanishing on $\g_0^{ss}\cap \a$ and a restricted root $\alpha\in \hat{\Delta}^+(\p^+)$ such that $\langle \nu_0,\alpha\rangle=0$, then Lemma \ref{lemma::twoOrthogonal} exhibits a constant $a_0$ satisfying the required properties. Vanishing of $\nu_0$ on $\g_0^{ss}\cap \a$ is equivalent to the statement that $\nu_0$ is orthogonal to $\beta_k$ for $\beta_k\in \hat{\Delta}^0\backslash \hat{I},$ since the duals of such $\beta_k$ generate $\g_0^{ss}\cap \a.$

    \begin{enumerate}[(a)]
    \item \textbf{Split cases}:
    
    Suppose first that $\g$ is split. We can rule out any case where $l\ge 4$ and two simple roots are crossed in the Satake diagram, because then $\dim (\g_0^{ss}\cap \a)>1.$ This handles cases \textbf{A(5,6,8,9,10,11)}, \textbf{C(7)}, and \textbf{D(5,6,8)}. In case \textbf{A(14)}, the length $l\ge 5$ and so $\dim(\g_0^{ss}\cap \a)>1.$ In cases \textbf{A(12,13,15,16)}, \textbf{B(8)}, \textbf{C(9,10)}, and \textbf{D(7)}, we may choose $\nu_0=\alpha_1,$ and choose $\alpha$ to be the final crossed root. In case $\textbf{B(5)},$ choose $\nu_0=\alpha_1$ and $\alpha=\mu=\lambda_2$. In case \textbf{B(7)}, choose $\nu_0=\mu=\lambda_2$, and choose $\alpha=\alpha_3$. In case \textbf{C(5)}, choose $\nu_0=\mu=2\lambda_1,$ and choose $\alpha=\alpha_l.$ In case \textbf{C(6)}, we must have $l=3$ by dimensional considerations. Then choose $\nu_0=\alpha_3$ and $\alpha=\mu=2\lambda_1$. In case \textbf{C(7)}, choose $\nu_0=\alpha_l$ and $\alpha=\mu=2\lambda_1$. In case \textbf{C(8)}, choose $\nu_0=\mu=2\lambda_1$ and $\alpha=\alpha_2.$ This leaves cases \textbf{A(4,7)} and \textbf{B(6)}. 

    In case \textbf{B(6)}, let $\nu_0=\alpha_2+2\alpha_3$ and let $\alpha=\alpha_1+\alpha_2+\alpha_3.$ We have $\Delta^0\backslash I=\{\alpha_2\}$ and
    \[
    \langle \alpha_2,\nu_0 \rangle 
    =\frac{|c_2|^2}{2} (c_{22}+2c_{23})=0.
    \]
    Using $2|\alpha_3|^2=|\alpha_2|^2,$
    \begin{align*}
    \langle \nu_0,\alpha \rangle
    &=\langle \alpha_2,\alpha_1\rangle
    +|\alpha_2|^2
    +\langle \alpha_2,\alpha_3 \rangle
    +2\langle \alpha_3,\alpha_2 \rangle
    +2|\alpha_3|^2\\
    &=\frac{|\alpha_2|^2}{2}(c_{21}+c_{22}+3c_{23})+2|\alpha_3|^2\\
    &=\frac{|\alpha_2|^2}{2}(-1+2-3)+|\alpha_2|^2\\
    &=0.
    \end{align*}

    In the remaining cases, $\textbf{A(4,7)},$ we must compute explicit lowest weights. In both cases, $l=3$, or else there is more than one uncrossed simple root. In case $\textbf{A(4)},$ consider the element $(23)\in W^{\p}_+(2).$ This is different from the element we considered for this case in the proof of Lemma \ref{lemma::RealNotScaling}, so we will have to show explicitly that it is not a scaling element. We have
    \begin{align*}
    (23)\cdot \mu
    &=\mu-(1+\mu^2)\alpha_2-(1+\mu^3)s_3(\alpha_2)\\
    &=\mu-\alpha_2-2(\alpha_2+\alpha_3)\\
    &=(\alpha_1+\alpha_2+\alpha_3)-3\alpha_2-2\alpha_3\\
    &=\alpha_1-2\alpha_2-\alpha_3.
    \end{align*}
    Using the Cartan matrix to change to a basis of fundamental weights,
    \begin{align*}
    (23)\cdot \mu
    =(2\lambda_1-\lambda_2)
    -2(-\lambda_1+2\lambda_2-\lambda_3)
    -(-\lambda_2+2\lambda_3)
    =4\lambda_1-4\lambda_2.
    \end{align*}
    Then $\alpha_1+\alpha_2\in \Delta^+(\p^+)$ and using $|\alpha_1|=|\alpha_2|,$
    \begin{align*}
   (\alpha_1+\alpha_2)((23)\cdot \mu)^{\sharp}) 
    &=\langle \alpha_1+\alpha_2, (23)\cdot \mu \rangle\\
    &=4\frac{|\alpha_1|^2}{2}-4\frac{|\alpha_2|^2}{2}\\
    &=0,
    \end{align*}
    so $((23)\cdot \mu)^{\sharp}$ is not a scaling element. On the other hand, since $\Delta^0\backslash I=\{\alpha_3\}$ and
    \[
    \alpha_3(((23)\cdot \mu)^{\sharp})=\langle \alpha_3,(23)\cdot \mu \rangle=0,
    \]
we have $((23)\cdot \mu)^{\sharp}\in \z(\g_0)\cap \a.$ There must exist an appropriate choice of $a_0$ by Proposition \ref{prop::InCenter}.

In case \textbf{A(7)}, pick $(12)\in W^{\p}_+(2).$ Then
\begin{align*}
(12)\cdot \mu
&=\mu-(1+\mu^1)\alpha_1-(1+\mu^2)s_1(\alpha_2)\\
&=(\alpha_1+\alpha_2+\alpha_3)-2\alpha_1-(\alpha_1+\alpha_2)\\
&=-2\alpha_1+\alpha_3.
\end{align*}
Pick $\alpha=\alpha_1+\alpha_2+\alpha_3.$ The root $\alpha_2\in \Delta^0\backslash I.$ Pick $R\in \g_0^{ss}\cap \a$ so that $R_{\flat}=\alpha_2.$ Then $(a_0)_{\flat}:=\alpha+R_{\flat}=\alpha_1+2\alpha_2+\alpha_3.$ Pick $\nu_0=\alpha_1.$ Then
\begin{align*}
\langle (a_0)_{\flat}, \nu_0 \rangle
&=\langle \alpha_1, \alpha_1\rangle
+2\langle \alpha_1,\alpha_2 \rangle\\
&=\frac{|\alpha_1|^2}{2}(c_{11}+2c_{12})\\
&=0.
\end{align*}
Using $|\alpha_1|=|\alpha_2|=|\alpha_3|,$
\begin{align*}
\langle (a_0)_{\flat}, (12)\cdot \mu\rangle
&=\langle\alpha_1+2\alpha_2+\alpha_3, -2\alpha_1+\alpha_3 \rangle\\
&=-2|\alpha_1|^2+\frac{|\alpha_2|^2}{2}(-4c_{21}+2c_{23})+|\alpha_3|^2\\
&=\frac{|\alpha_2|^2}{2}(-4+4-2+2)\\
&=0,
\end{align*}
so $a_0\in \ker ((12)\cdot \mu)\cap \ker \nu_0.$

\item \textbf{Non-split cases}:

The cases \textbf{A(4,5,6,8,12,13,15)}, \textbf{B(6,8)}, \textbf{C(5,7,8,9,10)} are not compatible with any non-split real form of real rank at least $3.$ This leaves cases \textbf{A(7,9,10,11,14,16)}, \textbf{B(5,7)}, \textbf{C(6)}, and \textbf{D(5,6,7,8)}. 

In cases \textbf{A(7,11)}, the crossed roots of the Satake diagram are related by the Satake diagram involution and correspond to a single crossed root in the restricted Dynkin diagram. Thus real rank at least $3$ implies there are at least two uncrossed roots in the restricted Dynkin diagram. Case \textbf{D(5)} is only compatible with the real form $\so(l-1,l+1)$ for $l=4$, that is $\so(3,5).$ However, the associated restricted Dynkin diagram has two uncrossed roots. Case $\textbf{A(10)}$ is compatible with all three non-split real forms: $\sl_{p+1}(\H)$, $\su(p,l+1-p)$ for $p\le l/2$ and $\su(p,p).$ In the latter two cases, there is one crossed root in the restricted Dynkin diagram, and at least two uncrossed roots. In the former case, the number of uncrossed simple restricted roots being at most $1$ forces $p=3,$ which corresponds to the real form $\sl_4(\H).$ Since $I=\{\alpha_2,\alpha_6\}$, this case is excluded by hypothesis. The remaining cases are \textbf{A(9,14,16)}, \textbf{B(5,7)}, \textbf{C(6)} and \textbf{D(6,7,8)}.

In case \textbf{A(14)}, the subset $I=\{\alpha_1,\alpha_i,\alpha_l\}$. The real form must be $\su(p,p)$ with $i=p.$ Then $\hat{I}=\{\beta_1,\beta_p\}.$ In fact $p=3,$ or there will be at least two uncrossed roots in the restricted Dynkin diagram. The restricted diagram is of type $C_3$ and $\hat{\Delta}^0\backslash \hat{I}=\{\beta_2\}.$ Pick $\nu_0=\beta_2+\beta_3$ and $\alpha=\mu|_{\a}=2\beta_1+2\beta_2+\beta_3=2\hat{\lambda}_1$. Then
\begin{align*}
\langle \nu_0, \beta_2 \rangle
&=\frac{|\beta_2|^2}{2}(c_{22}+c_{23})\\
&=\frac{|\beta_2|^2}{2}(2-2)\\
&=0
\end{align*}
and
\[
\langle \nu_0,\alpha\rangle=0.
\]
\begin{center}
\includegraphics[scale=.11]{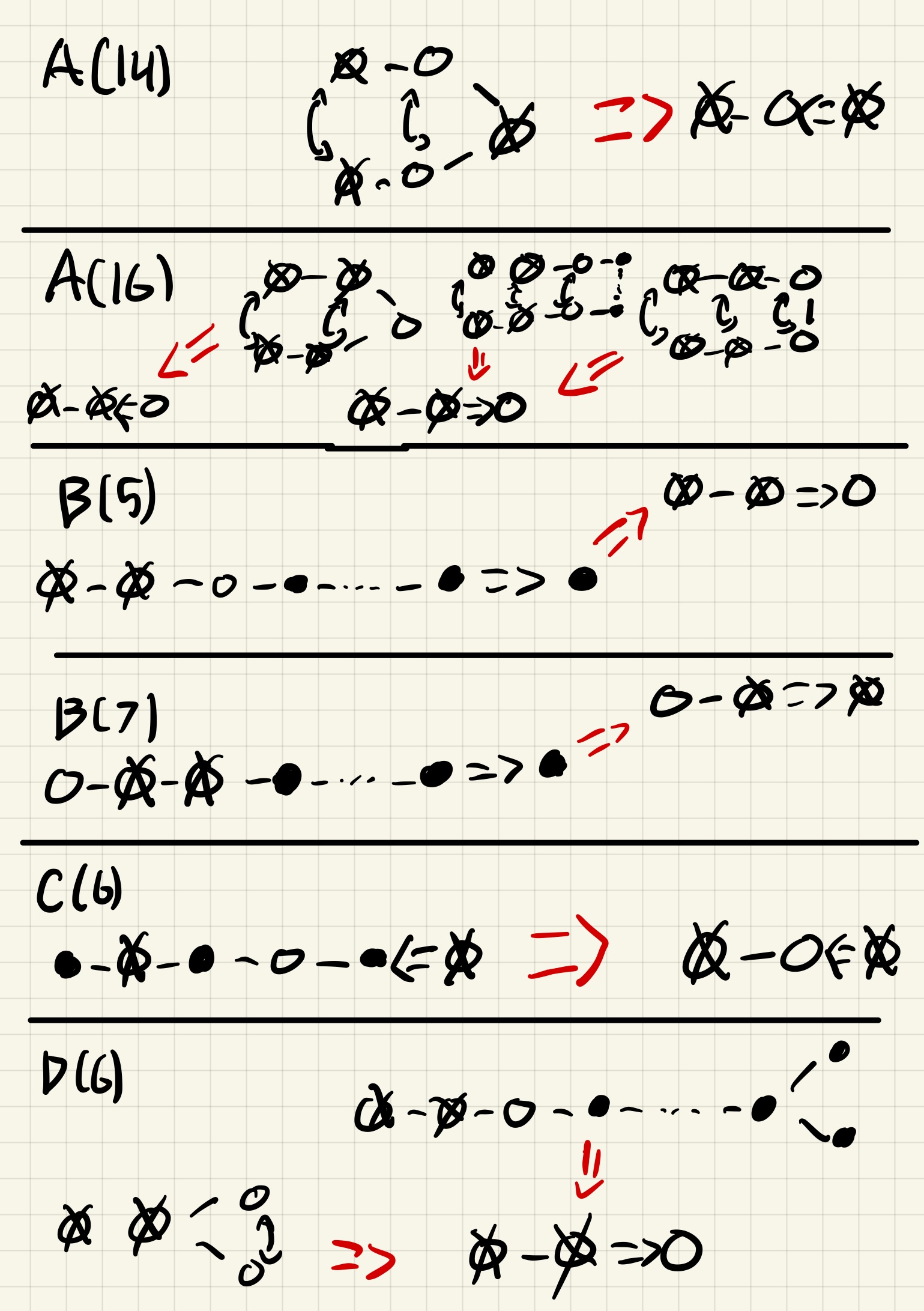}
\end{center}

In case \textbf{A(16)}, we have $I=\{\alpha_1,\alpha_2,\alpha_{l-1},\alpha_l\}.$ The real form must be $\su(p,p)$ or $\su(p,l+1-p)$ for $p\le l/2,$ with $\hat{I}=\{\beta_1,\beta_2\}.$ Since the restricted Dynkin diagram must have at least $3$ nodes with at most $1$ uncrossed, $p=3$ in either case. Then the real form is $\su(3,3)$ or $\su(3,l-2)$ for $l\ge 6.$ In the former case, the restricted diagram has type $C_3.$ In the latter case it has type $B_3.$ Either way, the first two nodes are of the restricted diagram are crossed and $\hat{\Delta}^0\backslash \hat{I}=\{\beta_3\}.$ We can handle both cases simultaneously. The first case has $\mu|_{\a}=2\beta_1+2\beta_2+\beta_3=2\hat{\lambda}_1.$ Similarly, the second case has $\mu|_{\a}=2(\beta_1+\beta_2+\beta_3)=2\hat{\lambda}_1$. Pick $\nu_0=\mu|_{\a}=2\hat{\lambda}_1$ and $\alpha=\beta_2$. Then
\[
\langle \nu_0, \beta_3\rangle=\langle \nu_0,\alpha\rangle=0.
\]
In case \textbf{B(5)}, the subset $I=\{\alpha_1,\alpha_2\}$ and the real form must be $\so(p,2l+1-p)$. Then $\hat{I}=\{\beta_1,\beta_2\}$ and $p=3,$ or at least $2$ nodes in the restricted diagram will be uncrossed. Therefore $\hat{\Delta}^0\backslash \hat{I}=\{\beta_3\}.$ The restricted diagram has type $B_3.$ Pick $\nu_0=\mu|_{\a}=\beta_1+2\beta_2+2\beta_3=\hat{\lambda}_2$ and $\alpha=\beta_1.$ Then
\[
\langle \nu_0, \beta_3\rangle=\langle \nu_0,\alpha\rangle=0.
\]
In case $\textbf{B(7)},$ we have $I=\{\alpha_2,\alpha_3\}$ and the real form must be $\so(p,2l+1-p)$. We have $p=3$, or at least two nodes in the restricted diagram will be uncrossed. The restricted diagram has type $B_3,$ and $\hat{I}=\{\beta_2,\beta_3\}.$ Then $\hat{\Delta}^0\backslash \hat{I}=\{\beta_1\}$. Pick $\nu_0=\mu|_{\a}=\beta_1+2\beta_2+2\beta_3=\hat{\lambda}_2$ and $\alpha=\beta_3.$ Then
\[
\langle \nu_0, \beta_1\rangle=\langle \nu_0, \alpha\rangle=0.
\]
In case \textbf{C(6)}, we have $I=\{\alpha_2,\alpha_l\}$ and the real form must be $\sp(p,p).$ Then $p=3$, or else there will be more than one uncrossed root in the restricted diagram. This real form has a restricted diagram of type $C_3,$ and $\hat{I}=\{\beta_1,\beta_3\}.$ Therefore $\hat{\Delta}^0 \backslash \hat{I}=\{\beta_2 \}.$ Pick $\nu_0=\mu|_{\a}=2\beta_1+2\beta_2+\beta_3=2\hat{\lambda}_1$ and $\alpha=\beta_3.$ Then
\[
\langle \nu_0, \beta_2\rangle=\langle \nu_0, \alpha\rangle=0.
\]
In case \textbf{D(6)}, we have $I=\{\alpha_1,\alpha_2\}$ and the real form must either be $\so(3,2l-3)$ for $l\ge 5$ or $\so(3,5).$ We handle both cases simultaneously. The restricted diagrams have type $B_3,$ and $\hat{\Delta}^0\backslash \hat{I}=\{\beta_3\}.$ Pick $\nu_0=\mu|_{\a}=\beta_1+2\beta_2+2\beta_3=\hat{\lambda}_2$ and $\alpha=\beta_1.$ Then
\[
\langle \nu_0, \beta_3\rangle=\langle \nu_0, \alpha \rangle=0.
\]
In case \textbf{D(7)}, the real form is $\so(3,5),$ but we must use a Dynkin diagram automorphism to rewrite $P_{1,2,4}$ as $P_{2,3,4}$ so that $\alpha_3$ and $\alpha_4$ are related by the Satake diagram's bi-directional arrows. We have $\hat{\Delta}^0\backslash \hat{I}=\{\beta_1\}.$ Pick $\nu_0=\mu|_{\a}=\hat{\lambda}_2$ and $\alpha=\beta_3.$ Then
\[
\langle \nu_0, \beta_1\rangle=\langle \nu_0, \alpha\rangle=0.
\]
\begin{center}
\includegraphics[scale=.5]{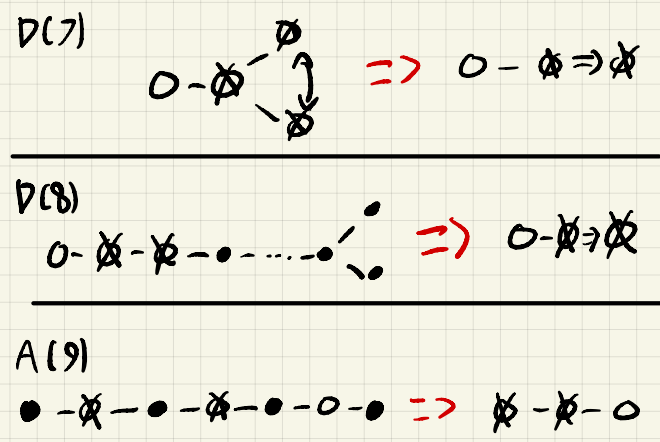}
\end{center}

In case \textbf{D(8)}, the real form must be $\so(p,2l-p)$ for $p<l-1$ or $\so(l-1,l+1)$ for $l\ge 5.$ In the former case, $p=3$, or else there will be more than one uncrossed root. In the latter case, real rank is at least $4$ and so there will be at least $2$ uncrossed roots in the restricted Dynkin diagram. Therefore the real form is $\so(3,2l-3),$ which has a restricted diagram of type $B_3,$ and $\hat{\Delta}^0\backslash \hat{I}=\{\beta_1\}.$ Pick $\nu_0=\mu|_{\a}=\beta_1+2\beta_2+2\beta_3=\hat{\lambda}_2$ and $\alpha=\beta_3.$ Then
\[
\langle \nu_0, \beta_1\rangle=\langle \nu_0, \alpha \rangle=0. 
\]
The only remaining case is \textbf{A(9)}, for which we must compute $(w\cdot \mu)|_{\a}$ for an explicit $w.$ We have $I=\{\alpha_2,\alpha_i\}$ for $i<l-1.$ The real rank must be exactly $3$, or there will be at least $2$ uncrossed roots in the restricted diagram. The real form must be $\sl_4(\H).$ Since $l=7,$ we have $i=4.$ The restricted Dynkin diagram is of type $A_3$ and $\hat{I}=\{\beta_1,\beta_2\},$ so $\hat{\Delta}^0\backslash \hat{I}=\{\beta_3\}.$ For $(21)\in W^{\p}_+(2),$
\begin{align*}
(21)\cdot \mu
&=\mu-(1+\mu^2)\alpha_2-(1+\mu^1)s_2(\alpha_1)\\
&=\mu-\alpha_2-2(\alpha_1+\alpha_2)\\
&=\mu-2\alpha_1-3\alpha_2.
\end{align*}
Then
\[
((21)\cdot \mu)|_{\a}
=(\beta_1+\beta_2+\beta_3)-3\beta_1
=-2\beta_1+\beta_2+\beta_3.
\]
The element $(21)\in W^{\p}_+(2)$ is the only possible choice, and so we already proved this was dual to a non-scaling element in the proof of Lemma \ref{lemma::RealNotScaling}. Let $\alpha=\mu|_{\a}=\beta_1+\beta_2+\beta_3$, and fix $R\in \g_0^{ss}\cap \a$ so that $R_{\flat}=\beta_3.$ Then $(a_0)_{\flat}:=\alpha+R_{\flat}=\beta_1+\beta_2+2\beta_3.$ Let $\nu_0=\beta_1+\beta_2.$ Then using $|\beta_1|=|\beta_2|=|\beta_3|,$
\begin{align*}
\langle (a_0)_{\flat}, ((21)\cdot \mu)|_{\a}\rangle
&=-2|\beta_1|^2+|\beta_2|^2+2|\beta_3|^2
-\langle \beta_1,\beta_2 \rangle
+3\langle \beta_2, \beta_3 \rangle\\
&=\frac{|\beta_2|^2}{2}(2-c_{21}+3{c_{23}})\\
&=0
\end{align*}
and
\begin{align*}
\langle (a_0)_{\flat}, \nu_0\rangle
&=|\beta_1|^2+|\beta_2|^2
+2\langle \beta_1,\beta_2 \rangle
+2\langle \beta_2, \beta_3 \rangle\\
&=\frac{|\beta_2|^2}{2}(4+2c_{21}+2c_{23})\\
&=0,
\end{align*}
so $a_0\in \ker ((21)\cdot \mu)|_{\a}\cap \ker \nu_0.$
\end{enumerate}
\end{proof}

\subsection{The case $(\sl_4(\H),P_{2,6})$}\label{subsection::sl4}
For the case $(\sl_4(\H),P_{2,6})$, we were unable to find an appropriate lowest weight vector and instead settled for a non-lowest weight harmonic seed whose Cartan geometry has a compact quotient admitting essential transformations. We will also write $P_{2,6}$ to represent the corresponding subalgebra of $\sl_8.$ Let $v\in H^2_{\C}((\sl_8)_-,\sl_8)_+$ be a $\g_0$ lowest weight vector associated to $(21)\in W^{\p}_+(2)$ by Kostant's Borel-Weil-Bott. Since
\begin{align*}
(21)(\mu)&
=\mu-\mu^2\alpha_2-\mu_1s_2(\alpha_1)\\
&=\mu-\alpha_1-\alpha_2\\
&=\alpha_3+\alpha_4+\alpha_5+\alpha_6+\alpha_7,
\end{align*}
it is given by
\[
v=(\eta_{\alpha_2})_{\flat}\wedge (\eta_{\alpha_1+\alpha_2})_{\flat}
\otimes \eta_{-\alpha_3-\alpha_4-\alpha_5-\alpha_6-\alpha_7}.
\]
Then
\begin{equation}\label{eqn::ThreeTerms}
\begin{split}
\eta_{\alpha_3+\alpha_4}\cdot v
&=(\eta_{\alpha_2+\alpha_3+\alpha_4})_{\flat}\wedge (\eta_{\alpha_1+\alpha_2})_{\flat}
\otimes \eta_{-\alpha_3-\alpha_4-\alpha_5-\alpha_6-\alpha_7}\\
&+(\eta_{\alpha_2})_{\flat}\wedge (\eta_{\alpha_1+\alpha_2+\alpha_3+\alpha_4})_{\flat}
\otimes \eta_{-\alpha_3-\alpha_4-\alpha_5-\alpha_6-\alpha_7}\\
&+(\eta_{\alpha_2})_{\flat}\wedge (\eta_{\alpha_1+\alpha_2})_{\flat}
\otimes \eta_{-\alpha_5-\alpha_6-\alpha_7}.
\end{split}
\end{equation}
We know
\[
H_{\C}^2((\sl_8)_-,P_{2,6})
\cong H_{\R}(\sl_4(\H)_-,P_{2,6})^{\C},
\]
so that relative to the real form $\bigwedge^2 (\sl_4(\H)_-^*)\otimes \sl_4(\H)\le \bigwedge (\sl_8)_-^*\otimes \sl_8,$ the real and imaginary parts
of $\eta_{\alpha_3+\alpha_4}\cdot v$ are real harmonic elements. Either the real or imaginary part is nonzero, so assume without loss of generality that the real part $\Omega$ is nonzero. Let the real parts of the three terms of equation \eqref{eqn::ThreeTerms} be $\phi_1,\phi_2,\phi_3$, so $\Omega=\phi_1+\phi_2+\phi_3$. Then by Lemma \ref{lemma::LowestIsRestriction}, $\phi_1\in \sl_4(\H)_{\beta_1+\beta_2,\beta_1,-\beta_2-\beta_3}$ and $\phi_2\in \sl_4(\H)_{\beta_1,\beta_1+\beta_2,-\beta_2-\beta_3}$ and $\phi_3\in \sl_4(\H)_{\beta_1,\beta_1,-\beta_3}$ for $\beta_1,\beta_2,\beta_3$ the simple restricted roots of $\sl_4(\H)$. Then $\mathrm{im}(\Omega\wedge 1)\subset \ker \Omega$ and $\mathrm{im}(\Omega)\subset \g_-$, so $\Omega$ has the Kruglikov-The property. Because $\mathrm{im} (\Omega)\le \mathfrak{b_-}$, Theorem \ref{thm::IsHarmonicSeed} implies $\Omega$ is a harmonic seed. The vector $\Omega$ is a restricted weight vector of weight $\tau:=2\beta_1-\beta_3.$ Then $\hat{I}=\{\beta_1,\beta_3\}$ and $\beta_2\in \hat{\Delta}^0\backslash \hat{I}$, so $\beta_2^{\sharp}\in \g_0^{ss}\cap \a$. The restricted diagram has type $A_3.$ From 
\begin{align*}
\langle \beta_2^{\sharp},\tau^{\sharp}\rangle
&=\langle \beta_2,2\beta_1-\beta_3\rangle\\
&=\frac{|\beta_2|^2}{2}(2c_{21}-c_{23})\\
&=\frac{|\beta_2|^2}{2}(-1)\\
&\neq 0,
\end{align*}
it follows that $\tau^{\sharp}
\not\in (\g_0^{ss}\cap \a)^{\perp}
=\mathfrak{z}(\g_0)\cap \a$ and thus $\tau^{\sharp}$ is not a scaling element.

Let $\alpha=\mu|_{\a}=\beta_1+\beta_2+\beta_3\in \hat{\Delta}^+(\p^+)$ and pick $R\in \g_0^{ss}\cap \a$ so that $R_{\flat}=\beta_2.$ Then for $a_0:=\alpha^{\sharp}+R,$
\[
(a_0)_{\flat}=\alpha+R_{\flat}=\beta_1+2\beta_2+\beta_3.
\] 
In terms of restricted fundamental weights,
\[
(a_0)_{\flat}=2\hat{\lambda}_2.
\]
Pick $\nu_0=\beta_1\in \hat{\Delta}^+(\p^+)$.
Then
\[
\langle (a_0)_{\flat},\nu_0 \rangle
=\langle (a_0)_{\flat},\tau\rangle=0,
\]
so $a_0\in \ker \tau \cap \ker \nu_0.$ Observe that $\tau(E)=1>0,$ while the $\beta_3$ coefficient of $\tau$ is negative. By Theorem \ref{thm::C0Exists}, there exists $c_0\in \ker \tau$ such that $\nu(c_0)>0$ for all $\nu\in \hat{\Delta}^+(\p^+).$ Using Theorem \ref{thm::EssentialCompact}, we have proven the following. 

\begin{theorem}\label{thm::sl4H}
Suppose $(G,P)$ is a parabolic model geometry infinitesimally isomorphic to $(\sl_4(\H),P_{2,6}).$ Then there exists a closed, nonflat, locally homogeneous, regular, normal Cartan geometry modeled on $(G,P)$ admitting essential transformations.
\end{theorem}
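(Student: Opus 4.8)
Since $(\sl_4(\H))^{\C} \cong \sl_8$ and $\sl_4(\H)$ is noncomplex, Proposition \ref{prop::ComplexVsRealCohomology}(a) identifies $H^2_{\C}((\sl_8)_-,\sl_8)$ with $H^2_{\R}((\sl_4(\H))_-,\sl_4(\H))^{\C}$ at the level of cochains, so the plan is to locate a real harmonic element which is a harmonic seed of non-scaling weight admitting constants $a_0$ and $c_0$ with the properties demanded by Theorem \ref{thm::EssentialCompact}, and then invoke that theorem. The complication, as noted in the remark preceding Theorem \ref{thm::EssentialCompact}, is that for $(\sl_4(\H),P_{2,6})$ no $\g_0$ lowest weight vector admits a valid $a_0$; so a non-lowest weight seed is needed, and the point is that Erickson's argument for Theorem \ref{thm::EssentialCompact} only requires $\Omega$ to be a harmonic seed.

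First I would take the $\g_0$ lowest weight vector $v \in H^2_{\C}((\sl_8)_-,\sl_8)_+$ attached to $(21) \in W^{\p}_+(2)$ by the Borel--Weil--Bott theorem; since $(21)(\mu) = \alpha_3+\alpha_4+\alpha_5+\alpha_6+\alpha_7$, it is $v = (\eta_{\alpha_2})_\flat \wedge (\eta_{\alpha_1+\alpha_2})_\flat \otimes \eta_{-\alpha_3-\alpha_4-\alpha_5-\alpha_6-\alpha_7}$. Because $\eta_{\alpha_3+\alpha_4} \in \g_0$, the element $\eta_{\alpha_3+\alpha_4}\cdot v$ is again a harmonic form of positive homogeneity, and the Leibniz rule splits it into three terms. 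Relative to the real form $\bigwedge^2((\sl_4(\H))_-^*) \otimes \sl_4(\H) \le \bigwedge^2 (\sl_8)_-^* \otimes \sl_8$ its real and imaginary parts are real harmonic elements, at least one of which is nonzero; call a nonzero one $\Omega = \phi_1 + \phi_2 + \phi_3$, the sum of the real parts of the three terms.

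Next I would verify the Kruglikov--The property. Using Lemma \ref{lemma::LowestIsRestriction} to restrict the three $\g_0^{\C}$-weights to $\a$ — where $\sl_4(\H)$ has restricted diagram of type $A_3$ with simple restricted roots $\beta_1,\beta_2,\beta_3$ and $\hat I = \{\beta_1,\beta_3\}$ — one finds $\phi_1 \in V_{\beta_1+\beta_2,\beta_1,-\beta_2-\beta_3}$, $\phi_2 \in V_{\beta_1,\beta_1+\beta_2,-\beta_2-\beta_3}$, $\phi_3 \in V_{\beta_1,\beta_1,-\beta_3}$. All three target rootspaces lie in $\g_-$ and hence in $\b_-$, and a direct check with these weights gives $\im(\Omega\wedge 1) \subset \ker\Omega$; thus $\Omega$ has the Kruglikov--The property with $\im(\Omega)\subset\b_-$, and Theorem \ref{thm::IsHarmonicSeed} makes it a harmonic seed. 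Its restricted weight is $\tau = 2\beta_1 - \beta_3$; pairing against $\beta_2 \in \hat\Delta^0\setminus\hat I$ with the $A_3$ Cartan matrix gives a nonzero value, so $\tau^\sharp \notin \z(\g_0)$ and in particular is not a scaling element.

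Finally I would produce the constants: take $\alpha = \mu|_\a = \beta_1+\beta_2+\beta_3 \in \hat\Delta^+(\p^+)$ and $R \in \g_0^{ss}\cap\a$ with $R_\flat = \beta_2$, so that $(a_0)_\flat = \alpha + R_\flat = \beta_1 + 2\beta_2 + \beta_3 = 2\hat\lambda_2$; with $\nu_0 = \beta_1 \in \hat\Delta^+(\p^+)$ one checks $\langle (a_0)_\flat, \nu_0\rangle = \langle (a_0)_\flat, \tau\rangle = 0$, so $a_0 \in \ker\tau \cap \ker\nu_0$. Since $\tau(E) = 1 > 0$ and the $\beta_3$-coefficient of $\tau$ is negative, Theorem \ref{thm::C0Exists} furnishes $c_0 \in \ker\tau$ with $\nu(c_0) > 0$ for every $\nu \in \hat\Delta^+(\p^+)$. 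With the harmonic seed $\Omega$ and the constants $a_0, c_0$ in hand, Theorem \ref{thm::EssentialCompact} yields the closed, nonflat, locally homogeneous, regular, normal parabolic geometry modeled on $(G,P)$ admitting essential automorphisms, for any $(G,P)$ of the given infinitesimal type. The main obstacle is the middle step: recognizing that no lowest weight works, guessing that acting by $\eta_{\alpha_3+\alpha_4}$ on $v$ produces an element one homogeneity level up whose image still lands in $\b_-$, and confirming that the real part of this element survives as a nonzero harmonic seed after passing to the real form.
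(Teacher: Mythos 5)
Your proposal is correct and follows essentially the same route as the paper's own proof: the same lowest weight vector $v$ attached to $(21)$, the same act-by-$\eta_{\alpha_3+\alpha_4}$ trick to produce a non-lowest-weight harmonic element, the same identification of the three components' restricted weights, the same $\tau=2\beta_1-\beta_3$, and the same choices of $\alpha$, $R$, $\nu_0$, and $c_0$ before invoking Theorem \ref{thm::EssentialCompact}. No substantive differences to report.
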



\section{Main Theorem}
We will start with an analysis of real Lie algebras admitting a complex structure. Let $\s$ be a split real form, and consider the underlying real Lie algebra of $\s^{\C}$. The restricted roots of $\s$ and $\s^{\C}$ are the same, and the restricted rootspaces of $\s$ complexify to the restricted rootspaces of $\s^{\C}.$ By Proposition \ref{prop::ComplexVsRealCohomology}(b) there is a $\s_0$-equivariant injection
\begin{align}\label{eq::RealEmbeds}
H^2_{\R}(\s_-,\s)\hookrightarrow 
H^2_{\R}(\s_-,\s)^{\C}\cong
H_{\C}^2(\s^{\C}_-,\s^{\C}).
\end{align}
For $\Omega\in H^2_{\R}(\s_-,\s),$ let $\Omega'$ be the corresponding element of $H^2_{\C}(\s_-^{\C},\s^{\C}).$ Let $\b'_-$ be the direct sum of the negative rootspaces of $\s^{\C}.$
\begin{proposition}\label{prop::FirstMap}
\begin{enumerate}[(a)]
\item
If $\Omega$ has the Kruglikov-The property, then $\Omega'$ has the Kruglikov-The property.
\item If $\im(\Omega)\subset \b_-$ then $\im(\Omega')\subset \b'_-.$
\end{enumerate}
\end{proposition}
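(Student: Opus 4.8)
The approach I would take is to recognize Proposition \ref{prop::FirstMap} as a pure base-change statement. Under the identification of Proposition \ref{prop::ComplexVsRealCohomology}(a), the cochain $\Omega'$ is nothing but the $\C$-multilinear extension of $\Omega\colon\bigwedge^2\s_-\to\s$ to a map $\bigwedge^2\s^{\C}_-\to\s^{\C}$, and every object appearing in the Kruglikov-The property — the subspaces $\s_-$, $\b_-$, $\k_\Omega$, and the operations $\ker$, $\im$, and $(\,\cdot\,)\wedge 1$ — is compatible with complexification. So the proof reduces to making these compatibilities explicit and then doing the bookkeeping; nothing deep is involved.

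First I would record the elementary facts needed. (i) If a multilinear map $\Omega$ has $\C$-multilinear extension $\Omega'$, then $\im(\Omega')$ is the $\C$-span of $\im(\Omega)$; in particular $\im(\Omega)\subseteq W$ implies $\im(\Omega')\subseteq W^{\C}$ for any real subspace $W$. This is seen by expanding $\Omega'(X_1+iX_2,\,Y_1+iY_2)$ with $X_j,Y_j\in\s_-$. (ii) $\ker(\Omega')=(\ker\Omega)^{\C}$, since complexification commutes with kernels of linear maps. (iii) Because $\s$ is split, its negative restricted root spaces are exactly its negative root spaces, so $\b_-=\bigoplus_{\alpha\in\Delta^-}\s_\alpha$, whose complexification is $\bigoplus_{\alpha\in\Delta^-}\s^{\C}_\alpha=\b'_-$. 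Part (b) is then immediate: $\im(\Omega)\subseteq\b_-$ gives $\im(\Omega')\subseteq\b_-^{\C}=\b'_-$ by (i) and (iii).

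For part (a), condition (2) of the Kruglikov-The property transfers the same way: the assignment $\Omega\mapsto\Omega\wedge 1$ is given by a fixed $\s_0$-equivariant algebraic formula, identical over $\s$ and over $\s^{\C}$, so $\Omega'\wedge 1$ is the complexification of $\Omega\wedge 1$; hence by the reasoning of (i) and by (ii), $\im(\Omega\wedge 1)\subseteq\ker\Omega$ yields $\im(\Omega'\wedge 1)\subseteq(\ker\Omega)^{\C}=\ker\Omega'$. For condition (1) I additionally need $\k_{\Omega'}=\k_\Omega^{\C}$. This comes from the $\s_0$-equivariance of the identification of Proposition \ref{prop::ComplexVsRealCohomology}(a), extended $\C$-linearly to $\s_0^{\C}=(\s^{\C})_0$: an element $X=X_1+iX_2\in\s_0^{\C}$ annihilates $\Omega'$ exactly when $X_1\cdot\Omega=X_2\cdot\Omega=0$ in $C^2_\R(\s_-,\s)$, i.e. exactly when $X\in\k_\Omega^{\C}$; and $\k_{\Omega'}$, the Lie algebra of $\mathrm{Stab}_{(\s^{\C})_0}(\Omega')$, is precisely this annihilator. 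Applying (i) with $W=\s_-\oplus\k_\Omega$ then gives $\im(\Omega')\subseteq(\s_-\oplus\k_\Omega)^{\C}=\s^{\C}_-\oplus\k_{\Omega'}$, which is condition (1) for $\Omega'$.

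The main obstacle, such as it is, is establishing the compatibilities underlying (i)--(iii) together with the claim that $\Omega\mapsto\Omega\wedge 1$ and the $\s_0$-action transport correctly under the identification; but these are exactly the parenthetical assertions in Proposition \ref{prop::ComplexVsRealCohomology}(a) and in the definition of $\b'_-$ preceding the statement, so once they are invoked the argument is routine. I would also note in passing that $\Omega'$ is harmonic and of positive homogeneity — the grading element is common to $\s$ and $\s^{\C}$ — so that $K_{\Omega'}$ and the Kruglikov-The conditions really are being evaluated in the correct ambient object.
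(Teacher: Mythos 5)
Your proposal is correct and follows essentially the same route as the paper: identify $\Omega'$ as the $\C$-linear extension of $\Omega$, observe that images, kernels, the operation $\Omega\mapsto\Omega\wedge 1$, and the relevant subspaces $\s_-$, $\b_-$, $\k_\Omega$ all behave well under complexification, and conclude. The only (harmless) difference is that you establish the full equality $\k_{\Omega'}=\k_\Omega^{\C}$, whereas the paper proves and uses only the containment $\k_\Omega^{\C}\le\k_{\Omega'}$, which already suffices for condition (1).
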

\begin{proof}
\begin{enumerate}[(a)]
    \item The form $\Omega'$ is the $\C$-linear extension of $\Omega,$ so 
\[
\mathrm{im}(\Omega'\wedge 1)
=\mathrm{im} (\Omega \wedge 1)^{\C}
\subset (\ker \Omega)^{\C}
\subset \ker \Omega'.
\]

 Let $\mathfrak{k}_{\Omega'}\le \s_0^{\C}$ be the stabilizer subalgebra of $\Omega'.$ For $X\in \k_{\Omega}$ and $U,V\in \s_-$, we know that 
 \[
 \ad(X) (\Omega')(U,V)=\ad(X)(\Omega)(U,V)=0.
 \]
 Since $\ad(X)(\Omega')$ is $\C$-bilinear, $\ad(X)(\Omega')=0$. Therefore $\k_{\Omega}\le \k_{\Omega'}$, and in fact $\k_{\Omega}^{\C}\le \k_{\Omega'}$. It follows that 
 \[
 \im (\Omega')=\im(\Omega)^{\C} \le (\s_-\oplus \k_{\Omega})^{\C}\le \s_-^{\C}\oplus \k_{\Omega'}.
 \]
    \item The negative rootspaces of $\s^{\C}$ are the complexifications of the negative restricted rootspaces of $\s.$ Therefore 
 \[
 \im (\Omega')=\im(\Omega)^{\C}\subset \b_-^{\C}=\b'_-.
 \]
\end{enumerate}
\end{proof}
By Proposition \ref{prop::RealCohomologyOfComplex} there is an $\s_0^{\C}$-equivariant injection
\begin{align}\label{eq::ComplexReal}
    H^2_{\C}(\s_-^{\C},\s^{\C})\hookrightarrow H^2_{\R}(\s_-^{\C},\s^{\C}).
\end{align}
For $\Omega'\in H^2_{\C}(\s_-^{\C},\s^{\C}),$ let $\Omega''$ be the corresponding element in $H^2_{\R}(\s^{\C}_-,\s^{\C})$. Let $\b''_-$ be the direct sum of the negative restricted rootspaces of $\s^{\C}.$
\begin{proposition}\label{prop::SecondMap}
\begin{enumerate}[(a)]
\item
    If $\Omega'$ has the Kruglikov-The property, then $\Omega''$ has the Kruglikov-The property.
\item If $\im (\Omega')\le \b'_-$ then $\im(\Omega'')\le \b''_-.$
\end{enumerate}
\end{proposition}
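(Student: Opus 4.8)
The plan is to unwind what the injection \eqref{eq::ComplexReal} of Corollary \ref{cor::RealCohomologyOfComplex} does concretely: it is nothing but restriction of scalars. An element $\Omega'\in H^2_{\C}(\s_-^{\C},\s^{\C})$ is a $\C$-alternating-bilinear map $(\s^{\C})_-\times(\s^{\C})_-\to\s^{\C}$, and $\Omega''$ is the same map regarded as $\R$-alternating-bilinear on $\g_-$ valued in $\g$, where $\g=(\s^{\C})^{\R}$, $\g_-=((\s^{\C})_-)^{\R}$, and $\g_0=(\s_0^{\C})^{\R}$ (the parabolic of $\g$ being the realification of that of $\s^{\C}$), and the $\g_0$-action on $C^2_{\R}(\g_-,\g)$ is the realification of the $\s_0^{\C}$-action on $C^2_{\C}((\s^{\C})_-,\s^{\C})$. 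All of the data entering the Kruglikov-The property — the operation $\Omega\mapsto\Omega\wedge1$, the $\ad$-action, hence images, kernels, and stabilizers — is assembled from the bracket of $\s^{\C}$, which is $\C$-bilinear, so these operations are compatible with restriction of scalars. I would record this paragraph by pointing to Proposition \ref{prop::FirstMap}, which establishes exactly the same compatibilities for extension of scalars; the present situation is its mirror image.

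With that in hand part (a) is immediate. For condition (2): $\Omega''\wedge1$ is the real form of the $\C$-multilinear map $\Omega'\wedge1$, and because $\Omega'$ is $\C$-bilinear the real span of its values is already a complex subspace, so $\im(\Omega''\wedge1)$ and $\ker\Omega''$ coincide, as subsets of $\g$, with $\im(\Omega'\wedge1)$ and $\ker\Omega'$; hence $\im(\Omega''\wedge1)=\im(\Omega'\wedge1)\subset\ker\Omega'=\ker\Omega''$. For condition (1): if $X\in\mathfrak{k}_{\Omega'}\le\s_0^{\C}$ then $\ad(X)\Omega'=0$, and since $\ad(X)$ acts $\C$-linearly, its realification kills $\Omega''$, so $X\in\mathfrak{k}_{\Omega''}$; thus $\mathfrak{k}_{\Omega'}\subseteq\mathfrak{k}_{\Omega''}$ as subsets, and together with $\g_-=((\s^{\C})_-)^{\R}$ this gives $(\s^{\C})_-\oplus\mathfrak{k}_{\Omega'}\subseteq\g_-\oplus\mathfrak{k}_{\Omega''}$ as subsets of $\g$. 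Since $\im(\Omega')\subset(\s^{\C})_-\oplus\mathfrak{k}_{\Omega'}$ by hypothesis, we conclude $\im(\Omega'')\subset\g_-\oplus\mathfrak{k}_{\Omega''}$. (Only this one inclusion of $\mathfrak{k}$'s is needed, exactly as in the proof of Proposition \ref{prop::FirstMap}.)

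For part (b), I would recall from the beginning of this section that the restricted roots of $\g$ are the roots of $\s^{\C}$ and that the negative restricted rootspaces of $\g$ are the realifications of the negative rootspaces of $\s^{\C}$; hence $\b''_-\le\g$ coincides, as a subset, with $(\b'_-)^{\R}$. Since $\im(\Omega'')$ equals $\im(\Omega')$ as a subset, $\im(\Omega')\le\b'_-$ at once yields $\im(\Omega'')\le\b''_-$.

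The main obstacle is the first paragraph: one must be careful in asserting that the concrete description of \eqref{eq::ComplexReal} is restriction of scalars and that the Kruglikov-The operations genuinely descend along it. This is routine — it is the ``$\C$-linear over $\R$'' analogue of the extension-of-scalars verification already carried out in Proposition \ref{prop::FirstMap} — but it is where all the bookkeeping lies; once it is settled, (a) and (b) follow immediately.
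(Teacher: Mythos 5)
Your proposal is correct and follows essentially the same route as the paper: the injection \eqref{eq::ComplexReal} is the inclusion (restriction of scalars), so $\im(\Omega'')=\im(\Omega')$, $\ker\Omega''=\ker\Omega'$, $\k_{\Omega'}\le\k_{\Omega''}$ (the paper even asserts equality, though only the inclusion is needed), and $\b''_-=\b'_-$, from which (a) and (b) follow immediately. The paper's proof is exactly this, stated more tersely.
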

\begin{proof}
\begin{enumerate}[(a)]
    \item The map taking $\Omega'$ to $\Omega''$ is the inclusion, so 
    \[  
    \im(\Omega''\wedge 1)
    =\im (\Omega' \wedge 1)
    \subset \ker \Omega'
    =\ker \Omega''.
    \]
    Let $\k_{\Omega''}\le \s_0^{\C}$ be the stabilizer subalgebra of $\Omega''.$ But $\k_{\Omega''}=\k_{\Omega'}$. Then 
    \[
    \im(\Omega'')
    =\im(\Omega')
    \subset \s_-^{\C}\oplus \k_{\Omega'}
    =\s_-^{\C}\oplus \k_{\Omega''}.
    \]

    \item The negative restricted rootspaces of $\s^{\C}$ are equal to the negative rootspaces of $\s^{\C}.$ Therefore
\[
\im (\Omega'')=\im (\Omega')\subset \b'_-=\b''_-.
\]
\end{enumerate}
\end{proof}
     
\begin{lemma}\label{lemma::LowestWeightNotScaling}
Suppose $\g$ is noncomplex simple of real rank at least $3$ and $(\g,\p)\not\cong (\sl_4(\H),P_{2,6})$. Then there exists a lowest weight $\tau$ of $H^2_{\R}(\g_-,\g)_+$ such that $\tau^{\sharp}$ is not a scaling element, a constant $a_0:=\alpha^{\sharp}+R$ such that $a_0\in \ker \tau \cap \ker \nu_0$ for some $\alpha,\nu_0\in \hat{\Delta}^+(\p^+)$ and $R\in \g_0^{ss}\cap \a$, and a constant $c_0\in \ker \tau$ such that $\nu(c_0)>0$ for all $\nu\in \hat{\Delta}^+(\p^+)$.
\end{lemma}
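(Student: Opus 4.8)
The plan is to read this lemma off the case analysis of the previous two subsections together with the Borel--Weil--Bott consequences of Section~3: all of the substantive work is already contained in Lemma~\ref{lemma::a0Exists} and Corollary~\ref{cor::C0Exists}, so there is no real obstacle. The one point to watch is the sign convention relating the functional $(w\cdot\mu)|_{\a}$ that appears in the case analysis to the actual lowest weights of the harmonic module, and checking that the hypotheses of Lemma~\ref{lemma::a0Exists} are the ones in force.

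First I would invoke Lemma~\ref{lemma::a0Exists} for the geometry $(\g,\p)$, which is Yamaguchi nonrigid and, by hypothesis, not isomorphic to $(\sl_4(\H),P_{2,6})$. This produces an element $w\in W^{\p}_+(2)$ such that $(w\cdot\mu)|_{\a}^{\sharp}$ is not a scaling element, together with restricted roots $\alpha,\nu_0\in\hat{\Delta}^+(\p^+)$ and an element $R\in\g_0^{ss}\cap\a$ with
\[
a_0:=\alpha^{\sharp}+R\in\ker (w\cdot\mu)|_{\a}\cap\ker\nu_0 .
\]

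Next I would set $\tau:=-(w\cdot\mu)|_{\a}$. By Corollary~\ref{cor::LowestWeightIsRestriction}, the lowest weights of $H^2_{\R}(\g_-,\g)_+$ are precisely the functionals $-(w\cdot\mu)|_{\a}$ with $w\in W^{\p}_+(2)$, so $\tau$ is a lowest weight of $H^2_{\R}(\g_-,\g)_+$. Now $\tau$ and $(w\cdot\mu)|_{\a}$ have the same kernel, hence so do $\tau^{\sharp}$ and $(w\cdot\mu)|_{\a}^{\sharp}$ as elements of $\a$; since being a scaling element is a condition on the set $\{\beta\in\hat{\Delta}:\beta(H)=0\}$, which is unchanged when $H$ is replaced by $-H$, it follows that $\tau^{\sharp}$ is not a scaling element. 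The same equality of kernels gives $a_0\in\ker\tau\cap\ker\nu_0$, so the constant $a_0$ has all the required properties.

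Finally, the constant $c_0$ comes straight out of Corollary~\ref{cor::C0Exists}: $\g$ is noncomplex of real rank at least $3$ and $\tau$ is a lowest weight of $H^2_{\R}(\g_-,\g)_+$, so there exists $c_0\in\ker\tau$ with $\nu(c_0)>0$ for all $\nu\in\hat{\Delta}^+(\p^+)$. Assembling $\tau$, $a_0$, and $c_0$ then yields the lemma. The excluded model $(\sl_4(\H),P_{2,6})$ is handled separately in Section~\ref{subsection::sl4}, where a non-lowest-weight harmonic seed carrying the analogous data $a_0$ and $c_0$ is constructed by hand.
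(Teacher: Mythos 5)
Your proposal is correct and follows essentially the same route as the paper: invoke Lemma \ref{lemma::a0Exists} to obtain $w$ and $a_0$, set $\tau=-(w\cdot\mu)|_{\a}$ and identify it as a lowest weight via Corollary \ref{cor::LowestWeightIsRestriction}, then obtain $c_0$ from Corollary \ref{cor::C0Exists}. Your explicit remark that the scaling-element condition and the kernels are unchanged under the sign flip $\tau=-(w\cdot\mu)|_{\a}$ is a point the paper leaves implicit, but it is the same argument.
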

\begin{proof}
Lemma \ref{lemma::a0Exists} shows that there exists $w\in W^{\p}_+(2)$ such that $\tau^{\sharp}:=-(w\cdot\mu)|_{\a}^{\sharp}$ is not a scaling element, and a constant $a_0$ satisfying the required conditions. By Corollary \ref{cor::LowestWeightIsRestriction}, $\tau=-(w\cdot \mu)|_{\a}$ is a lowest weight of $H^2_{\R}(\g_-,\g)_+.$ By Corollary \ref{cor::C0Exists}, there exists a constant $c_0$ satisfying the required conditions.
\end{proof}
    
\begin{lemma}\label{lemma::GoodHarmonicSeeds}
Suppose $\g$ is simple of real rank at least $3$ and $(\g,\p)\not\cong (\sl_4(\H),P_{2,6})$. There exists a harmonic seed $\Omega$ of weight $\tau$ such that $\tau^{\sharp}$ is not a scaling element, a constant $a_0:=\alpha^{\sharp}+R$ such that $a_0\in \ker \tau \cap \ker \nu_0$ for some $\alpha,\nu_0\in \hat{\Delta}^+(\p^+)$ and $R\in \g_0^{ss}\cap \a$, and a constant $c_0\in \ker \tau$ such that $\nu(c_0)>0$ for all $\nu\in \hat{\Delta}^+(\p^+)$.
\end{lemma}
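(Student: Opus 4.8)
The plan is to dichotomize on whether the real simple Lie algebra $\g$ is noncomplex or complex, and in the complex case to descend to the split real form and transport harmonic seeds back up through the injections of Propositions \ref{prop::FirstMap} and \ref{prop::SecondMap}.

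Suppose first that $\g$ is noncomplex. Then Lemma \ref{lemma::LowestWeightNotScaling} already supplies a lowest weight $\tau$ of $H^2_{\R}(\g_-,\g)_+$ with $\tau^{\sharp}$ non-scaling, together with constants $a_0=\alpha^{\sharp}+R$ and $c_0$ of the required form. It only remains to realize $\tau$ by an actual seed: I would choose a $\g_0$ lowest weight vector $\Omega\in H^2_{\R}(\g_-,\g)_+$ of weight $\tau$, note that $\im(\Omega)\subseteq\b_-$ and $\Omega$ satisfies the Kruglikov-The property by Corollary \ref{cor::IsHarmonicSeed}, and conclude from Theorem \ref{thm::IsHarmonicSeed} that $\Omega$ is a harmonic seed. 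This $\Omega$, with the data $\tau,a_0,c_0$, is what is wanted.

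Now suppose $\g$ is complex, say $\g=(\s^{\C})^{\R}$ with $\s$ the split real form of the underlying complex simple Lie algebra. Then $\s$ is noncomplex, has the same real rank as $\g$ (hence at least $3$), and is not isomorphic to $\sl_4(\H)$ (which is not split); let $\p_{\s}\le\s$ and $\p_I\le\s^{\C}$ be the parabolics corresponding to $\p$ under the coincidence of their restricted root systems. One must first know that $(\s,\p_{\s})$ is again Yamaguchi nonrigid, equivalently that $H^2_{\C}(\s^{\C}_-,\s^{\C})_+\neq 0$; I expect this to be the only delicate point, and would verify it against Appendix B, the intuition being that the remaining summands of $H^2_{\R}(\g_-,\g)$ in \eqref{eq::RealCohomology} tend to sit in nonpositive homogeneity. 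Granting this, the noncomplex case applied to $(\s,\p_{\s})$ yields a $\s_0$ lowest weight harmonic seed $\Omega_{\s}\in H^2_{\R}(\s_-,\s)_+$ with $\im(\Omega_{\s})\subseteq\b_-$, of weight $\tau_{\s}$ with $\tau_{\s}^{\sharp}$ non-scaling, together with constants $a_0^{\s}=\alpha^{\sharp}+R_{\s}$ and $c_0^{\s}$.

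Finally I would transport and re-check the data. Applying \eqref{eq::RealEmbeds} and then \eqref{eq::ComplexReal} carries $\Omega_{\s}$ to $\Omega_{\s}'\in H^2_{\C}(\s^{\C}_-,\s^{\C})$ and then to $\Omega_{\s}''\in H^2_{\R}(\g_-,\g)$; by Propositions \ref{prop::FirstMap} and \ref{prop::SecondMap} each retains the Kruglikov-The property and has image in the relevant sum of negative restricted root spaces, which for $\g$ is $\b_-$, and since both injections are Levi-equivariant they preserve homogeneity, so $\Omega_{\s}''\in(\ker\Delta)_+$ and Theorem \ref{thm::IsHarmonicSeed} makes $\Omega_{\s}''$ a harmonic seed modeled on $(\g,\p)$. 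Under the identification $\a=\a_{\s}=\h_0$ forced by the coincidence of the restricted root systems of $\s$, $\s^{\C}$ and $\g$, the restricted weight of $\Omega_{\s}''$ is $\tau:=\tau_{\s}$; since $\hat{\Delta}(\g_0)=\hat{\Delta}(\s_0)$ and the scaling condition is unaffected by rescaling the Killing form, $\tau^{\sharp}$ is non-scaling for $(\g,\p)$, and likewise $\hat{\Delta}^+(\p^+)$ and $\g_0^{ss}\cap\a$ match their $\s$-counterparts with $\ker\tau=\ker\tau_{\s}$, so $c_0:=c_0^{\s}$ works verbatim. For $a_0$ the only wrinkle is that by Proposition \ref{prop::KillingBracketComparison} the Killing forms of $\s$ and $\g$ restrict to proportional forms on $\h_0$ (ratio $2$), so $\alpha^{\sharp}$ computed in $\g$ is half that in $\s$; writing $a_0^{\s}=\alpha^{\sharp_{\s}}+R_{\s}=2\alpha^{\sharp_{\g}}+R_{\s}\in\ker\tau\cap\ker\nu_0$ and halving shows $a_0:=\alpha^{\sharp}+\tfrac12 R_{\s}$ lies in $\ker\tau\cap\ker\nu_0$ and has the required shape. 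Beyond confirming Yamaguchi nonrigidity of $\s$, the argument is an assembly of the preceding results together with routine bookkeeping across the two changes of Killing form.
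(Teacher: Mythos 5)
Your proposal is correct and follows essentially the same route as the paper: the same noncomplex/complex dichotomy, with Lemma \ref{lemma::LowestWeightNotScaling}, Corollary \ref{cor::IsHarmonicSeed} and Theorem \ref{thm::IsHarmonicSeed} handling the noncomplex case, and transport from the split form via \eqref{eq::RealEmbeds}, \eqref{eq::ComplexReal}, Propositions \ref{prop::FirstMap} and \ref{prop::SecondMap}, and the same factor-of-$2$ Killing-form adjustment of $\alpha^{\sharp}$ and $R$ in the complex case. The one point you flag as delicate --- Yamaguchi nonrigidity of $(\s,\p_{\s})$ --- is passed over silently in the paper, which simply applies Lemma \ref{lemma::LowestWeightNotScaling} to the split form as literally stated; your instinct to verify it against Appendix B is reasonable extra care rather than a deviation in method.
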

\begin{proof}
If $\g$ is noncomplex, Lemma \ref{lemma::LowestWeightNotScaling} implies the existence of a $\g_0$ lowest weight vector $\Omega\in H^2_{\R}(\g_-,\g)$ of weight $\tau$ such that $\tau^{\sharp}$ is not a scaling element, and constants $a_0$ and $c_0$ satisfying the required conditions. By Corollary \ref{cor::IsHarmonicSeed}, $\im(\Omega)\subset \b_-$ and $\Omega$ satisfies the Kruglikov-The property. By Theorem \ref{thm::IsHarmonicSeed}, $\Omega$ is a harmonic seed.

If $\g$ is the underlying real Lie algebra of a complex Lie algebra, let $\s$ be a split real form such that $\g\cong \s^{\C}.$ Any parabolic subalgebra of $\g$ is induced by a parabolic subalgebra of $\s.$ Because $\s$ is split, it cannot be isomorphic to $\sl_4(\H).$ By Lemma \ref{lemma::LowestWeightNotScaling}, there exists a lowest $\g_0$ weight vector $\Omega$ of $H^2_{\R}(\s_-,\s)$ of weight $\tau$ such that $\tau^{\sharp}$ is not scaling, and constants $a_0:=\alpha^{\sharp}+R$ and $c_0$ satisfying the required conditions. By Corollary \ref{cor::IsHarmonicSeed}, $\Omega$ satisfies the Kruglikov-The property and $\im(\Omega)\subset \b_-.$ Applying the maps in equations \eqref{eq::RealEmbeds} and \eqref{eq::ComplexReal} to $\Omega$ provides an element $\Omega''\in H^2_{\R}(\g_-,\g)$ of weight $\tau.$ By Proposition \ref{prop::FirstMap} and Proposition \ref{prop::SecondMap}, $\Omega''$ satisfies the Kruglikov-The property and $\im(\Omega'')\subset \b''_-.$ Then by Theorem \ref{thm::IsHarmonicSeed}, $\Omega''$ is a harmonic seed. 

Furthermore, the restricted roots of $\g$ are the same as those of $\s,$ and $\a\cap\s_0^{ss}=\a\cap \g_0^{ss}$. The Killing form induced on $\a$ by $\g$ is twice that induced by $\s.$ Therefore $\sharp_{\g}:\a^*\to \a$ is half of $\sharp:\a^*\to \a.$ Therefore $\tau^{\sharp_{\g}}=\frac{1}{2}\tau^{\sharp}$ is not a scaling element and there exist appropriate constants $a''_0=\alpha^{\sharp_{\g}}+\frac{1}{2}R=\frac{1}{2}a_0$ and $c''_0=c_0$.
\end{proof} 

Finally, we combine our analysis of harmonic seeds in the harmonic curvature module with Theorem \ref{thm::EssentialCompact}.
\begin{MainTheorem}[Main Theorem]
    Suppose $(G,P)$ is a Yamaguchi nonrigid parabolic model geometry with $G$ real simple of real rank at least $3$. Then there exists a closed, nonflat, locally homogeneous, regular, normal Cartan geometry modeled on $(G,P)$ admitting essential transformations.
\end{MainTheorem}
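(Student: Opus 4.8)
The plan is to reduce to the infinitesimal level and then assemble the pieces built up in Sections 2--4, treating the single exceptional model separately.

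First I would pass from $(G,P)$ to its underlying infinitesimal model $(\g,\p)$. Since $G$ is real simple of real rank at least $3$, so is $\g$; and since $(G,P)$ is Yamaguchi nonrigid, $H^2_{\R}(\g_-,\g)_+\neq 0$, so there exist nonflat regular normal geometries modeled on $(G,P)$ at all. Note that $\g$ is allowed to be complex here, i.e. of the form $(\s^{\C})^{\R}$ for a complex simple Lie algebra $\s^{\C}$; this case is subsumed by the lemmas of Section 4.

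Next I would split into two cases. If $(\g,\p)\cong(\sl_4(\H),P_{2,6})$, then Theorem \ref{thm::sl4H} immediately furnishes a closed, nonflat, locally homogeneous, regular, normal Cartan geometry modeled on $(G,P)$ admitting essential transformations, and we are done. Otherwise, Lemma \ref{lemma::GoodHarmonicSeeds} produces a harmonic seed $\Omega$ of some restricted weight $\tau\in\a^*$ whose dual $\tau^{\sharp}\in\a$ is not a scaling element, together with a constant $a_0=\alpha^{\sharp}+R\in\ker\tau\cap\ker\nu_0$ for suitable $\alpha,\nu_0\in\hat{\Delta}^+(\p^+)$ and $R\in\g_0^{ss}\cap\a$, and a constant $c_0\in\ker\tau$ with $\nu(c_0)>0$ for every $\nu\in\hat{\Delta}^+(\p^+)$. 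These are exactly the hypotheses of Theorem \ref{thm::EssentialCompact}, which then yields a one-parameter family of essential automorphisms on a nonflat, locally homogeneous, regular, normal Cartan geometry modeled on $(G,P)$ whose underlying manifold is diffeomorphic to $S^1\times S^{\dim(\g_-)-1}$, in particular closed. This proves the theorem.

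I do not expect a substantive obstacle at this final assembly stage: the genuine content --- constructing a harmonic seed and exhibiting the algebraic constants $a_0$ and $c_0$ for every Yamaguchi nonrigid simple model of real rank at least $3$ --- has already been carried out in the case analysis of Section 3 and packaged into Lemma \ref{lemma::GoodHarmonicSeeds}, while the one stubborn model $(\sl_4(\H),P_{2,6})$, for which no suitable lowest-weight seed exists, was handled by hand in Section \ref{subsection::sl4}. The only point deserving a remark is that Lemma \ref{lemma::GoodHarmonicSeeds} also covers complex $\g$: one works in a split real form $\s$ with $\g\cong(\s^{\C})^{\R}$, builds the seed there, transports it along the $\s_0$-equivariant injections \eqref{eq::RealEmbeds} and \eqref{eq::ComplexReal}, and rescales $a_0$ and $c_0$ to account for the doubling of the Killing form on $\a$. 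Hence the dichotomy above exhausts all cases and the Main Theorem follows.
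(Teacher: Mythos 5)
Your proposal is correct and follows essentially the same route as the paper: dispatch the exceptional model $(\sl_4(\H),P_{2,6})$ via Theorem \ref{thm::sl4H}, and in all other cases feed the harmonic seed and the constants $a_0$, $c_0$ supplied by Lemma \ref{lemma::GoodHarmonicSeeds} into Theorem \ref{thm::EssentialCompact}. Your added remarks on the complex case are precisely the content already packaged inside Lemma \ref{lemma::GoodHarmonicSeeds}, so nothing further is needed.
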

\begin{proof}
    If $(G,P)$ is infinitesimally isomorphic to $(\sl_4(\H),P_{2,6})$ the result is established by Theorem \ref{thm::sl4H}, so we may assume this is not the case. By Lemma \ref{lemma::GoodHarmonicSeeds}, there exists a harmonic seed $\Omega\in H^2_{\R}(\g_-,\g)_+$ of weight $\tau$ for which $\tau^{\sharp}$ is not a scaling element, a constant $a_0:=\alpha^{\sharp}+R\in \ker \tau \cap \ker \nu_0$ for some $\alpha,\nu_0\in \hat{\Delta}^+(\p^+)$ and $R\in \g_0^{ss}\cap \a$, and a constant $c_0\in \ker \tau$ such that $\nu(c_0)>0$ for all $\nu\in \hat{\Delta}^+(\p^+).$ The conclusion follows from Theorem \ref{thm::EssentialCompact}.
\end{proof}

\section*{Appendix A: Real Forms and Restricted Dynkin Diagrams}

The following Satake diagrams come from \cite{CapSlovak}.

\includegraphics[scale=.8]{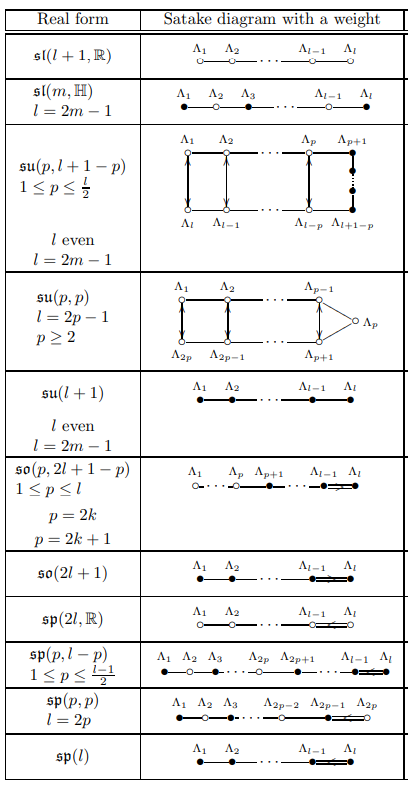}
\includegraphics[scale=.8]{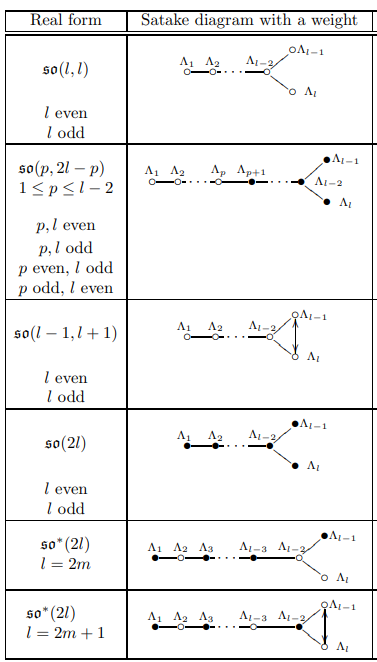}

\includegraphics[scale=.8]{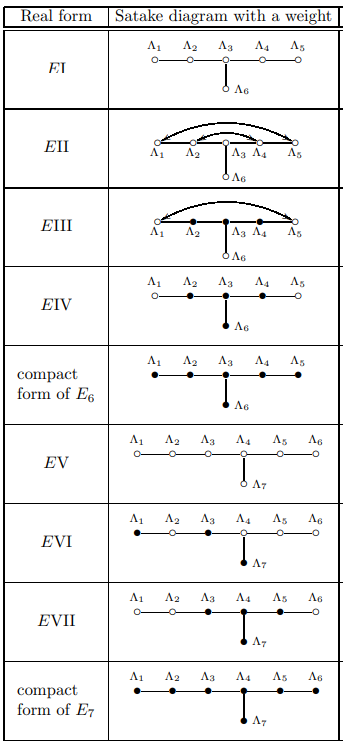}
\includegraphics[scale=.8]{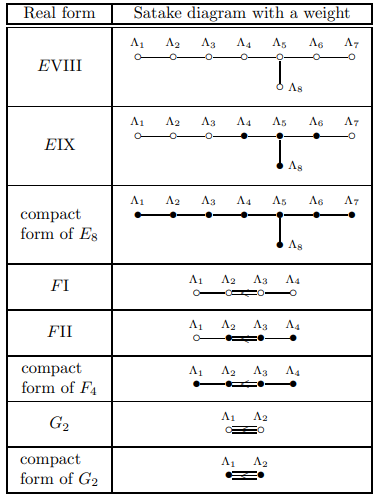}

\textbf{Classical, non-split, noncompact real forms}

The information on the type of the restricted diagrams is from \cite{Satake}.
\begin{center}
\scriptsize
\begin{tabular}{|m{7em} m{4em}  m{4em} m{4em} m{6em}|} 
\hline
  Real Form & Dynkin Diagram & Restricted Diagram & $\mu|_{\a}$& $1\le p$\\
  \hline
    $\sl(p+1,\H)$ & $A_{2p+1}$ & $A_p$&$\hat{\lambda}_1+\hat{\lambda}_p$&\\
    \hline
    $\su(p,l+1-p)$ & $A_l$ & $B_p$& 2$\hat{\lambda}_1$ &$p\le l/2$\\
    \hline
    $\su(p,p)$ & $A_{2p-1}$ & $C_p$& $2\hat{\lambda}_1$&\\
    \hline
    $\so(p,2l+1-p)$& $B_l$ & $B_p$& $\hat{\lambda}_2$ &$3,p \le l$\\
    \hline
    $\sp(p,l-p)$& $C_l$ & $B_p$& $2\hat{\lambda}_1$& $p \le \frac{l-1}{2}$\\
    \hline
    $\sp(p,p)$& $C_{2p}$& $C_p$& $2\hat{\lambda}_1$&\\
    \hline
    $\so(p,2l-p)$& $D_{l}$& $B_p$&$\hat{\lambda}_2$ &$2,p \le l-2$\\
    \hline
    $\so(p,p+2)$& $D_{p+1}$& $B_p$ &$\hat{\lambda}_2$& $3\le p$\\
    \hline
    $\so^*(4p)$& $D_{2p}$&$C_p$& $2\hat{\lambda}_1$& $2\le p$\\
    \hline
    $\so^*(4p+2)$&$D_{2p+1}$& $B_p$& $2\hat{\lambda}_1$& $2\le p$\\
    \hline
\end{tabular}
\\
\end{center}

\normalsize
\section*{Appendix B: Yamaguchi Nonrigid Geometries}
We renumbered the type A geometries so that case \textbf{A(11)}$^\circ$ becomes case \textbf{A(12)} and numbers of later type A cases are increased by one. In case \textbf{A(14)}, we replaced $i\le l/2$ with $i\le \frac{l+1}{2}.$ We made a small correction to case \textbf{A(15)}, so that it doesn't overlap with case \textbf{A(16)}. We follow the numbering convention for Dynkin diagram nodes set in \cite{CapSlovak}. This differs from the numbering used by Yamaguchi in \cite{Yamaguchi1} and \cite{Yamaguchi2} only for the exceptional diagrams.
\\
\scriptsize
\begin{center}
\begin{tabular}{|m{2em} m{4em}  m{3cm} m{2cm}|} 
\hline
  $A_l$ & $P_I$ & $W^{\p}_+(2)$ & $l\ge 2$\\
  \hline
    \textbf{(1)} & $P_1$ & $(12)$ &\\
    \hline
    \textbf{(2)} & $P_2$ & $(21),(23)$ & $l\ge 3$\\
    \hline
    \textbf{(3)}& $P_i$ & $(i\ i-1),(i\ i+1)$ & $2<i \le \frac{l+1}{2}$\\
    \hline
    \textbf{(4)}& $P_{1,2}$ & $(12),(21)$ & $l\neq 3$\\
    & & $(12),(21),(23)$ & $l=3$\\
    \hline
    \textbf{(5)} & $P_{1,i}$ & $(1i)$ & $1<i<l-1$\\
    \hline
   \textbf{(6)}& $P_{1,l-1}$ & $(12),(1\ l-1),(l-1\ l)$ & $l\ge 4$\\
    \hline
    \textbf{(7)}& $P_{1,l}$ & $(12),(l\ l-1),(1l)$ & $l\ge 3$\\
    \hline
    \textbf{(8)}& $P_{2,3}$ & $(21),(23),(32),(34)$ & $l=4$\\
    & & $(21),(23),(32)$ & $l\ge 5$\\
    \hline
    \textbf{(9)}& $P_{2,i}$ & $(21)$ & $3<i<l-1$\\
    \hline
    \textbf{(10)}& $P_{2,l-1}$ & $(21),(l-1 \ l)$ & $l\ge 5$\\
    \hline
    \textbf{(11)}& $P_{i,i+1}$ & $(i\ i+1),(i+1\ i)$ & $2<i \le l/2$\\
    \hline
    \textbf{(12)}& $P_{1,2,i}$ & $(12),(21)$ & $2<i<l$\\
    \hline
    \textbf{(13)}& $P_{1,2,l}$ & $(13),(12),(32),(21),(23)$ & $l=3$\\
    & & $(1l),(12),(21)$ & $l\ge 4$\\
    \hline
    \textbf{(14)}& $P_{1,i,l}$ & $(1l)$ & $2<i\le \frac{l+1}{2}$\\
    \hline
    \textbf{(15)}& $P_{1,2,i,j}$ & $(21)$ & $2<i<j,l-1$\\
    \hline
    \textbf{(16)}& $P_{1,2,l-1,l}$ & $(21),(l-1\ l)$& $l\ge 4$\\ 
    \hline
\end{tabular}
\\

\begin{tabular}{|m{2em} m{4em}  m{3cm} m{2cm}|} 
\hline
  $B_l$ & $P_I$ & $W^{\p}_+(2)$ & $l\ge 3$\\
  \hline
  \textbf{(1)} & $P_1$ & $(12)$&\\
  \hline
  \textbf{(2)} & $P_2$& $(21),(23)$&\\
  \hline
  \textbf{(3)} &$P_3$ &$(32)$ &\\
  \hline
  \textbf{(4)} & $P_l$& $(l \ l-1)$ & $l\ge 4$\\
  \hline
  \textbf{(5)} &$P_{1,2}$ & $(21),(12)$&\\
  \hline
  \textbf{(6)} & $P_{1,3}$& $(32)$ & $l=3$\\
  \hline
  \textbf{(7)} & $P_{2,3}$ & $(32)$ & \\
  \hline
  \textbf{(8)} & $P_{1,2,3}$ & $(32)$& $l=3$\\
  \hline
  \end{tabular}
  \\
  
  \begin{tabular}{|m{2em} m{4em}  m{3cm} m{2cm}|} 
  \hline
 $C_l$ & $P_I$ & $W^{\p}_+(2)$ & $l\ge 2$\\
 \hline
  \textbf{(1)} & $P_l$& $(l \ l-1)$ & \\
  \hline
  \textbf{(2)} &$P_1$ & $(12)$&\\
  \hline
  \textbf{(3)} & $P_2$& $(21),(23)$& $l=3$\\
  & & $(21)$ & $l\ge 4$\\
  \hline
  \textbf{(4)} & $P_{l-1}$ & $(l-1 \ l)$ & $l\ge 4$\\
  \hline
  \textbf{(5)} & $P_{1,l}$& $(12),(21)$& $l=2$\\
  & & $(1l),(12)$& $l\ge 3$\\
  \hline
  \textbf{(6)} & $P_{2,l}$& $(21),(23)$ & $l=3$\\
  & & $(21)$ & $l\ge 4$\\
  \hline
  \textbf{(7)} & $P_{l-1,l}$& $(l-1 \ l)$& $l\ge 4$\\
  \hline
  \textbf{(8)} & $P_{1,2}$& $(12),(21)$ & $l\ge 3$\\
  \hline
  \textbf{(9)} & $P_{1,2,l}$& $(21)$& $l\ge 3$\\
  \hline
  \textbf{(10)} & $P_{1,2,i}$& $(21)$ & $2<i<l$\\
  \hline
\end{tabular}
\\

\begin{tabular}{|m{2em} m{4em}  m{3cm} m{2cm}|}
\hline
  $D_l$ & $P_I$ & $W^{\p}_+(2)$ & $l\ge 4$\\
  \hline
  \textbf{(1)} & $P_1$& $(12)$&\\
  \hline
  \textbf{(2)} & $P_l$ & $(l \ l-2)$& $l\ge 5$\\
  \hline
  \textbf{(3)} & $P_2$& $(21),(23),(24)$ & $l=4$\\
  & & $(21),(23)$ & $l\ge 5$\\
  \hline
  \textbf{(4)} & $P_3$& $(32)$ & $l\ge 5$\\
  \hline
  \textbf{(5)} & $P_{1,l}$& $(12),(42)$& $l=4$\\
  & & $(12)$ & $l\ge 5$\\
  \hline
  \textbf{(6)} & $P_{1,2}$& $(12),(21)$&\\
  \hline
  \textbf{(7)} & $P_{1,2,l}$& $(12),(42)$ & $l=4$\\
  & & $(12)$ & $l\ge 5$\\
  \hline
  \textbf{(8)} & $P_{2,3}$& $(32)$ &$l\ge 5$\\
  \hline
  \end{tabular}
  \\
  
\begin{tabular}{|m{3em} m{4em}  m{1cm}|} 
\hline
Excep. & $P_I$ & $W^{\p}_+(2)$\\
\hline
\textbf{(1)} & $E_6/P_1 $& $(12)$\\
\hline
\textbf{(2)} & $E_6/P_6 $& $(63)$\\
\hline
\textbf{(3)} & $E_7/P_1 $& $(12)$\\
\hline
\textbf{(4)} & $E_7/P_6$& $(65)$\\
\hline
\textbf{(5)} & $E_8/P_7 $&$(76)$ \\
\hline
\textbf{(6)} & $F_4/P_4 $&$(43)$ \\
\hline
\textbf{(7)} & $G_2/P_1 $&$(12)$ \\
\hline
\textbf{(8)} & $G_2/P_2 $&$(21)$ \\
\hline
\textbf{(9)} & $G_2/P_{1,2} $& $(12)$\\
\hline
\end{tabular}
\\
\end{center}

\normalsize

\end{document}